\newtheorem{thm}{Theorem}[section]
\newtheorem{lem}[thm]{Lemma}
\newtheorem{prop}[thm]{Proposition}
\newtheorem{cor}[thm]{Corollary}
\newtheorem{defi}[thm]{Definition}
\newtheorem{rema}[thm]{Remark}
\newtheorem*{thm*}{Theorem}
\newcommand{\bs}{\backslash}
\newcommand{\longto}{\longrightarrow}
\newcommand{\R}{\mathbb{R}}
\newcommand{\C}{\mathbb{C}}
\newcommand{\Z}{\mathbb{Z}}
\newcommand{\N}{\mathbb{N}}
\newcommand{\w}{\omega}
\newcommand{\s}{\sigma}
\newcommand{\pphi}{\varphi}
\newcommand{\taum}{\tau^{-}}
\newcommand{\qhamsp}{(M,\w,\mu:M\to U)}
\newcommand{\qhamquot}{\mu^{-1}(\{1\})/U}
\newcommand{\fibre}{\mu^{-1}(\{1\})}
\newcommand{\Walc}{\mathcal{W}}
\newcommand{\clWalc}{\overline{\mathcal{W}}}
\newcommand{\im}{\mathrm{Im}\, }
\newcommand{\fraku}{\mathfrak{u}}
\newcommand{\frakt}{\mathfrak{t}}
\newcommand{\Mb}{M^{\beta}}
\newcommand{\clWcham}{\overline{\mathfrak{t}^*_+}}
\newcommand{\calV}{\mathcal{V}}
\newcommand{\calU}{\mathcal{U}}
\newcommand{\calO}{\mathcal{O}}
\newcommand{\Int}{\mathrm{Int}}
\newcommand{\MSi}{M_{S^{(i)}}}
\newcommand{\calC}{\mathcal{C}}
\newcommand{\bhat}{\hat{\beta}}
\newcommand{\Mtot}{(U\times U)^g \times\mathcal{C}_1\times\cdots\times \mathcal{C}_l}
\newcommand{\pigl}{\pi_1(\Sigma_g\backslash\{s_1,\, ...\, ,s_l\})}
\newcommand{\prespiintro}{<\alpha_1,\beta_1,\, ...\, ,\alpha_g,\beta_g,\gamma_1,\, ...\, ,\gamma_l\ |\ \prod_{i=1}^{g} [\alpha_i,\beta_i] \prod_{j=1}^{l} \gamma_j =1>}
\newcommand{\abc}{(a_1,b_1,\, ...\, ,a_g,b_g,c_1,\, ...\, ,c_l)}
\newcommand{\relabc}{[a_1,b_1]...[a_g,b_g]c_1...c_l}
\newcommand{\relabcshort}{\prod_{i=1}^g [a_i,b_i] \prod_{j=1}^l c_j}
\newcommand{\pconj}{\mathcal{C}_1\times\cdots\times\mathcal{C}_l}
\newcommand{\HomC}{\mathrm{Hom}_{\mathcal{C}}}
\newcommand{\tU}{\widetilde{U}}
\newcommand{\tmu}{\widetilde{\mu}}
\title{A real convexity theorem for quasi-hamiltonian actions}
\author{Florent Schaffhauser}
\address{Keio University\\ 
Dept. of Mathematics\\
Hiyoshi 3-14-1\\
Kohoku-ku, 223-8522\\
Yokohama, Japon}
\email{florent@math.jussieu.fr}
\thanks{Supported by the Japanese Society for Promotion of Science (JSPS)}
\keywords{momentum maps, quasi-hamiltonian spaces, real convexity theorem}
\subjclass{53D20}
\begin{document}

\begin{abstract}
When $\qhamsp$ is a quasi-hamiltonian $U$-space with $U$ a compact connected and simply connected Lie group, the intersection of $\mu(M)$ with the exponential $\exp(\clWalc)$ of a closed Weyl alcove $\clWalc\subset\fraku=Lie(U)$ is homeomorphic, via the exponential map, to a convex polytope of $\fraku$ (\cite{AMM}). In this paper, we fix an involutive automorphism $\tau$ of $U$ such that the involution $\taum:u\mapsto\tau(u^{-1})$ leaves a maximal torus $T\subset U$ pointwise fixed (such an involutive automorphism always exists on a given compact connected Lie group $U$). We then show (theorem \ref{real_convexity_thm}) that if $\beta$ is a form-reversing involution on $M$ with non-empty fixed-point set $M^{\beta}$, compatible with the action of $(U,\tau)$ and with the momentum map $\mu$, then we have the equality $\mu(M^{\beta})\cap\exp(\clWalc)=\mu(M)\cap\exp(\clWalc)$. In particular, $\mu(M^{\beta})\cap\exp(\clWalc)$ is a convex polytope. This theorem is a quasi-hamiltonian analogue of the O'Shea-Sjamaar theorem (\cite{OSS}) when the symmetric pair $(U,\tau)$ is of maximal rank. As an application of this result, we obtain an example of lagrangian subspace in representation spaces of surfaces groups (theorem \ref{applic2}).
\end{abstract}

\maketitle

\section{Introduction}
In the following, $M$ will always denote a manifold, $U$ a compact Lie group, with Lie algebra $\fraku:=Lie(U)$, acting on $M$, and $T$ a torus with Lie algebra $\frakt:=Lie(T)$. Additionally, if $X$ is a set and $\alpha:X\to X$ is an involution on $X$, we will denote its fixed-point set $Fix(\alpha)$ by $X^{\alpha}$.\\
The convexity properties of the momentum map of a hamiltonian action have been extensively studied since the convexity theorem for hamiltonian torus actions of Atiyah (\cite{Atiyah}) and Guillemin and Sternberg (\cite{GS1}), which says that the image $\mu(M)\subset\frakt^*$ of the momentum map is a convex polyhedron. In \cite{Kirwan}, Kirwan studied the case of hamiltonian actions of non-abelian compact connected Lie groups. There, she proved a conjecture of Guillemin and Sternberg in \cite{GS2} saying that the intersection $\mu(M)\cap\clWcham$ of the image of the momentum map with any closed Weyl chamber is a convex polyhedron, usually called the \emph{momentum polytope}. In \cite{Duist}, Duistermaat studied the image $\mu(M^{\beta})$ of the fixed-point set $\Mb$ of an anti-symplectic involution $\beta:M \to M$. He then proved that for a hamiltonian torus action and under certain compatibility conditions of $\beta$ with the action of $T$ and the momentum map of this action, $\mu(\Mb)$ was also a convex polyhedron, in fact equal to the momentum polyheron $\mu(M)$. A convexity statement for the image $\mu(\Mb)$ of the fixed-point set of an anti-symplectic involution is since then often referred to as a \emph{real convexity theorem}, since in the special case where $M$ is k\"ahlerian, $M^{\beta}$ is a totally real totally geodesic lagrangian submanifold of $M$. The result of Duistermaat was then extended to hamiltonian actions of non-abelian compact connected Lie groups in \cite{OSS}. There, O'Shea and Sjamaar proved that, for an appropriate choice of a closed Weyl chamber $\clWcham$, the set $\mu(\Mb)\cap\clWcham$ is a convex subpolytope of the momentum polytope $\mu(M)\cap\clWcham$, obtained by intersecting this polytope with the fixed-point set of a certain involution $\taum$ on $\fraku^*=Lie(U)^*$. In particular, when $\taum$ fixes pointwise the closed Weyl chamber, the two polytopes are equal. In all of the above convexity theorems, the momentum map $\mu$ takes its values in the (dual of the) Lie algebra $\fraku$ and the convexity properties of such momentum maps, the momentum maps of hamiltonian actions, are well understood.\\
In \cite{AMM}, Alekseev, Malkin and Meinrenken introduced the notion of \emph{quasi-hamiltonian action}. There, the momentum map $\mu$ takes its values in the compact connected Lie group $U$ and they showed that if $U$ is in addition simply connected, the momentum map $\mu:M\to U$ enjoys convexity properties similar to those of usual momentum maps: the intersection of the image $\mu(M)$ of the momentum map with the exponential $\exp(\clWalc)$ of any closed Weyl alcove $\clWalc\subset \fraku$ is homeomorphic, via the exponential map, to a convex polyhedron of $\fraku$. Their result, whose proof is based on a convexity theorem of Meinrenken and Woodward for hamiltonian loop group actions (see \cite{MW}), is a quasi-hamiltonian analogue of the Kirwan convexity theorem. To the best of our knowledge, there does not exist a quasi-hamiltonian analogue of the O'Shea-Sjamaar theorem. Stating and proving such a real convexity theorem for quasi-hamiltonian actions is the objective and main result of this paper (see theorem \ref{real_convexity_thm}). To do this, we propose a particular construction of a symplectic slice in an arbitrary quasi-hamiltonian $U$-space $\qhamsp$. This construction is a quasi-hamiltonian analogue of the construction of Hilgert, Neeb and Plank in \cite{HNP} (based on the approach to momentum convexity of Condevaux, Dazord and Molino in \cite{CDM}). By definition, \emph{such a symplectic slice is a hamiltonian space in the usual sense}, for the action of a maximal \emph{torus} $T\subset U$. The idea of our proof is then to apply the Duistermaat theorem to this symplectic slice to obtain a real convexity theorem for quasi-hamiltonian actions. This will not work directly but can serve as a guideline for the proof, as we shall see in section \ref{slice}. Incidentally, the construction of the symplectic slice also enables us to give a proof of the Alekseev-Malkin-Meinrenken-Woodward convexity theorem for quasi-hamiltonian actions without using the corresponding statement for hamiltonian loop group actions.\\
The paper is organized as follows. In section \ref{convexity_properties}, we briefly recall the precise statements of the known convexity theorems in hamiltonian and quasi-hamiltonian geometry. In section \ref{real_convexity_properties}, we likewise recall the known real convexity theorems for hamiltonian actions in the usual sense and we state our main result, which is a real convexity theorem for quasi-hamiltonian actions (theorem \ref{real_convexity_thm}). In section \ref{slice}, we construct a particular symplectic slice in an arbitrary quasi-hamiltonian space $\qhamsp$ and give the proof of theorem \ref{real_convexity_thm}. The explicit description of this symplectic slice as the pre-image of a cell of the closed Weyl alcove $\exp(\clWalc)$ is crucial in the proof of theorem \ref{real_convexity_thm}, mostly to show that this symplectic slice contains fixed points of the involution $\beta:M\to M$. The other crucial ingredient in the proof is the local-global principle of Condevaux-Dazord-Molino and Hilgert-Neeb-Plank, which we quote from \cite{HNP}. In section \ref{applications}, the final one of this paper, we give examples of applications of the real convexity theorem for quasi-hamiltonian actions (theorem \ref{real_convexity_thm}), which include obtaining an example of lagrangian subspace in representation spaces of surface groups.

\section{Convexity properties of momentum maps}\label{convexity_properties}
\newcommand{\calE}{\mathcal{E}}
\newcommand{\calD}{\mathcal{D}}
In this section, we review the usual convexity theorems for momentum maps in the hamiltonian and quasi-hamiltonian setting. In the following, $M$ will always denote a manifold, $\w$ a $2$-form on $M$ and $U$ a compact connected Lie group acting on $M$. We will always assume that this action admits a momentum map $\mu:M\to \calE$. In this paper, we will consider two cases: $\calE=\fraku^*$, the dual of the Lie algebra $\fraku:=Lie(U)$ (hamiltonian case),  and $\calE=U$ (quasi-hamiltonian case). The target space $\calE$ of the momentum map is also endowed with an action of $U$ and the momentum map $\mu$ is assumed to be equivariant (if $\calE=\fraku$, the action of $U$ on $\calE$ is the co-adjoint action, if $\calE=U$ it is the conjugacy action).

\subsection{Stating a convexity theorem}\label{meaning}

Stating a convexity theorem for the momentum map $\mu:M\to \calE$ consists in showing that the image $\mu(M)$ of the momentum map has certain convexity properties: by intersecting it with a fundamental domain $\calD$ for the action of $U$ on $\calE$, the resulting set $\mu(M)\cap\calD$ is homeomorphic to a convex subset of a vector space. In this paper, this vector space will always be finite-dimensional. In fact, it will be the Lie algebra $\fraku=Lie(U)$. There are nonetheless known examples of convexity theorems for momentum maps in inifinite dimension (see for instance \cite{MW}). Observe that, since $\mu:M\to\calE$ is $U$-equivariant, $\mu(M)$ is a union of $U$-orbits. Therefore, considering the intersection $\mu(M)\cap\calD$ is equivalent to considering the projection of $\mu(M)$ onto the orbit space $\calE/U\simeq \calD$: the projection $p:\calE\to\calE/U$ induces a homeomorphism between $\mu(M)\cap\calD$ and $p\circ\mu(M)$ (recall that a fundamental domain $\calD$ contains exactly one point of each orbit). With this approach we see that, for the notion of convexity to make sense, it is sufficient to be able to identify the orbit space $\calE/U$ with a convex subset of a vector space (see in particular definition \ref{partie_convexe}). We now give the classical examples of convexity theorems for momentum maps.

\subsection{The Atiyah-Guillemin-Sternberg theorem}\label{AGS}

The following theorem, proved independently by Atiyah in \cite{Atiyah} and by Guillemin and Sternberg in \cite{GS1}, is historically the first convexity result for momentum maps. It says that the image $\mu(M)$ of the momentum map $\mu:M\to \frakt^*$ of a hamiltonian \emph{torus} action is a convex polyhedron. In this case, $U=T$ is an \emph{abelian} compact connected Lie group and $\calE=\frakt^*$ is the dual of the Lie algebra $\frakt=Lie(T)$. The co-adjoint action is trivial and so the fundamental domain is $\calD=\frakt^*$. The original statement of the Atiyah-Guillemin-Sternberg (AGS) theorem is for $M$ compact, in which case $\mu(M)$ is a compact convex poyhedron (a convex polytope), whose vertices are the images under $\mu$ of fixed points of the action of $T$ on $M$. This statement was later generalized to non-compact manifolds by Hilgert, Neeb and Plank in \cite{HNP}, based on the work of Condevaux, Dazord and Molino in \cite{CDM} (see also \cite{Sjamaar}, \cite{LMTW} and \cite{Benoist}). This generalization is obtained under the assumption that the momentum map is proper (in particular, closed).
\begin{thm}[Momentum convexity for hamiltonian torus actions]\cite{Atiyah,GS1}\label{AGS_thm}
Let $(M,\w)$ be a connected symplectic manifold endowed with a hamiltonian action of a torus $T$ whose momentum map $\mu:M\to\frakt^*$ is proper. Then:
\begin{enumerate}
\item[(i)] $\mu(M)$ is a closed polyhedral convex set.
\item[(ii)] the fibre $\mu^{-1}(\{v\})$ of $\mu$ above any $v\in\frakt^*$ is a connected set.
\item[(iii)] $\mu:M\to\mu(M)$ is an open map.
\item[(iv)] if $\mu(x)\in \mu(M)$ is an extremal point of the convex set $\mu(M)$, then $x\in M$ is a fixed point of the action: for all $t\in T$, $t.x=x$.
\end{enumerate}
If $M$ is compact then $\mu(M)$ is a convex polytope and it is the convex hull of the images under $\mu$ of the fixed points of the action.
\end{thm}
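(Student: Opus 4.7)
The plan is to follow the Hilgert-Neeb-Plank approach via the local-global principle of Condevaux-Dazord-Molino, since properness of $\mu$ is the key hypothesis and this approach is tailored to it. The strategy is to reduce the global convexity statement to a local statement, via a general topological principle for proper maps into a vector space, and then verify the local statement by exhibiting an explicit normal form for $\mu$ near each point of $M$.

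First I would establish the local normal form. Fix $x\in M$ with stabiliser $T_x\subset T$; since $T$ is abelian, $T_x$ acts linearly on the symplectic vector space $V:=T_xM/T_x(T\cdot x)$, and by the equivariant Darboux-Weinstein theorem a $T$-invariant neighbourhood of the orbit $T\cdot x$ is equivariantly symplectomorphic to a neighbourhood of the zero section in the model $T\times_{T_x}(\frakt_x^\perp\oplus V)$ (with the appropriate $2$-form), and the momentum map takes the standard quadratic form on $V$ plus the linear piece on $\frakt_x^\perp$. A direct computation then shows that the image of a small open neighbourhood $\calU_x$ of $x$ is an open subset of the convex polyhedral cone $\mu(x)+C_x\subset \frakt^*$ with apex $\mu(x)$, and that the fibres of $\mu|_{\calU_x}$ are connected. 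In particular, the openness statement (iii) is already visible locally.

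Next I would invoke the Condevaux-Dazord-Molino local-global principle in the form quoted from \cite{HNP}: a proper continuous map $f:X\to V$ from a connected Hausdorff space to a finite-dimensional real vector space, with the properties that every point of $X$ has a neighbourhood on which $f$ has connected fibres and convex image contained in a convex polyhedral cone with apex at $f(x)$, has a closed convex polyhedral image and connected fibres globally. Since $\mu$ is assumed proper and $M$ is connected, and since the preceding paragraph supplies exactly the required local data, this principle yields (i) and (ii) simultaneously, and (iii) passes from the local to the global statement by the same token.

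For (iv), if $\mu(x)$ is an extremal point of the convex polyhedron $\mu(M)$, then the local cone $\mu(x)+C_x$ at $\mu(x)$ must itself be reduced to $\{\mu(x)\}$, which by the explicit form of $C_x$ forces $\frakt_x=\frakt$, i.e.\ $T_x=T$, so $x$ is a fixed point of the action. Finally, in the compact case properness is automatic, so $\mu(M)$ is a compact convex polytope; its extremal points are finite in number and are, by (iv), images of components of $M^T$, and a compact convex polytope equals the convex hull of its extremal points. The main obstacle is the first step: setting up the local normal form cleanly enough that the polyhedral cone $C_x$ is computable, and verifying local fibre-connectedness in the linear model, because every subsequent step feeds on this local description. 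Everything else is a formal application of the Condevaux-Dazord-Molino principle.
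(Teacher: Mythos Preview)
Your approach matches the paper's exactly: the paper does not spell out a proof of this theorem but simply remarks, right after stating the local-global principle, that the AGS theorem ``immediately follows'' from the local convexity theorem (Theorem~\ref{local_convexity_thm}) together with the local-global principle (Theorem~\ref{LKP_thm}), which is precisely the Hilgert--Neeb--Plank route you describe.

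One small correction in your argument for (iv): at an extremal point $\mu(x)$ of $\mu(M)$ the local cone $C_{\mu(x)}$ is \emph{not} reduced to $\{\mu(x)\}$ in general---by the last clause of Theorem~\ref{LKP_thm} it equals the tangent cone of $\mu(M)$ at that vertex, which is typically a nontrivial pointed cone. What is true is that at an extremal point this tangent cone contains no line; since the normal form shows that $C_{\mu(x)}$ always contains the full linear subspace $\frakt_x^{\perp}$ (the piece coming from the free directions of the orbit), that subspace must be zero, whence $\frakt_x=\frakt$ and $T_x=T$. Your conclusion is right, only this intermediate step needs adjusting.
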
\newcommand{\calH}{\mathcal{H}}
This theorem enjoys very nice applications, some of which in fact motivated the search for such a statement. For instance, the conjugacy action of the torus of diagonal unitary matrices $T\subset U(n)$ on a conjugacy class of hermitian matrices $\calO\subset\calH(n)$ (a co-adjoint orbit of $U(n)$) admits as a momentum map the map taking a hermitian matrix $H$ to the collection $diag(H)\in\frakt^*=diag(\calH(n))$ of its diagonal entries. The AGS theorem then implies the Schur-Horn theorem: the diagonal of a hermitian matrix is a convex combination of permutations of its eigenvalues: $$diag(H)=\sum_{\s\in\mathfrak{S}_n}a_{\s}(\lambda_{\s_1},\, ...\, ,\lambda_{\s_n})$$ with $(\lambda_1,\, ...\, ,\lambda_n)=Spec(H)\subset\R^n\simeq Lie(T)$ and $\sum_{\s\in\mathfrak{S}_n}a_{\s}=1, a_{\s}\geq 0$. The reason why it is important to have such a statement for non-compact $M$ is that it will later be applied to \emph{symplectic slices} (also called \emph{symplectic cross-sections}) to prove more general convexity results by reduction to the case of a torus action. Symplectic slices in a $U$-space $(M,\w,\mu:M\to \calE)$ are symplectic manifolds $N\subset M$ such that $\overline{U.N}=M$ and endowed with a hamiltonian action of a maximal torus $T\subset U$. They also satisfy $\overline{\mu(N)}=\mu(M)\cap\calD$, where $\calD$ is a fundamental domain for the $U$-action on $\calE$. These symplectic slices are non-compact manifolds. All of this will be developped in section \ref{slice} in the case of quasi-hamiltonian spaces $\qhamsp$. We now move on to hamiltonian $U$-spaces $(M,\w,\mu:M\to\fraku^*)$ with $U$ non-abelian.

\subsection{The Kirwan-Guillemin-Sternberg theorem}\label{KGS}

The following theorem, proved by Kirwan in \cite{Kirwan} (and by Guillemin and Sternberg in \cite{GS2} in the case where $M$ is a K\"ahler manifold), deals with hamiltonian actions of compact connected groups $U$. In the notation of subsection \ref{meaning}, $\calE=\fraku^*$ is the dual of the Lie algebra $\fraku=Lie(U)$. A fundamental domain for the co-adjoint action of $U$ on $\fraku^*$ is homeomorphic to a closed Weyl chamber in an arbitrary Cartan subalgebra $\frakt\subset\fraku$, and is usually denoted by $\overline{\frakt^*_+}$. In particular, $\fraku^*/Ad(U)\simeq\clWcham$ is homeomorphic to a closed convex subset of a finite-dimensional vector space. As in subsection \ref{AGS}, the original statement of the Kirwan-Guillemin-Sternberg (KGS) theorem was for compact $M$ and was generalized to non-compact $M$ in \cite{HNP} (see also \cite{Sjamaar} and \cite{Benoist}) provided the momentum map is proper (in particular, closed). To state the theorem, it is convenient to assume that there is a given $Ad$-invariant non-degenerate bilinear form on $\fraku=Lie(U)$, so that we can identify $\fraku^*$ and $\fraku$ equivariantly and, for any subalgebra $\mathfrak{h}\subset\fraku$, think of $\mathfrak{h}$ as a subspace of $\fraku^*$. In particular, a Weyl chamber may be thought of as a subset of $\fraku^*$.
\begin{thm}[Momentum convexity for hamiltonian actions of compact groups]\cite{Kirwan}\label{KGS_thm}
Let $(M,\w)$ be a connected symplectic manifold endowed with a hamiltonian action of a compact connected Lie group $U$ whose momentum map $\mu:M\to \fraku^*$ is proper. Then, for any choice of a Cartan subalgebra $\frakt\subset u$ and any choice of a closed Weyl chamber $\clWcham \subset\frakt^* \subset \fraku^*$, the set $\mu(M)\cap\clWcham$ is a closed convex polyhedron of $\frakt^*$. Furthermore, the map $\mu:M\to \fraku^*$ has connected fibres and if $\calU$ is open in $M$ then $\mu(\calU)\cap\clWcham$ is open in $\mu(M)\cap\clWcham$.
\end{thm}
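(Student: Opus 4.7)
The plan is to reduce the non-abelian case to the abelian case covered by Theorem~\ref{AGS_thm}, via the construction of a \emph{symplectic cross-section}. The overall idea is to produce, inside $M$, a symplectic submanifold $N\subset M$ on which the residual action is hamiltonian for a maximal torus $T\subset U$ and whose momentum image recovers $\mu(M)\cap\clWcham$ up to taking closures and assembling face-by-face.

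First I would fix an $Ad(U)$-invariant inner product on $\fraku$ in order to identify $\fraku^*$ with $\fraku$ equivariantly, so that $\clWcham$ becomes a closed convex cone in $\frakt^*$ which is a fundamental domain for the co-adjoint action. Since $\mu(M)$ is $Ad^*(U)$-saturated by equivariance, the projection $\fraku^*\to\fraku^*/Ad^*(U)\simeq\clWcham$ restricts to a homeomorphism $\mu(M)/U\simeq\mu(M)\cap\clWcham$, so only this intersection needs to be analyzed.

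Second, I would employ the local normal form of Marle and Guillemin-Sternberg around level sets $\mu^{-1}(\xi)$, stratifying according to the open face $\s\subset\clWcham$ containing $\xi$. For the \emph{principal face} $\s_0$ (the unique open face of $\clWcham$ such that $\mu^{-1}(\s_0)$ is non-empty and open in $\mu^{-1}(\clWcham)$, which exists by properness of $\mu$ and connectedness of $M$), this produces a connected symplectic cross-section $N_{\s_0}\subset M$ invariant under $T$, whose $T$-momentum map coincides with $\mu|_{N_{\s_0}}$ and whose image equals $\mu(M)\cap\s_0$. Properness descends from $\mu$ to $\mu|_{N_{\s_0}}$, so Theorem~\ref{AGS_thm} applied to $(N_{\s_0},\w|_{N_{\s_0}},\mu|_{N_{\s_0}})$ exhibits $\mu(M)\cap\s_0$ as a relatively open polyhedral convex subset of $\frakt^*$ and furnishes the corresponding openness statement. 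Using properness and the density of the principal stratum in $\mu^{-1}(\clWcham)$, one then shows that $\mu(M)\cap\clWcham$ is the closure of $\mu(M)\cap\s_0$, from which convexity and the polyhedral character of the whole intersection follow.

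The connectedness of fibres of $\mu$ would be proved by induction on $\dim U$: on the cross-section, fibres of $\mu|_{N_{\s_0}}$ are connected by Theorem~\ref{AGS_thm}(ii), and for $\xi$ in a lower-dimensional face the local normal form identifies $\mu^{-1}(\xi)$ with a bundle whose fibres are symplectic reductions by strictly smaller subgroups of $U$, to which the inductive hypothesis applies. The main technical obstacle I expect is the construction of the symplectic cross-section $N_{\s_0}$ itself together with the verification that its momentum image exhausts $\mu(M)\cap\s_0$: both rely on careful use of the Marle-Guillemin-Sternberg normal form and on properness of $\mu$, needed to rule out escape of level sets as one approaches the walls of $\clWcham$. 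Once this geometric input is in place, convexity, openness, and the connected-fibre property all follow formally from Theorem~\ref{AGS_thm} applied to the cross-section.
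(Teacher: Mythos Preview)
Your overall architecture---reduce to a torus action on a symplectic cross-section over the principal face of $\clWcham$, then invoke Theorem~\ref{AGS_thm}---matches the approach the paper attributes to Hilgert, Neeb and Plank in \cite{HNP}. However, there is a genuine gap in your argument at the step where you assert that ``properness descends from $\mu$ to $\mu|_{N_{\s_0}}$.'' This is false in general: the cross-section $N_{\s_0}$ is the preimage of an \emph{open} face $\s_0$ of $\clWcham$, hence $N_{\s_0}$ is not closed in $M$, and the restriction of a proper map to a non-closed subset need not be proper. Concretely, a compact set $K\subset\frakt^*$ may meet the boundary $\overline{\s_0}\setminus\s_0$, and then $(\mu|_{N_{\s_0}})^{-1}(K)=\mu^{-1}(K\cap\s_0)$ can fail to be closed in $M$. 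The paper makes exactly this point when discussing the analogous quasi-hamiltonian construction (see the proof of Lemma~\ref{conveximage} and the sentence following Theorem~\ref{KGS_thm}: ``the AGS theorem does \emph{not} apply directly to $N$'').

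The remedy, as carried out in \cite{HNP} and reproduced in the paper for the quasi-hamiltonian case, is the local-global principle of Theorem~\ref{LKP_thm} together with Proposition~\ref{ascending_sequence}: one exhausts the open face $\s_0$ by an ascending sequence of \emph{closed} locally polyhedral convex sets $D_n$, so that each $Y_n:=\mu^{-1}(D_n)$ is closed in $M$ and $\mu|_{Y_n}$ \emph{is} proper; convexity of $\mu(Y_n)$ then follows from Theorem~\ref{LKP_thm}, and $\mu(N_{\s_0})=\bigcup_n\mu(Y_n)$ is convex as an increasing union of convex sets. Without this exhaustion argument your appeal to Theorem~\ref{AGS_thm} is unjustified. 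Your proposed inductive argument for fibre-connectedness is also different from (and more elaborate than) what is actually needed: once the local-global principle gives connected fibres on each $Y_n$, hence on $N_{\s_0}$, one extends to all of $M$ using density of $U.N_{\s_0}$, equivariance, and a topological lemma of Benoist (Lemma~\ref{connexite_des_fibres}); no induction on $\dim U$ or Marle--Guillemin--Sternberg normal form at lower strata is required.
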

In \cite{Sjamaar}, Sjamaar showed that if additionally $M$ is compact, then $\mu(M)\cap\clWcham$ is the convex hull of the set $\mu(E)$ where $E=\{x\in M\ |\ \mu(x)\in\clWcham$ and $\fraku_{\mu(x)}=[\fraku_{\mu(x)},\fraku_{\mu(x)}]\oplus\fraku_x\}$ and that this set $\mu(E)$ is a discrete subset of $\clWcham$ (this last statement being true even if $M$ is not compact). There are now many proofs of the KGS convexity theorem, for which we refer to the above-cited papers. The one we will often use as a guiding analogue in our study of the quasi-hamiltonian case is the one given by Hilgert, Neeb and Plank in \cite{HNP}. The general argument, based on ideas of Guillemin and Sternberg in \cite{GS1} and Condevaux, Dazord and Molino in \cite{CDM}, is as follows: one constructs a symplectic slice $N\subset M$ to which one applies the AGS theorem to show that $\mu(N)$ is convex; since by construction $\overline{\mu(N)}=\mu(M)\cap\clWcham$, one gets convexity of $\mu(M)\cap\clWcham$. The main difficulty is to be able to apply the AGS theorem to the symplectic slice $N$. As a matter of fact, the AGS theorem does \emph{not} apply directly to $N$ (even the version of the AGS theorem for non-compact manifolds). But the use of the local-global principle enables one to overcome this difficulty (see \cite{HNP}). We will see in subsection \ref{proof_AMMW} how one can proceed in the same way in the quasi-hamiltonian case.\\ As an application of theorem \ref{KGS_thm}, consider the case where $U=U(n)$ and $M=\calO_{\lambda_1}\times\calO_{\lambda_2}$ is a product of co-adjoint orbits. Then, for $(H_1,H_2)\in\calO_{\lambda_1}\times\calO_{\lambda_2}$, one has $\mu(H_1,H_2)=H_1+H_2$ and $\mu(M)\cap\clWcham$ consists of the possible (ordered) spectra for the sum of two hermitian matrices. By theorem \ref{KGS_thm}, these possible spectra form a closed convex subset of $\R^n\simeq Lie(T)$ (this is the Weyl-Horn problem; see \cite{Ber-Sj} for a symplectic approach to this problem).

\subsection{Momentum maps with values in a compact connected Lie group}\label{AMMW}

We now turn to momentum maps with values in a compact connected Lie group. More precisely, the notion we shall be dealing with in the rest of this paper is the notion of \emph{quasi-hamiltonian space}, which is due to Alekseev, Malkin and Meinrenken. In the notation of subsection \ref{meaning}, the momentum map $\mu:M\to\calE$ takes its values in the Lie group $U$ acting on the manifold $M$: one has $\calE=U$ and a fundamental domain $\calD$ for the conjugacy action of $U$ on itself is homeomorphic to the quotient of the closed Weyl alcove $\clWalc\subset \frakt=Lie(T)$ by the affine action of $\pi_1(U)$ on $\frakt$, where $T$ is a maximal torus of $U$ (see for instance \cite{Bourbaki}, p.45). In particular, $U/Int(U) \simeq \clWalc / \pi_1(U)$ is homeomorphic to the closed convex polyhedron $\clWalc\subset \frakt$ if and only if $\pi_1(U)=0$. Thus, when $U$ is simply connected, the notion of a convex subset of $U/Int(U)$ makes sense:
\begin{defi}\label{partie_convexe}
Let $U$ be a compact connected simply connected Lie group. A subset $A\subset U/Int(U)$ is called \emph{convex} if it is mapped, under the homeomorphism $U/Int(U)\simeq \clWalc\subset \frakt$, to a convex subset of $\frakt$.
\end{defi}
Observe additionally that $\clWalc$ is a \emph{compact} polyhedron. When $U$ is not simply connected, $\clWalc/ \pi_1(U)$ is not simply connected either, and therefore cannot be homeomorphic to a convex subset of a vector space. From now on, we will always assume that the compact connected Lie group $U$ is simply connected. Then, there is a convenient way of understanding the identification $U/Int(U)\simeq \clWalc$: the exponential map $\exp:\fraku=Lie(U)\to U$ restricts to an homeomorphism $\exp|_{\clWalc}:\clWalc\to\exp(\clWalc)$ and $\calD:=\exp(\clWalc)$ is a fundamental domain for the conjugacy action of $U$ on itself. A subset $A\subset \exp(\clWalc)$ is then called convex if $\exp^{-1}(A)\subset \frakt$ is convex. We then have the following convexity statememt, which we quote from \cite{AMM} (where it is deduced from the convexity theorem for hamiltonian loop group actions of \cite{MW}):
\begin{thm}[Momentum convexity for quasi-hamiltonian actions of compact groups]\cite{AMM}\label{AMMW_thm}
Let $U$ be a compact connected simply connected Lie group and let $\qhamsp$ be a connected quasi-hamiltonian $U$-space with proper momentum map $\mu:M\to U$. Then, for any choice of a maximal torus $T\subset U$ and any choice of a closed Weyl alcove $\clWalc\subset\frakt=Lie(T)$, the set $(\mu(M)\cap\exp(\clWalc))\subset \exp(\clWalc)$ is a closed convex polyhedron of $\exp(\clWalc)\simeq \clWalc\subset\frakt$. Furthermore, the map $\mu:M\to U$ has connected fibres and if $\calU$ is open in $M$ then $\mu(\calU)\cap\exp(\clWalc)$ is open in $\mu(M)\cap\exp(\clWalc)$.
\end{thm}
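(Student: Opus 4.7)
The plan is to mimic the Hilgert-Neeb-Plank approach to the Kirwan-Guillemin-Sternberg theorem in the quasi-hamiltonian setting, avoiding the passage through hamiltonian loop group actions used in \cite{AMM}. The idea is to construct a \emph{symplectic slice} $N \subset M$ on which the $U$-action restricts to a hamiltonian action of the maximal torus $T$, and to apply a torus convexity statement to the $T$-momentum map $\tmu := \exp^{-1} \circ \mu|_N : N \to \frakt$, recovering the convexity of $\mu(M) \cap \exp(\clWalc)$ from that of $\tmu(N) \subset \frakt$.

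For the construction of $N$, I would exploit the fact that, because $U$ is simply connected, the conjugation stabilizer $U_{\exp(\xi)}$ of a point $\exp(\xi) \in \exp(\clWalc)$ depends only on the open face of $\clWalc$ containing $\xi$, and always contains the maximal torus $T$. Fix an appropriate face $\s$ of $\clWalc$ whose $\exp$-image meets $\mu(M)$, and let $H := U_{\exp(\xi)}$ for any $\xi \in \s$. Take $N$ to be a suitable connected component of an open submanifold of $\mu^{-1}(\exp(\calV))$, where $\calV \subset \clWalc$ is the union of the open faces whose closure contains $\s$, chosen so as to be preserved by a residual $T$-action and to satisfy $U \cdot N$ dense in $M$. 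Using the local normal form for quasi-hamiltonian spaces proved in \cite{AMM}, one checks that $\w$ restricts to a symplectic form on $N$, that the restricted $T$-action is hamiltonian, and that $\tmu = \exp^{-1} \circ \mu|_N$ is a genuine $T$-momentum map on $N$. By $U$-equivariance of $\mu$, $\tmu(N)$ recovers (up to closure) $\exp^{-1}(\mu(M) \cap \exp(\clWalc))$.

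The delicate step is to show that $\tmu(N)$ is a closed convex polyhedron of $\frakt$. The Atiyah-Guillemin-Sternberg theorem does not apply directly: although $\mu : M \to U$ is proper, the restricted map $\tmu : N \to \frakt$ need not be proper on the non-compact slice $N$. I would therefore invoke the local-global principle of Condevaux-Dazord-Molino, in the form quoted from \cite{HNP}: for a smooth map $f : N \to V$ from a connected manifold to a finite-dimensional vector space which has connected fibres and is locally convex at every point (together with a suitable closedness condition), the image $f(N)$ is a closed convex polyhedron, and $f$ is open onto its image. The main obstacle is verifying these local hypotheses at every point $x \in N$, in particular at points whose momentum image lies on a proper face of $\clWalc$ strictly containing $\s$; this again relies on the local normal form, which identifies a neighbourhood of $x$ in $M$ with a standard quasi-hamiltonian local model on which classical \emph{local} AGS-type statements yield local convexity and local fibre-connectedness of $\tmu$. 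The statements about connectedness of fibres and openness of $\mu|_\calU$ onto its image in $\mu(M)\cap\exp(\clWalc)$ then follow from the corresponding statements on $N$ via $U$-equivariance of $\mu$ and the density of $U\cdot N$ in $M$.
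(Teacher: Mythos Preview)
Your overall strategy matches the paper's exactly: construct a symplectic slice $N\subset M$ carrying a genuine hamiltonian $T$-action with momentum map $\tmu=\exp^{-1}\circ\mu|_N$, invoke the local-global principle of \cite{HNP} to get convexity of $\tmu(N)$, and recover $\mu(M)\cap\exp(\clWalc)$ as its closure via $U$-equivariance and density of $U\cdot N$.

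Two points deserve sharpening, and the second is a genuine gap.

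\textbf{Slice construction.} The paper does not use an open-star neighbourhood of a face $\sigma$ as you suggest; it takes $N=M_S:=\mu^{-1}(\exp(\Walc_S))$ for a \emph{single} cell $\Walc_S$ of $\clWalc$, characterized as the unique cell meeting $\mu(M)$ among those whose points have $U$-conjugacy class of maximal dimension $q$ (lemma~\ref{MS}). Connectedness of $M_S$ and density of $U\cdot M_S$ in $M$ are then proved directly from this description (lemmas~\ref{MSconnexe}, \ref{Mq}, corollary~\ref{UMS}), and the symplectic and hamiltonian properties are read off the quasi-hamiltonian axioms without appealing to any local normal form from \cite{AMM}. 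Your open-star construction is a legitimate alternative (closer to \cite{LMTW}), but you leave unspecified which face $\sigma$ to take and why the resulting $N$ is connected.

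\textbf{Non-properness.} You correctly flag that $\tmu|_N$ need not be proper, but then invoke the local-global principle with an unnamed ``suitable closedness condition''. The version quoted from \cite{HNP} (theorem~\ref{LKP_thm}) requires properness outright; there is no weaker substitute on offer. The paper's fix is concrete: exhaust the open cell $\Walc_S$ by an ascending sequence of \emph{closed} locally polyhedral convex subsets $D_n$, set $Y_n:=\mu^{-1}(\exp(D_n))$, and observe that $\mu|_{Y_n}$ \emph{is} proper because $Y_n$ is closed in $M$ and $\mu$ is proper on $M$. Proposition~\ref{ascending_sequence} guarantees that $\mu|_{Y_n}$ still gives rise to local convexity data (with cones intersected against the tangent cone of $D_n$), so theorem~\ref{LKP_thm} applies to connected components $Z_n\subset Y_n$ with $M_S=\bigcup_n Z_n$. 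One concludes that $\mu(M_S)=\bigcup_n\mu(Z_n)$ is an ascending union of convex polytopes, hence convex (lemma~\ref{conveximage}). Without this exhaustion trick or an equivalent device, your appeal to the local-global principle does not go through. The same ascending sequence is also what yields fibre-connectedness of $\mu$ on $M_S$; extending this to $\overline{M_S}$ requires an extra topological lemma (lemma~\ref{connexite_des_fibres}, quoted from \cite{Benoist}), which density of $U\cdot N$ alone does not give.
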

\begin{rema}[On the assumption of properness of the momentum map]
When $U$ is a compact Lie group, the known examples of quasi-hamiltonian $U$-spaces \emph{(}\cite{AMM}, \cite{HJS}\emph{)} are compact, so the assumption that $\mu$ be a proper map is superfluous. Nonetheless, the convexity theorem remains true under this assumption, so that it could be applied to non-compact quasi-hamiltonian spaces, should such examples be discovered.
\end{rema}
As explained in \cite{AMM}, although this convexity statement concerns quasi-hamiltonian actions, it can be deduced from the analoguous statement for hamiltonian loop group actions. In subsection \ref{proof_AMMW}, we will give another proof of this theorem, based on the approach to momentum convexity for usual hamiltonian actions developped in \cite{HNP}: we construct a particular symplectic slice $N\subset M$ for any quasi-hamiltonian action and we use the local-global pinciple of \cite{HNP} to show that $\mu(N)$ is convex. In subsection \ref{main_proof}, we use this construction of a symplectic slice to give a proof of our main result (theorem \ref{real_convexity_thm}). Before ending this section, we give an example of application of theorem \ref{AMMW_thm}, which is an analogue of the application of the KGS theorem given in subsection \ref{KGS}. Consider the case where $U=U(n)$ and $M=\calC_{\lambda_1}\times\calC_{\lambda_2}$ is a product of conjugacy classes. Then, for $(u_1,u_2)\in\calC_{\lambda_1}\times\calC_{\lambda_2}$, one has $\mu(u_1,u_2)=u_1u_2$ and $\mu(M)\cap\exp(\clWalc)$ consists of the possible (ordered) spectra for the product of two unitary matrices. By theorem \ref{AMMW_thm}, these possible spectra form a closed convex subset of $\R^n\simeq Lie(T)$.

\section{Real convexity theorems}\label{real_convexity_properties}

The notion of \emph{real convexity theorem} appears when one considers not the full image $\mu(M)$ of the momentum map but rather the image $\mu(M^{\beta})$ of the fixed-point set of an involution $\beta:M\to M$ satisfying $\beta^*\w=-\w$. In the usual hamiltonian case, $\w$ is a symplectic form and $M^{\beta}$ is a lagrangian submanifold of $M$. In particular when $M$ is a K\"ahler manifold, $M^{\beta}$ is a \emph{totally real} totally geodesic submanifold of $M$. This inspired the terminology \emph{real convexity theorem} for convexity properties of $\mu(M^{\beta})$. 

\subsection{Stating a real convexity theorem}\label{meaning_real}

Recall from subsection \ref{meaning} that convexity theorems for momentum maps $\mu:M\to\calE$ are obtained by intersecting the image of $\mu(M)$ with a fundamental domain domain $\calD$ of the action of $U$ on $\calE$. Since the work of O'Shea and Sjamaar in \cite{OSS}, it is known that to obtain a real convexity theorem for hamiltonian actions of compact groups, one needs to assume that the Lie group $U$ is endowed with an involutive automorphism $\tau$ and that the anti-symplectic involution $\beta$ on $M$ is compatible with the action of $U$ and with the momentum map $\mu:M\to \fraku^*$ in the sense that $\beta(u.x)=\tau(u).\beta(x)$ and $\mu\circ\beta=\taum\circ\mu$, where $\taum$ denotes the involution $(-d\tau)^*:\fraku^*\to\fraku^*$. A real convexity theorem is then a statement on the convexity properties of $\mu(M^{\beta})$, namely that $\mu(M^{\beta})\cap\calD$ is a convex polyhedron and in fact a subpolyhedron of the momentum polyhedron $\mu(M)\cap\calD$. More precisely, for an appropriate choice of $\calD$, one has $\mu(M^{\beta})\cap\calD=(\mu(M)\cap\calD)\cap Fix(\taum)$. Observe that by compatibility of $\beta$ with the momentum map $\mu:M\to\fraku^*$, one automatically has $\mu(M^{\beta})\subset Fix(\taum)$. In this hamiltonian context, choosing $\calD$ appropriately means choosing the closed Weyl chamber $\clWcham$ such that $\clWcham\cap Fix(\taum)$ is a fundamental domain for the action of (the neutral component of) $U^{\tau}=Fix(\tau)\subset U$ on $Fix(\taum)\subset \fraku^*=\calE$. Based on this remark, a general real convexity theorem should be a statement of the following form: the intersection of the image $\mu(M)$ of the momentum map with a fundamental domain $\calD_0$ of the action of $U^{\tau}$ on $Fix(\taum)$ is a convex set, equal to $\mu(M^{\beta})\cap\calD_0$. This will indeed be the case in theorem \ref{real_convexity_thm}. There, $\calE=U$ and $\taum(u)=\tau(u^{-1})$. The fixed-point set $Fix(\taum)$ is not connected in general but the neutral component $K$ of $U^{\tau}$ acts on the connected component $Q_0$ of $1$ in $Fix(\taum)$. If the closed Weyl alcove $\clWalc$ is chosen appropriately, then $\exp(\clWalc)\cap Q_0$ is a fundamental domain for this action and it is natural to conjecture that if $\beta:M\to M$ is a form-reversing compatible involution on the quasi-hamiltonian space $M$, then $\mu(M^{\beta})\cap\exp(\clWalc)$ is a convex subpolyhedron of $\mu(M)\cap\exp(\clWalc)$, obtained by intersecting it with $Q_0$. In theorem \ref{real_convexity_thm}, we prove this conjecture under the assumption that the symmetric pair $(U,\tau)$ is of maximal rank, meaning that there exists a maximal torus $T\subset U$ fixed \emph{pointwise} by $\taum$ (such an involution always exists on an arbitrary compact connected Lie group, see for instance \cite{Loos}). In this case, the fundamental domain $\exp(\clWalc)\cap Q_0$ is simply $\exp(\clWalc)\subset T\subset Q_0$ (for instance: if $\tau(u)=\overline{u}$ on $U=U(n)$, every symmetric unitary matrix is conjugate to a diagonal one by a real orthogonal matrix).

\subsection{The Duistermaat theorem}\label{Duist}

The Duistermaat theorem (\cite{Duist}) is a real version of the AGS theorem, the involutive automorphism on the (abelian) group $T$ being $\tau(t)=t^{-1}$. In particular, $\taum=Id_{\frakt^*}$. The original statement was for $M$ compact and was extended to non-compact $M$ in \cite{HNP}.
\begin{thm}[A real convexity theorem for hamiltonian torus actions]\cite{Duist}\label{Duist_thm}
Let $(M,\w,\mu:M\to\frakt^*)$ be a connected hamiltonian $T$-space with proper momentum map. Let $\beta:M\to M$ be an involution satisfying:
\begin{enumerate}
\item[(i)] $\beta^*\w=-\w$.
\item[(ii)] $\beta(t.x)=t^{-1}.\beta(x)$ for all $x\in M$ all $t\in T$.
\item[(iii)] $\mu\circ\beta=\mu$.
\item[(iv)] $M^{\beta}\not=\emptyset$.
\end{enumerate}
Then, for every connected component $L\subset M^{\beta}$ of the fixed-point set of $\beta$, $\mu(L)$ is a closed convex set and one has: $\mu(L)=\mu(M)$.
\end{thm}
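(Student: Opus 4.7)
The strategy is to reduce the statement to the AGS convexity theorem (theorem \ref{AGS_thm}) via a local argument showing that $\mu(L)$ fills out $\mu(M)$ near every point of $L$. Condition (iii) directly gives the trivial inclusion $\mu(L)\subset\mu(M)$. The restriction $\mu|_L$ is proper since $L$ is closed in $M$ and $\mu$ is proper, so $\mu(L)$ is closed in $\frakt^*$. If one can establish the local equality $\mu(L)\cap V = \mu(M)\cap V$ for an open neighbourhood $V$ of $\mu(x)$ in $\frakt^*$ at each $x\in L$, then $\mu(L)$ is also open in $\mu(M)$; the connectedness of $\mu(M)$ (image of the connected space $M$) together with $L\neq\emptyset$ then forces $\mu(L)=\mu(M)$, and convexity comes for free from theorem \ref{AGS_thm}.

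The heart of the argument is thus the local equality at each $x\in L$. At a point $x$ which is also $T$-fixed, I would use a doubly equivariant Darboux chart obtained by averaging the $T$-equivariant Darboux theorem over the two-element group $\{1,\beta\}$; this linearises both the $T$-action and the involution $\beta$ simultaneously near $x$. The problem then reduces to linear algebra on $V:=T_x M$ with quadratic momentum map
\[
\mu(v)=\mu(x)+\tfrac{1}{2}\sum_\alpha|v_\alpha|^2\alpha
\]
summed over the real $T$-weight decomposition $V=V_0\oplus\bigoplus_\alpha V_\alpha$ into the $T$-fixed part and $2$-dimensional weight pieces. The anti-equivariance $d\beta_x\circ t=t^{-1}\circ d\beta_x$, combined with the isomorphism of the rotation representations of weights $\alpha$ and $-\alpha$ as real $2$-dimensional representations, forces $d\beta_x$ to preserve each isotypic summand and to act on each $V_\alpha$ as a reflection with one-dimensional fixed line; on $V_0$ it is an anti-symplectic involution whose fixed set is lagrangian. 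The one-dimensional fixed line of $d\beta_x$ in each $V_\alpha$ already realises the entire ray $\R_{\geq 0}\alpha$ under $\mu$, so $\mu(V^{d\beta_x})=\mu(V)$ near $\mu(x)$.

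For a non-$T$-fixed $x\in L$, I would apply the $T$-equivariant symplectic slice theorem at $x$, again averaged to be $\beta$-equivariant, producing a local model $T\times_{T_x}Y$ with $Y$ a hamiltonian $T_x$-space carrying a compatible anti-symplectic involution and in which $x$ corresponds to a $T_x$-fixed point of $Y$. The previous case applied to $Y$ yields the local equality on the slice, which then propagates to $M$ through the slice description.

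The main obstacle is precisely this double equivariance. The $T$-equivariant Darboux and symplectic slice theorems are classical, but accommodating the anti-symplectic involution $\beta$ requires an averaging argument over $\{1,\beta\}$ that must be executed carefully because $\beta$ is anti-symplectic rather than symplectic; in particular, the Moser-type isotopy producing the linearising diffeomorphism has to transform covariantly under $\beta$ in order that its time-one map conjugate $\beta$ to $d\beta_x$ rather than to some twisted version. Once this normal-form machinery is in place, the global conclusion follows from the short argument sketched in the first paragraph.
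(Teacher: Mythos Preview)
Your local analysis is essentially the content of point (iii) of theorem \ref{local_convexity_thm}: the $\beta$-equivariant normal form shows that $\mu$ restricted to $M^{\beta}$ realises, near each $x\in L$, the \emph{same} polyhedral cone $C_{\mu(x)}$ as $\mu$ does on $M$. This is the correct key ingredient, and your concern about the $\beta$-equivariance of the Moser isotopy is exactly the technical point that needs care (and is handled in \cite{Duist} and \cite{HNP}).

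Where you diverge from the argument the paper invokes (following \cite{HNP}) is in the passage from local to global. You argue: $\mu(L)$ is closed in $\mu(M)$ by properness, open in $\mu(M)$ by the local equality, hence equal to $\mu(M)$ by connectedness; convexity of $\mu(L)$ then falls out of theorem \ref{AGS_thm}. The paper's route (see corollary \ref{real_Prinzip}, lemma \ref{inclusion_de_deux_convexes}, and their use in proposition \ref{Duist-like}) instead first proves $\mu(L)$ is itself convex via corollary \ref{real_Prinzip}, identifies its supporting cones as the $C_{\mu(x)}$, and then applies lemma \ref{inclusion_de_deux_convexes} to the inclusion $\mu(L)\subset\mu(M)$ of convex sets with matching cones to conclude equality.

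Your open--closed--connected argument is cleaner when $\mu$ is globally proper, as here. The cone-matching argument of \cite{HNP} is what the paper actually needs later, because in proposition \ref{Duist-like} the relevant map $\tmu|_{M_S}$ is \emph{not} proper and the conclusion must be assembled from an ascending sequence of proper restrictions $\tmu|_{Z_n}$; there one cannot simply appeal to connectedness of a single image, and the cone comparison via lemma \ref{inclusion_de_deux_convexes} is the tool that survives. So both approaches are correct for the theorem as stated, but the paper's choice is dictated by its subsequent application to the non-proper symplectic slice.
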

We will use some of the ideas of the proof of Duistermaat's theorem given in \cite{HNP} in subsection \ref{main_proof}. As an application of theorem \ref{Duist_thm}, let us go back to the example given in subsection \ref{AGS}. In this case, the Duistermaat theorem says: the diagonal of a real symmetric matrix is a convex combination of permutation of its eigenvalues \emph{and} any such convex combination is the diagonal of a real symmetric matrix with the same spectrum.

\subsection{The O'Shea-Sjamaar theorem}\label{OSS}

The O'Shea-Sjamaar theorem is a real version of the KGS theorem. The proof given in \cite{OSS} consists, as in \cite{Sjamaar}, in establishing the result for projective and affine varieties and then handle the general case using a local normal form theorem.
\begin{thm}[A real convexity theorem for hamiltonian actions of compact groups]\cite{OSS}\label{OSS_thm}
Let $(M,\w,mu:M\to\fraku^*)$ be a connected hamiltonian $U$-space with proper momentum map. Let $\tau:U\to U$ be an involutive automorphism of $U$. Denote by $\taum$ the involution $(-d\tau)^*:\fraku^*\to\fraku^*$ and by $\mathfrak{q}$ the $(-1)$-eigenspace of $d\tau:\fraku\to\fraku$. Choose a maximal abelian subspace $\mathfrak{a}$ of $\mathfrak{q}$, and a Cartan subalgebra $\frakt$ of $\fraku$ containing $\mathfrak{a}$ \emph{(}such a Cartan subalgebra is globally stable under $d\tau$\emph{)} and a closed Weyl chamber $\clWcham\subset\fraku^*$. Let $\beta:M\to M$ be an involution satisfying:
\begin{enumerate}
\item[(i)] $\beta^*\w=-\w$.
\item[(ii)] $\beta(u.x)=\tau(u).\beta(x)$ for all $u\in U$ and all $x\in M$.
\item[(iii)] $\mu\circ \beta=\taum\circ\mu$.
\item[(iv)] $M^{\beta}\not=\emptyset$.
\end{enumerate}
Then, for every connected component $L\subset M^{\beta}$, $\mu(L)\cap\clWcham$ is a closed convex polyhedron obtained by intersecting the momentum polyhedron $\mu(M)\cap\clWcham$ with $Fix(\taum)\subset \frakt^*$: $$\mu(L)\cap\clWcham=(\mu(M)\cap\clWcham)\cap Fix(\taum).$$
\end{thm}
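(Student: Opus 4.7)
The inclusion $\mu(L)\cap\clWcham\subset (\mu(M)\cap\clWcham)\cap Fix(\taum)$ is immediate from compatibility~(iii): every $x\in M^{\beta}$ satisfies $\mu(x)=\mu(\beta(x))=\taum(\mu(x))$, hence $\mu(x)\in Fix(\taum)$. The content of the theorem lies in the reverse inclusion, for which my plan is to reduce the problem to Duistermaat's theorem~\ref{Duist_thm} by means of a suitable symplectic cross-section, in the spirit of the Hilgert--Neeb--Plank approach to the KGS theorem.

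Decompose the Cartan subalgebra as $\frakt=\mathfrak{h}\oplus\mathfrak{a}$, where $\mathfrak{h}$ is the $(+1)$-eigenspace of $d\tau|_{\frakt}$, and write $T=H\cdot A$ with $H=\exp(\mathfrak{h})$ and $A=\exp(\mathfrak{a})$. A direct computation shows that $\tau$ restricts to the identity on $H$ and to inversion on $A$, while $\taum|_{\frakt^*}$ is $+\mathrm{id}$ on $\mathfrak{a}^*$ and $-\mathrm{id}$ on $\mathfrak{h}^*$; in particular $Fix(\taum)\cap\frakt^*=\mathfrak{a}^*$, and $\clWcham$ can be chosen $\taum$-stable. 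Following \cite{HNP}, construct a symplectic cross-section $N:=\mu^{-1}(\s)$ with $\s$ a $\taum$-stable open subset of the open stratum of $\clWcham$; then $N$ is a $T$-invariant symplectic submanifold of $M$, $\mu|_N$ is a proper momentum map for the induced hamiltonian $T$-action, and $\mu(N)$ is open and dense in $\mu(M)\cap\clWcham$. The $\taum$-stability of $\s$, combined with compatibilities~(ii) and~(iii), ensures $\beta(N)\subset N$. Restrict attention to the $A$-action on $N$ and to the projected momentum map $\mu_A:=\pi_{\mathfrak{a}^*}\circ\mu|_N: N\to\mathfrak{a}^*$. For $a\in A$ one has $\beta(a\cdot x)=\tau(a)\cdot\beta(x)=a^{-1}\cdot\beta(x)$, and $\mu_A\circ\beta=\pi_{\mathfrak{a}^*}\circ\taum\circ\mu|_N=\mu_A$, so that $(N,\w|_N,\mu_A)$ together with $\beta|_N$ meets the hypotheses of Duistermaat's theorem.

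Applying theorem~\ref{Duist_thm} componentwise (via the local-global principle of \cite{HNP}, which compensates for the fact that $N$ is in general not complete) yields $\mu_A(L'\cap N)=\mu_A(N)$ for every connected component $L'$ of $N^{\beta}$. Since $x\in M^{\beta}$ forces $\mu(x)\in Fix(\taum)\cap\frakt^*=\mathfrak{a}^*$, the restriction $\mu|_{N^{\beta}}$ coincides with $\mu_A|_{N^{\beta}}$, and therefore $\mu(L\cap N)\supset\mu(N)\cap\mathfrak{a}^*$ for every component $L$ of $M^{\beta}$ meeting $N$. A density-and-closedness argument, based on properness of $\mu$, on the density of $\mu(N)$ in $\mu(M)\cap\clWcham$, and on the convexity provided by theorem~\ref{KGS_thm}, then upgrades this to $\mu(L)\cap\clWcham\supset(\mu(M)\cap\clWcham)\cap Fix(\taum)$, closing the argument. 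The principal obstacle is twofold: arranging $\s$ (and hence $N$) to be genuinely $\beta$-stable, which relies on the existence of a $\taum$-stable Weyl chamber adapted to $\mathfrak{a}$; and showing that \emph{every} connected component $L$ of $M^{\beta}$ meets the slice $N$, which requires a local analysis of $\beta$-fixed points near arbitrary fibres of $\mu$ via the equivariant Marle--Guillemin--Sternberg normal form.
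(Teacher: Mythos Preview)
The paper does not prove this theorem: it is quoted from \cite{OSS}, and the only description of the proof is the single sentence ``The proof given in \cite{OSS} consists, as in \cite{Sjamaar}, in establishing the result for projective and affine varieties and then handle the general case using a local normal form theorem.'' Your proposal therefore cannot be compared to a proof in the paper, only to that brief summary of O'Shea--Sjamaar's original argument.

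Your approach---reduce to Duistermaat via a symplectic cross-section in the Hilgert--Neeb--Plank style---is genuinely different from the O'Shea--Sjamaar strategy, and it is in fact exactly the method the present paper develops to prove its \emph{own} main result, the quasi-hamiltonian analogue (theorem~\ref{real_convexity_thm}). In that sense your sketch anticipates the paper's philosophy. What the slice approach buys is a uniform reduction to the abelian case; what the O'Shea--Sjamaar approach buys is access to algebro-geometric tools (Mumford's theory) for the model cases, after which only a local normal form argument is needed.

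That said, your sketch has two genuine gaps. First, you assert that $\mu|_N$ is a proper momentum map, then a few lines later concede that $N$ is ``in general not complete'' and invoke the local-global principle to compensate. These statements are in tension: as the paper itself stresses (see the proof of lemma~\ref{conveximage} and the discussion opening subsection~\ref{main_proof}), $\mu|_N$ is \emph{not} proper in general, and one must instead write $N$ as an ascending union of closed pieces on which the restriction is proper, apply the local-global principle piecewise, and pass to the limit. Your proposal gestures at this but does not carry it out. Second---and you flag this yourself---showing that \emph{every} connected component $L$ of $M^{\beta}$ meets the slice $N$ is the crux of the matter, and it is not resolved by a vague appeal to the Marle--Guillemin--Sternberg normal form. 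In the paper's quasi-hamiltonian proof this step (lemmas~\ref{pts_image_reg_ds_L} and~\ref{pts_fixes_betaS}) relies essentially on the maximal-rank hypothesis on $(U,\tau)$, which guarantees that the $K$-fundamental domain on $Fix(\taum)$ coincides with the $U$-fundamental domain; in the general O'Shea--Sjamaar setting that coincidence fails, and the argument you outline would need substantial additional work to go through.
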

As an application of this theorem, let us come back to the example given in subsection \ref{KGS}. When $U=U(n)$ and $\tau:u\mapsto\overline{u}$, the Cartan subalgebra consisting of diagonal hermitian matrices is fixed pointwise by $\taum:H\mapsto H^t$ (the symmetric pair $(U,\tau)$ is of maximal rank): one has $\mathfrak{a}=\frakt$ and therefore $\clWcham\subset Fix(\taum)$, hence $\mu(L)\cap\clWcham=\mu(M)\cap\clWcham$. Consequently, the possible spectra of a sum $S_1+S_2$ of two real symmetric matrices are the same as those of a sum $H_1+H_2$ of two hermitian matrices satisfying $Spec(H_i)=Spec(S_i)$ for $i=1,2$.

\subsection{A real convexity theorem for quasi-hamiltonian actions}\label{main_result}

We can now state the main result of this paper, which is a real convexity theorem for quasi-hamiltonian actions. This result is a quasi-hamiltonian analogue of the O'Shea-Sjamaar theorem in the special case where the symmetric pair $(U,\tau)$ is of maximal rank.

\begin{thm}[A real convexity theorem for quasi-hamiltonian actions of compact groups]\label{real_convexity_thm}
Let $U$ be a compact connected simply connected Lie group and let $\qhamsp$ be a quasi-hamiltonian $U$-space with proper momentum map $\mu:M\to U$. Let $\tau:U\to U$ be an involutive automorphism of $U$ such that there exists a maximal torus $T\subset U$ fixed pointwise by the involution $\taum:u\in U\mapsto \tau(u^{-1})$ and let $\clWalc\subset \frakt=Lie(T)$ be a closed Weyl alcove. Let $\beta:M\to M$ be an involution satisfying:
\begin{enumerate}
\item[(i)] $\beta^*\w=-\w$.
\item[(ii)] $\beta(u.x)=\tau(u).\beta(x)$ for all $u\in U$ and all $x\in M$.
\item[(iii)] $\mu\circ \beta=\taum\circ\mu$.
\item[(iv)] $M^{\beta}\not=\emptyset$.
\item[(v)] $\mu(M^{\beta})$ intersects the connected component $Q_0$ of $1$ in $Fix(\taum)$.
\end{enumerate}
Then, for every connected component $L$ of $M^{\beta}$ satisying $\mu(L)\cap Q_0\not=\emptyset$, one has: $$\mu(L)\cap\exp(\clWalc)=\mu(M)\cap\exp(\clWalc).$$ In particular, $\mu(L)\cap\exp(\clWalc)$ is a convex polytope of $\exp(\clWalc)\simeq \clWalc\subset\frakt$, equal to the full momentum polytope $\mu(M)\cap\exp(\Walc)$.
\end{thm}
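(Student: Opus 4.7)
The plan is to reduce Theorem \ref{real_convexity_thm} to an application of the Duistermaat theorem on a suitable symplectic slice, in the same spirit as the Hilgert-Neeb-Plank proof of the O'Shea-Sjamaar theorem carries the Atiyah-Guillemin-Sternberg theorem over to the non-abelian hamiltonian setting. The inputs will be the symplectic slice $N\subset M$ constructed in Section \ref{slice}, together with the local-global principle used in subsection \ref{proof_AMMW}.

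First, I would examine how the involution $\beta$ interacts with the slice $N$. By construction, $N$ is the pre-image under $\mu$ of a carefully chosen cell of $\exp(\clWalc)\subset T$ and it carries a genuine hamiltonian $T$-action with momentum map $\tmu:=\exp^{-1}\circ\mu|_N:N\to\Walc\subset\frakt$, satisfying $\overline{\mu(N)}=\mu(M)\cap\exp(\clWalc)$. The maximal rank hypothesis $T\subset Fix(\taum)$ ensures that this cell is pointwise fixed by $\taum$, so the compatibility relation $\mu\circ\beta=\taum\circ\mu$ yields $\mu(\beta(N))=\mu(N)$, from which one concludes $\beta(N)=N$. The restricted involution $\beta_N:=\beta|_N$ is then anti-symplectic; furthermore, $\taum|_T=\mathrm{id}_T$ is equivalent to saying that $\tau$ acts by inversion on $T$, so $\beta_N(t.x)=t^{-1}.\beta_N(x)$ for all $t\in T$, and $\tmu\circ\beta_N=\tmu$. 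Thus the quadruple $(N,\w|_N,\tmu,\beta_N)$ satisfies hypotheses (i)--(iii) of Theorem \ref{Duist_thm}.

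The main obstacle is hypothesis (iv), i.e.\ the non-emptiness of $N^{\beta_N}\cap L$ for the chosen component $L$ of $M^\beta$. The maximal rank property of $(U,\tau)$ implies that $\exp(\clWalc)\subset T$ is a fundamental domain for the conjugacy action of $K:=(U^\tau)^0$ on $Q_0$, so every element of $Q_0$ is $K$-conjugate to an element of $\exp(\clWalc)$. Assumption (v) provides $q\in \mu(L)\cap Q_0$, hence a $k\in K$ with $kqk^{-1}\in\exp(\clWalc)$. Since $L$ is $K$-stable (for $k\in K\subset Fix(\tau)$ and $x\in L$, one has $\beta(k.x)=\tau(k).\beta(x)=k.x$), there exists $x'\in L$ with $\mu(x')=kqk^{-1}\in\exp(\clWalc)$. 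The explicit description of $N$ as the pre-image of a cell of the alcove, combined with a local normal form around $x'$, then allows one to perturb $x'$ slightly within $L$ so that its image lies in the cell defining $N$, providing a point of $L\cap N\neq\emptyset$. It is this step that truly uses the cell description of $N$ and is, to my mind, the most delicate part of the argument.

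Finally, the hamiltonian $T$-space $(N,\w|_N,\tmu)$ does not have proper momentum map in general, but, as in subsection \ref{proof_AMMW}, the local-global principle of Condevaux-Dazord-Molino and Hilgert-Neeb-Plank allows one to upgrade the local content of Theorem \ref{Duist_thm} into the global statement $\tmu(L_0)=\tmu(N)$ for any connected component $L_0\subset L\cap N$ of $N^{\beta_N}$. Translating back via $\exp$ gives $\mu(L_0)=\mu(N)$. Since $\mu$ is proper on $M$ and $L$ is closed in $M$, the image $\mu(L)\cap\exp(\clWalc)$ is closed in $\exp(\clWalc)$, and one deduces
$$\mu(L)\cap\exp(\clWalc)\supset\overline{\mu(L_0)}=\overline{\mu(N)}=\mu(M)\cap\exp(\clWalc).$$
The reverse inclusion is trivial, so the desired equality follows, and convexity as a polytope then comes from Theorem \ref{AMMW_thm}.
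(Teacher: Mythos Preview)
Your proposal is correct and follows essentially the same route as the paper: restrict $\beta$ to the symplectic slice $N=M_S=\mu^{-1}(\exp(\Walc_S))$, verify the Duistermaat hypotheses there, and use the local-global principle (via the ascending sequence $Z_n$) in lieu of properness to obtain $\mu(N^{\beta})=\mu(N)$, then close up. The one point where the paper is more explicit than your ``perturb $x'$ within $L$ using a local normal form'' is the non-emptiness of $L\cap N$: the paper instead proves that $L_q:=\{x\in L:\dim U.\mu(x)=q\}$ is dense in $L$ (using the maximal rank hypothesis to match maximal $K$-orbit and $U$-orbit dimensions on $Q_0$) and \emph{then} $K$-translates a point of $L_q$ into $\exp(\Walc_S)$ --- the order matters, since perturbing first within $L$ can push $\mu(x')$ out of $\exp(\clWalc)$ altogether.
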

Observe that $\exp(\clWalc)\subset T\subset Fix(\taum)$ and contains $1\in U$. Since $\exp(\clWalc)$ is connected, this implies $\exp(\clWalc)\subset Q_0$. Assumption (v) is of topological nature and accounts for the fact that, in a Lie group, the subset $Fix(\taum)$ is not necessarily connected. When $Fix(\taum)$ is connected (for instance if $U=SU(n)$ and $\tau(u)=\overline{u}$, since in this case every symmetric unitary matrix is of the form $\exp(iS)$ with $S$ a real symmetric matrix), this assumption is automatically satisfied (recall that by compatibility of the $\beta$ with the momentum map one has $\mu(M^{\beta})\subset Fix(\taum)$). In the O'Shea-Sjamaar theorem there is no such assumption because $Fix(\taum)$ is a vector subspace of $\fraku^*$ and is therefore connected. The rest of this paper is devoted to the proof of theorem \ref{real_convexity_thm} and to some applications.\\ Observe that here, in contrast with subsection \ref{OSS}, we do not have an application of theorem \ref{real_convexity_thm} to the spectrum of a product of real orthogonal matrices (which would be a real version of the application of theorem \ref{AMMW_thm} given in subsection \ref{AMMW}). The reason for this is that if $\tau:u\mapsto \overline{u}$ on $U=U(n)$ and $M=\calC_1\times\calC_2$ is a product of two conjugacy classes, if we set $\beta(u_1,u_2)=(\overline{u_1},\overline{u_2})$ we do not have $\mu\circ\beta=\taum\circ\mu$ (for lack of commutativity in $U$). Instead we refer to theorem \ref{applic3} for an application of theorem \ref{real_convexity_thm} that would be a real version of the statement on the possible spectrum of a product of two unitary matrices.

\section{Constructing a symplectic slice in a quasi-hamiltonian space}\label{slice}

In this section, we prove the real convexity result for group-valued momentum maps stated in theorem \ref{real_convexity_thm}. The idea of the proof is to apply the Duistermaat theorem to an appropriate symplectic slice $N\subset M$. The point that requires the most attention is showing that $N^{\beta}\not=\emptyset$. The construction of the symplectic slice itself relies on adapting the ideas of \cite{HNP} to the quasi-hamiltonian setting. In particular, using this approach, we will be able, in subsection \ref{proof_AMMW}, to give a proof of the Alekseev-Malkin-Meinrenken-Woodward convexity theorem.

\subsection{The local-global principle}\label{LKP}

We recall here the results of \cite{HNP} that we will later apply to the symplectic slice $N\subset M$. The following theorem is obtained by using the local normal form theorem for hamiltonian torus actions.

\begin{thm}[Local convexity theorem for hamiltonian torus actions]\cite{HNP}\label{local_convexity_thm}
Let $(N,\w)$ be a symplectic manifold endowed with a hamiltonian action of a
torus $T$ with momentum map $\mu:N\to\mathfrak{t}^*$. Then for every $x\in
N$, there exist an open neighbourhood $\calV_x$ of $x\in N$ and a polyhedral
cone $C_{\mu(x)}\subset\mathfrak{t}^*$ with vertex $\mu(x)$ such that:
\begin{enumerate}
\item[(i)] $\mu: \calV_x \to C_{\mu(x)}$ is an open map. In particular,
$\mu(\calV_x)$ is an open neighbourhood of $\mu(x)$ in $C_{\mu(x)}$.
\item[(ii)] $\mu^{-1}(\{\mu(y)\})\cap\calV_x$ is connected for all $y\in\calV_x$.
\end{enumerate}
If in addition $\beta$ is an antisymplectic involution on $N$
satisfying $\beta(t.x)=t^{-1}.\beta(x)$ and $\mu\circ\beta=\mu$, then
assertion (i) above remains true for the manifold
$N^{\beta}:=Fix(\beta)$ and the \emph{same} cones $C_{\mu(x)}$, for $x\in
N^{\beta}$, that is:
\begin{enumerate}
\item[(iii)] $\mu: \calV_x \cap N^{\beta} \to C_{\mu(x)}$ is an
open map. In particular, $\mu(\calV_x \cap N^{\beta})$ is an open
neighbourhood of $\mu(x)$ in $C_{\mu(x)}$.
\end{enumerate}
\end{thm}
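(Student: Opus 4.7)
The plan is to apply the equivariant Marle--Guillemin--Sternberg local normal form theorem at $x$ and read off all three assertions from the resulting explicit model. First, since $T$ is abelian, the stabilizer $T_x$ is a closed subtorus of $T$, and the normal form produces a $T$-equivariant symplectomorphism between a $T$-invariant neighbourhood of the orbit $T\cdot x$ and a neighbourhood of the zero section in the model space
$$Y \;:=\; T \times_{T_x} \bigl( \frakt_x^{\circ} \oplus V_x\bigr),$$
where $V_x$ is the symplectic slice at $x$ (a symplectic $T_x$-module) and $\frakt_x^{\circ} \subset \frakt^*$ is the annihilator of $\mathrm{Lie}(T_x)$. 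On $Y$, the momentum map has the explicit form
$$\mu\bigl([t, \eta, v]\bigr) \;=\; \mu(x) + \eta + \tfrac{1}{2} \sum_{\alpha} \alpha \, \|v_\alpha\|^2,$$
where the sum runs over the weights $\alpha$ of the $T_x$-action on $V_x$ and $v = \sum v_\alpha$ is the weight decomposition. The image near $\mu(x)$ is exactly the polyhedral cone
$$C_{\mu(x)} \;=\; \mu(x) + \frakt_x^{\circ} + \mathrm{cone}(\alpha_1, \ldots, \alpha_k),$$
with vertex $\mu(x)$. Openness of $\mu$ onto $C_{\mu(x)}$ is immediate from this formula, giving (i); the fibre of $\mu|_{\calV_x}$ over any point is a product of a torus with spheres in the occupied weight spaces and hence connected, giving (ii) after a suitable shrinking of $\calV_x$ around $x$.

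For (iii), the plan is to produce a simultaneously $T$- and $\beta$-equivariant version of the normal form. Since $\beta(x) = x$ and the compatibility $\beta(t.y) = t^{-1}.\beta(y)$ forces $T_{\beta(x)} = T_x$, I expect $\beta$ to transport, on a smaller $T$- and $\beta$-invariant neighbourhood of $x$, to an involution on $Y$ of the form
$$[t, \eta, v] \;\longmapsto\; [t^{-1}, \eta, \beta_0(v)],$$
where $\beta_0$ is an antisymplectic, $T_x$-antiequivariant involution on $V_x$ (the signs being dictated by the conditions $\beta^*\w = -\w$ and $\mu \circ \beta = \mu$ once one tracks how the $\frakt_x^{\circ}$ factor enters the momentum map formula). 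Such a $\beta_0$ necessarily preserves the decomposition of $V_x$ into weight spaces and acts on each weight space as an antiunitary involution whose fixed set is a real form meeting every sphere centered at the origin. Consequently the restriction of $\mu$ to $Y^{\beta}$ projects onto the \emph{same} cone $C_{\mu(x)}$ and remains open, which is precisely (iii).

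The main obstacle is establishing this real equivariant local normal form, i.e.\ a symplectomorphism onto the model that is equivariant for both the torus action and the antisymplectic involution. The classical proof of Marle--Guillemin--Sternberg proceeds by building an equivariant tubular neighbourhood around the orbit $T\cdot x$ and then matching symplectic structures by an equivariant Moser deformation; here one must carry out both the tubular neighbourhood construction and the Moser interpolation compatibly with the finite group $\langle\beta\rangle$ acting antisymplectically alongside the symplectic $T$-action. The key technical point is that the interpolating family of closed $2$-forms used in the Moser argument must be chosen to satisfy $\beta^*\w_s = -\w_s$ for all $s \in [0,1]$, which can be arranged by averaging each $\w_s$ against $-\beta^*$. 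Once this real normal form is in hand, assertion (iii) reduces to the explicit verification on the model carried out above.
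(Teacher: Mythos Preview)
The paper does not give its own proof of this theorem: it is quoted from \cite{HNP}, with the single remark that it ``is obtained by using the local normal form theorem for hamiltonian torus actions'' and a reference to \cite{HNP} for the description of the cones $C_{\mu(x)}$. Your proposal is precisely a fleshed-out version of that indicated approach --- the Marle--Guillemin--Sternberg model for (i)--(ii), and a $\beta$-equivariant refinement of the same model for (iii) --- and it is correct in outline, so there is nothing to compare against in the paper beyond the one-line pointer to the normal form.

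Two small points worth tightening. First, in your momentum formula the weights $\alpha$ of the $T_x$-action lie in $\frakt_x^*$, not in $\frakt^*$; you are implicitly using a splitting $\frakt^* = \frakt_x^{\circ} \oplus \frakt_x^*$ to regard them as elements of $\frakt^*$, and it is worth saying so. Second, your description of the fibre as ``a torus times spheres'' omits the possible zero-weight subspace of $V_x$ (on which $T_x$ acts trivially); the fibre then acquires an additional Euclidean factor, which is still connected, so (ii) is unaffected. With these adjustments your sketch matches the standard proof in \cite{HNP} and in Duistermaat's original paper.
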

For additional local properties, including a description of the
cones $C_{\mu(x)}$ using the local normal form of the
action, we refer to \cite{HNP}. Conditions (ii) and (iii) above play a special role when it comes
to convexity considerations insofar as they make it possible to obtain a global
result from a local one (see theorem \ref{LKP_thm}), which justifies the
following definition:
\begin{defi}[Local convexity data]\label{localconvexitydata}
Let $X$ be a \emph{connected} Hausdorff space, and let $V$ be a finite
dimensional vector space. Consider a continuous map $\psi:X\to V$. It is said
that $\psi$ \emph{gives rise to local convexity data}
$(\calV_x,C_{\psi(x)})_{x\in X}$ if for any $x\in X$ there exist an open
neighbourhood $\calV_x$ of $x$ in $X$ and a convex cone $C_{\psi(x)}\subset
V$ with vertex $\psi(x)$ such that:
\begin{enumerate}
\item[(O)] $\psi:\calV_x\to C_{\psi(x)}$ is an open map.
\item[(LC)] $\psi^{-1}(\{\psi(y)\})\cap\calV_x$ is connected for all $y\in
\calV_x$.
\end{enumerate}
A map $\psi:X\to V$ satisying condition (LC) alone is said to be
\emph{locally fibre-connected}.
\end{defi}
As an example, theorem \ref{local_convexity_thm} shows that the momentum map of a hamiltonian torus action gives rise to local convexity data. We then have:
\begin{thm}[Local-global principle]\emph{\cite{HNP}}\label{LKP_thm}
Let $\psi:X\to V$ be a map giving rise to local convexity data
$(\calV_x,C_{\mu(x)})_{x\in X}$ and assume that $\psi$ is a proper map. Then
$\psi(X)$ is a closed locally polyhedral convex subset of $V$, the fibres
$\psi^{-1}(\{v\})$ are connected for all $v\in V$, $\psi:X\to \psi(X)$ is an
open map and $C_{\mu(x)}=\psi(x) + \overline{\R^+.(\psi(X)\bs\{\psi(x)\})}$.
\end{thm}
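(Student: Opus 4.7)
The plan is to establish the four conclusions in an order that lets each feed the next: closedness of $\psi(X)$, connectedness of the fibres, openness of $\psi:X\to\psi(X)$, and finally convexity together with the tangent-cone formula. First, $\psi(X)$ is closed in $V$ because $\psi$ is proper and continuous (hence a closed map into a locally compact Hausdorff target), and it is connected since $X$ is.

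For fibre-connectedness I would argue by an open--closed partition of $\psi(X)$. Set $Y=\{v\in\psi(X):\psi^{-1}(\{v\})\text{ is connected}\}$. The fibre $F_v=\psi^{-1}(\{v\})$ is compact by properness, so it is covered by finitely many neighborhoods $\calV_{x_1},\dots,\calV_{x_n}$ supplied by the local convexity data, and each $\calV_{x_i}\cap F_v$ is connected by (LC). To show $Y$ is open at $v\in Y$, properness yields a neighborhood $W$ of $v$ with $\psi^{-1}(W)\subset\bigcup_i\calV_{x_i}$, so the combinatorial overlap pattern gluing $F_v$ into a connected set persists on $F_{v'}$ for $v'\in W$. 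To show $Y$ is closed, if $v_n\to v$ with $v_n\in Y$ and $F_v=A\sqcup B$ with $A,B$ nonempty closed disjoint, properness would yield disjoint saturated open neighborhoods of $A$ and $B$ covering some $F_{v_n}$, contradicting its connectedness. Since $\psi(X)$ is connected, $Y=\psi(X)$.

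For openness of $\psi$ onto $\psi(X)$, condition (O) gives that $\psi(\calV_x)$ is open in $C_{\psi(x)}$, so the real task is to show $\psi(\calV_x)$ is a relative neighborhood of $\psi(x)$ in the whole image $\psi(X)$. If $\psi(y_k)\to\psi(x)$ with $y_k\notin\calV_x$, properness extracts a subsequential limit $y_\infty\in F_{\psi(x)}$; connectedness of $F_{\psi(x)}$ (just proved) combined with (LC) lets me chain $y_\infty$ to $x$ through a finite overlap of neighborhoods $\calV_{z_j}$, forcing $y_k\in\calV_x$ for $k$ large, a contradiction. With $\psi(\calV_x)$ a relative neighborhood of $\psi(x)$ in $\psi(X)$, $\psi$ is open onto $\psi(X)$, and $\psi(X)$ sits locally inside $C_{\psi(x)}$ at each point.

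For convexity, given $p,q\in\psi(X)$, I would consider the closed nonempty set $T=\{t\in[0,1]:p+t(q-p)\in\psi(X)\}$ and show it is open in $[0,1]$. Given $t^*\in T$ with $t^*<1$, set $v^*=p+t^*(q-p)$ and pick $x\in\psi^{-1}(\{v^*\})$; since $\psi(\calV_x)$ is a neighborhood of $v^*$ in both $\psi(X)$ and $C_{v^*}$, locally the two coincide near $v^*$, and the ray from $v^*$ toward $q$ enters $C_{v^*}$ because $q\in\psi(X)$ together with the local cone structure forces $q-v^*\in\overline{\R^+(\psi(X)-v^*)}\subset C_{v^*}-v^*$. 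Hence $T=[0,1]$ and $\psi(X)$ is convex; local polyhedrality is then inherited directly from the local cones. The identity $C_{\psi(x)}=\psi(x)+\overline{\R^+(\psi(X)\setminus\{\psi(x)\})}$ follows because both sides are closed convex cones at $\psi(x)$ sharing a common relative neighborhood in $\psi(X)$. The main obstacle throughout is the interplay between the a priori local cone $C_{\psi(x)}$ and the actual image $\psi(X)$: propagating the local cone structure globally, in particular showing that $\psi(\calV_x)$ is a neighborhood of $\psi(x)$ \emph{in} $\psi(X)$, is the single technical point on which both openness and convexity ultimately rest.
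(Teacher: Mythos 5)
The paper does not supply a proof of Theorem~\ref{LKP_thm}; it is quoted directly from Hilgert--Neeb--Plank~\cite{HNP}. So the comparison has to be against the internal correctness of your argument, and there are real gaps.

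Your overall plan and your identification of the crux (that $\psi(\calV_x)$ must be shown to be a \emph{relative} neighbourhood of $\psi(x)$ in $\psi(X)$, not merely in $C_{\psi(x)}$) are sound. But the execution has three problems. (a) In the open--closed argument for fibre-connectedness, ``the combinatorial overlap pattern persists'' is exactly the hard step and does not come for free: for $v'$ near $v$ one only has $\calV_{x_i}\cap F_{v'}\not=\emptyset$ when $v'\in C_{\psi(x_i)}$, which is not given for an arbitrary $v'\in\psi(X)$. The missing lemma is that all the cones $C_{\psi(x)}$ for $x$ in a fixed connected fibre $F_v$ coincide (any two share the open neighbourhood $\psi(\calV_{x}\cap\calV_{x'})$ of the vertex $v$, and a convex cone is determined by any neighbourhood of its vertex), whence $\psi(X)$ near $v$ lies in this common cone $C_v$ and the overlaps persist. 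So the relative-neighbourhood property cannot be placed \emph{after} fibre-connectedness as a separate step; it must be threaded through the induction. You also do not address nonemptiness of $Y$, and in the closedness step you do not justify that $F_{v_n}$ meets \emph{both} saturated neighbourhoods rather than, say, only the one around $A$. (b) In the openness step, the chain from $y_\infty$ to $x$ through $F_{\psi(x)}$ yields the cone equality $C_{\psi(y_\infty)}=C_{\psi(x)}$; combined with $\psi(y_k)\in\psi(\calV_{y_\infty})\subset C_{\psi(y_\infty)}$ and $\psi(y_k)\to\psi(x)$, it forces $\psi(y_k)\in\psi(\calV_x)$ for large $k$. It does not force $y_k\in\calV_x$, which is what you assert and is generally false.

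(c) Most seriously, the convexity argument is circular. You argue that the ray from $v^*$ toward $q$ lies in $C_{v^*}$ because $q-v^*\in\overline{\R^+.(\psi(X)-v^*)}\subset C_{v^*}-v^*$, but the second inclusion \emph{is} the hard half of the cone identity $C_{v^*}=v^*+\overline{\R^+.(\psi(X)\bs\{v^*\})}$ that the theorem asserts, and nothing you prove earlier shows that $\psi(X)$ as a whole sits inside $C_{v^*}$ (the relative-neighbourhood property only gives agreement \emph{near} $v^*$). The way around this, which is what HNP does and what is recorded in this paper as Corollary~\ref{real_Prinzip}, is to reduce to the purely geometric statement that a closed connected subset of $V$ that agrees near each of its points with a convex cone at that point is convex, with $C_v=v+\overline{\R^+.(P\bs\{v\})}$. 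That statement needs its own (nontrivial) argument; until it or an equivalent is in hand, the convexity, the local polyhedrality, and the cone formula remain unproved.
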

In particular, we see that theorem \ref{AGS_thm} immediately follows
from theorems \ref{local_convexity_thm} and \ref{LKP_thm}. In order to obtain a Duistermaat-like statement for our symplectic slice $N\subset M$, we will need the following two results, the first of which is a corollary of the local-global principle (these two results will be used in the proof of proposition \ref{Duist-like}).
\begin{cor}\emph{\cite{HNP}}\label{real_Prinzip}
Let $V$ be a finite dimensional vector space and let $P\subset V$ be a
closed connected subset of $V$ such that for all $x\in P$, there exists a
neighbourhood $\calO_v$ of $v$ in $V$ and a cone $C_v\subset V$ with vertex
$v$ such that $\calO_v\cap P=O_v\cap C_v$. Then $P$ is a convex subset of
$V$ and for all $v\in V$, $C_v=v+\overline{\R^+.(P\bs\{v\})}$. 
\end{cor}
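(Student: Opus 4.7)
The plan is to apply the local-global principle (Theorem \ref{LKP_thm}) to the inclusion map $\psi : P \hookrightarrow V$. Both conclusions of the corollary then fall out directly: convexity of $P = \psi(P)$ and the conical description $C_v = v + \overline{\R^+.(P\bs\{v\})}$ are exactly what Theorem \ref{LKP_thm} delivers, once its hypotheses are verified for $\psi$.

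Three things must be checked. First, $P$ is connected by assumption. Second, the inclusion $\psi : P\to V$ is proper: for any compact $K\subset V$, the preimage $\psi^{-1}(K) = P\cap K$ is a closed subset of $K$ (because $P$ is closed in $V$), hence compact. Third, $\psi$ gives rise to local convexity data in the sense of Definition \ref{localconvexitydata}, using the cones $C_v$ supplied by the hypothesis. For each $v\in P$, set $\calV_v := \calO_v\cap P$; this is an open neighbourhood of $v$ in $P$, and the hypothesis gives $\psi(\calV_v) = \calO_v\cap C_v$. Condition (LC) is automatic, since the fibres of $\psi$ are singletons and hence connected. For condition (O), observe that $\calV_v = \calO_v\cap C_v$ is open in $C_v$ (being the intersection of $C_v$ with an open set of $V$); more generally, any open subset of $\calV_v$ in the subspace topology from $P$ has the form $W\cap P$ with $W$ open in $V$, which equals $W\cap C_v \cap \calV_v$ and is therefore open in $C_v$. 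Thus $\psi|_{\calV_v}:\calV_v\to C_v$ is open.

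With the hypotheses verified, Theorem \ref{LKP_thm} applied to $\psi$ immediately yields that $P = \psi(P)$ is a closed locally polyhedral convex subset of $V$ and that, for every $v\in P$, one has $C_v = \psi(v) + \overline{\R^+.(\psi(P)\bs\{\psi(v)\})} = v + \overline{\R^+.(P\bs\{v\})}$, which is exactly the claim. There is essentially no substantive obstacle: the corollary is the tautology that a closed connected set which is locally conic at each of its points must be globally convex, and all the hard work has been done inside the local-global principle. The only minor point requiring care is the implicit convention that the cones $C_v$ in the hypothesis are convex cones, which is the standard convention for local convexity data; if one wished to avoid this convention, one could replace each $C_v$ by its convex hull after shrinking $\calO_v$ slightly, since the local equality $\calO_v\cap P = \calO_v\cap C_v$ already forces $C_v$ to be convex near its vertex.
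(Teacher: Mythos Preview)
Your proof is correct and follows exactly the approach the paper indicates: the paper does not spell out a proof for this corollary but simply presents it as a consequence of the local-global principle (Theorem \ref{LKP_thm}), which is precisely what you do by applying Theorem \ref{LKP_thm} to the inclusion $\psi:P\hookrightarrow V$. Your verification of properness (from closedness of $P$), of condition (LC) (trivial for an injection), and of condition (O) (from the local identity $\calO_v\cap P=\calO_v\cap C_v$) is clean and complete, and your remark about the cones being convex matches the convention in Definition \ref{localconvexitydata}.
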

\begin{lem}\emph{\cite{HNP}}\label{inclusion_de_deux_convexes}
If $P_1\subset P_2$ is an inclusion between two convex subsets of a
finite-dimensional vector space satisfying, for all $v\in P_1$ the condition
$v+\overline{\R^+.(P_1\bs\{v\})}=v+\overline{\R^+.(P_2\bs\{v\})}$, then
$P_1=P_2$.
\end{lem}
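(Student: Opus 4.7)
The plan is to argue by contradiction using a supporting hyperplane of $P_1$. Assuming $P_1\subsetneq P_2$, I will exhibit a boundary point $v^*\in P_1$ and a nonzero linear functional $\ell$ on $V$ that is nonpositive on $\overline{\R^+\cdot(P_1\bs\{v^*\})}$ but strictly positive at some vector of $\overline{\R^+\cdot(P_2\bs\{v^*\})}$, directly contradicting the assumed equality of these two cones.

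A preliminary reduction is what makes the hyperplane argument possible. Pick any $v$ in the relative interior of $P_1$: then every direction in $\mathrm{aff}(P_1)-v$ points into $P_1$ for a small enough positive multiple, so $\overline{\R^+\cdot(P_1\bs\{v\})}$ coincides with the full linear subspace $\mathrm{aff}(P_1)-v$. The hypothesis then forces
$$P_2\subset v+\overline{\R^+\cdot(P_2\bs\{v\})}=v+\overline{\R^+\cdot(P_1\bs\{v\})}=\mathrm{aff}(P_1),$$
hence $\mathrm{aff}(P_1)=\mathrm{aff}(P_2)$. Replacing the ambient $V$ by this common affine hull, I may assume that $P_1$ has nonempty interior. (I work throughout with closed convex sets, which is the intended setting in the paper's applications; an open disk strictly contained in the corresponding closed disk shows that some closedness assumption is genuinely needed for the conclusion.)

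Now suppose for contradiction that there exists $w\in P_2\bs P_1$, and fix any $v\in\mathrm{int}(P_1)$. Set $t^*:=\sup\{t\in[0,1]\,:\,v+t(w-v)\in P_1\}$; the three conditions $v\in\mathrm{int}(P_1)$, $w\notin P_1$ and $P_1$ closed yield $t^*\in(0,1)$ and $v^*:=v+t^*(w-v)\in\partial P_1$. Choose a nonzero linear functional $\ell$ supporting $P_1$ at $v^*$, i.e.\ with $\ell(v^*)=0$ and $\ell\leq 0$ on $P_1$; since $v\in\mathrm{int}(P_1)$ one has $\ell(v)<0$, and the identity $0=\ell(v^*)=\ell(v)+t^*\ell(w-v)$ gives $\ell(w-v)=-\ell(v)/t^*>0$.

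The contradiction is then immediate. The vector $w-v^*=(1-t^*)(w-v)$ lies in $P_2-v^*\subset\overline{\R^+\cdot(P_2\bs\{v^*\})}$ and satisfies $\ell(w-v^*)>0$. On the other hand $\ell\leq 0$ on $P_1-v^*$ (by the choice of $\ell$ and because $\ell(v^*)=0$), and by positive homogeneity and continuity this extends to $\ell\leq 0$ on the whole of $\overline{\R^+\cdot(P_1\bs\{v^*\})}$; the hypothesis identifies this cone with $\overline{\R^+\cdot(P_2\bs\{v^*\})}$, forcing $\ell(w-v^*)\leq 0$ — the desired contradiction. The only genuinely subtle step is the initial reduction, which upgrades the pointwise tangent-cone information at relatively interior points of $P_1$ to global affine-hull information, and is precisely what makes supporting hyperplanes of $P_1$ available at its boundary points for the concluding argument.
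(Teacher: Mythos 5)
Your proof is correct, but it takes a different, more hands-on route than the paper's. The paper proves the lemma in two lines by invoking the standard fact that a closed convex set $C$ equals the intersection, over its own points $v$, of the translated tangent cones $v + \overline{\R^+.(C\setminus\{v\})}$; applying this identity to both $P_1$ and $P_2$ and using the hypothesis to swap the $P_1$-cones for the $P_2$-cones yields at once
\[
P_1 = \bigcap_{v\in P_1}\bigl(v + \overline{\R^+.(P_1\setminus\{v\})}\bigr) = \bigcap_{v\in P_1}\bigl(v + \overline{\R^+.(P_2\setminus\{v\})}\bigr) \supset \bigcap_{v\in P_2}\bigl(v + \overline{\R^+.(P_2\setminus\{v\})}\bigr) = P_2.
\]
You instead re-derive the separating content of this fact from scratch: you locate a boundary point $v^*$ of $P_1$ on a segment from an interior point to a putative $w\in P_2\setminus P_1$ and pick a supporting hyperplane there. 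Because your argument needs a genuine boundary point of $P_1$ whose supporting hyperplane can actually separate $w$, you need the preliminary reduction to a common affine hull, which the paper's argument sidesteps entirely since the tangent-cone identity is dimension-agnostic. Your version is more elementary and self-contained, at the cost of the extra reduction and some length; the paper's is a one-step corollary of a fact it cites. Both arguments, as you correctly note, implicitly assume $P_1$ and $P_2$ closed --- the tangent-cone identity used in the paper is false for non-closed convex sets, and your open-disk example shows that the lemma itself needs some such hypothesis --- so this omission lives in the lemma's statement rather than in either proof.
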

\begin{proof}
A convex set is the intersection of cones containing it, so that:
$$P_1=\bigcap_{v\in P_1} \big(v+\overline{\R^+.(P_1\bs\{v\})}\big) =
\bigcap_{v\in P_1} \big(v+\overline{\R^+.(P_2\bs\{v\})}\big) \supset
\bigcap_{v\in P_2} \big(v+\overline{\R^+.(P_2\bs\{v\})}\big) = P_2$$ hence
$P_1=P_2$.
\end{proof}
There is one more difficulty one is faced with when trying to apply theorem \ref{AGS_thm} or \ref{Duist_thm} to a symplectic slice. As mentioned in subsection \ref{AGS}, symplectic slices are non-compact manifolds (which in fact motivated the search for a strengthened version of the Atiyah-Guillemin-Sternberg and Duistermaat theorems in \cite{CDM,HNP}) and in addition to that, the momentum map $\mu|_N$ is not a proper map. In subsection \ref{proof_AMMW}, we will see how to circumvent this difficulty by applying the following result:
\begin{prop}\emph{\cite{HNP}}\label{ascending_sequence}
Let $\psi:X\to V$ be a map giving rise to local convexity data
$(\calV_x,C_{\psi(x)})_{x\in X}$. Consider any closed locally polyhedral
convex subset $D\subset V$ and set $Y:=\psi^{-1}(D)\subset X$. Then
$\psi|_Y:Y\to V$ gives rise to local convexity data
$(\calV_y,C_{\psi(y)}\cap\overline{\R^+.(D \bs \{\psi(y)\}) })_{y\in Y}$.
\end{prop}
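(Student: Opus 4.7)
The plan is to verify, at each $y\in Y$, the two conditions (O) and (LC) of Definition \ref{localconvexitydata} for the restricted map $\psi|_Y:Y\to V$, with candidate neighbourhood $\calV_y\cap Y$ (possibly shrunk, which is allowed since Definition \ref{localconvexitydata} only requires the existence of some such data) and candidate cone $C_v\cap\overline{\R^+.(D\bs\{v\})}$, where $v:=\psi(y)$. Note that the intersection of two closed convex cones with common vertex $v$ is again a closed convex cone with vertex $v$, so the candidate target has the correct shape.

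Condition (LC) is essentially automatic. For any $z\in\calV_y\cap Y$ one has $\psi(z)\in D$, so the whole $\psi$-fibre $\psi^{-1}(\{\psi(z)\})$ lies in $Y=\psi^{-1}(D)$, and therefore
$$(\psi|_Y)^{-1}(\{\psi(z)\})\cap(\calV_y\cap Y)\;=\;\psi^{-1}(\{\psi(z)\})\cap\calV_y,$$
which is connected by the (LC) assumption on $\psi$. For condition (O), the elementary identity $\psi(W\cap Y)=\psi(W)\cap D$, valid for any $W\subset\calV_y$, shows that $\psi|_Y(\calV_y\cap Y)=\psi(\calV_y)\cap D$; by openness of $\psi:\calV_y\to C_v$, this is an open neighbourhood of $v$ in $C_v\cap D$. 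To replace $C_v\cap D$ by the cone $C_v\cap\overline{\R^+.(D\bs\{v\})}$ that appears in the statement, one invokes the fact that $D$ is locally polyhedral convex: exactly as in the hypothesis of Corollary \ref{real_Prinzip}, this means that there exists a neighbourhood $\calO_v$ of $v$ in $V$ with
$$\calO_v\cap D\;=\;\calO_v\cap\bigl(v+\overline{\R^+.(D\bs\{v\})}\bigr).$$
After shrinking $\calV_y$ to $\calV_y\cap\psi^{-1}(\calO_v)$, the image of any open subset $W\subset\calV_y\cap Y$ under $\psi|_Y$ is then automatically an open subset of $C_v\cap\overline{\R^+.(D\bs\{v\})}$, by the same set-theoretic identity together with the openness of $\psi$ onto $C_v$.

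The only subtle point I expect is the local-polyhedrality step, that is, the local identification of $D$ with its tangent cone $v+\overline{\R^+.(D\bs\{v\})}$ at $v$; but this is literally the characterisation of a locally polyhedral convex set used in Corollary \ref{real_Prinzip}, so it is granted by hypothesis. Once this is in hand, the rest is a mechanical transfer of the axioms of Definition \ref{localconvexitydata} by restriction, and no genuine obstacle is anticipated: the proposition is really a functorial statement expressing that restricting a map with local convexity data to the preimage of a locally polyhedral convex target produces a new map whose local cones are obtained by intersecting the old ones with the tangent cones of~$D$.
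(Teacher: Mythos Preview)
The paper does not give its own proof of this proposition: it is quoted from \cite{HNP} without argument, like the other results in subsection~\ref{LKP}. So there is no in-paper proof to compare against.

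Your argument is correct and is the standard verification. The set-theoretic identity $\psi(W\cap Y)=\psi(W)\cap D$ for $W\subset\calV_y$ is valid (both inclusions are immediate from $Y=\psi^{-1}(D)$), and it makes (LC) automatic and reduces (O) to the openness of $\psi:\calV_y\to C_v$ together with the local identification of $D$ with its tangent cone at $v=\psi(y)$. The latter is exactly what ``closed locally polyhedral convex'' means, and it is also the hypothesis appearing in Corollary~\ref{real_Prinzip}, so your appeal to it is legitimate. One small remark: Definition~\ref{localconvexitydata} requires the domain to be connected, while $Y=\psi^{-1}(D)$ need not be; the paper is aware of this and, when it uses the proposition (proof of Lemma~\ref{conveximage}), passes immediately to connected components $Z_n\subset Y_n$ before invoking the local-global principle. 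Your verification of (O) and (LC) is local and is unaffected by this.
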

Finally, we quote the following result from \cite{Benoist}. It is a purely topological argument that we will use in subsection \ref{proof_AMMW}. We refer to \cite{Benoist} for a proof and comments on the fact that the assumption that $f(\Omega)$ be convex is necessary.
\begin{lem}\cite{Benoist}\label{connexite_des_fibres}
Let $X$ be a metric space and $\Omega$ be an open dense subset of $M$. Let $f:X\to \R^d$ be a proper continuous map. Assume that the image $f(\Omega)$ is a convex subset of $\R^d$ and that for all $x\in \Omega$, the fibre $f^{-1}(\{f(x)\})$ is connected. Then all the fibres of $f$ are connected.
\end{lem}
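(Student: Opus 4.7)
The plan is to show that $f^{-1}(y)$ is connected for every $y\in f(X)$. Set $P:=f(\Omega)$, which by hypothesis is convex. Since $f$ is proper and continuous with locally compact Hausdorff target, $f$ is a closed map, so $f(X)$ is closed in $\R^d$; combined with $P\subset f(X)\subset \overline{f(\Omega)}=\overline{P}$ (from continuity and $\overline{\Omega}=X$), this gives $f(X)=\overline{P}$. For $y\in P$ the fibre is connected by hypothesis, so it remains to handle the boundary case $y\in \overline{P}\setminus P$.

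I proceed by contradiction. Write $f^{-1}(y)=A\sqcup B$ as a disjoint union of non-empty closed subsets, and, using compactness of $f^{-1}(y)$ (by properness), choose disjoint open neighbourhoods $U_A\supset A$ and $U_B\supset B$ with $\overline{U_A}\cap\overline{U_B}=\emptyset$. Closedness of $f$ applied to $X\setminus(U_A\cup U_B)$ yields an $\epsilon>0$ with $f^{-1}(B(y,\epsilon))\subset U_A\cup U_B$. For each $z\in P\cap B(y,\epsilon)$ the fibre $f^{-1}(z)$ is connected (as $z\in P$) and contained in the disjoint open union $U_A\sqcup U_B$, so it lies entirely in one of them; set
$$P_A:=\{z\in P\cap B(y,\epsilon):f^{-1}(z)\subset U_A\},\quad P_B:=\{z\in P\cap B(y,\epsilon):f^{-1}(z)\subset U_B\}.$$
A second application of closedness of $f$, to $X\setminus U_A$ and $X\setminus U_B$, shows that $P_A$ and $P_B$ are both open in $P\cap B(y,\epsilon)$, and by construction $P\cap B(y,\epsilon)=P_A\sqcup P_B$.

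Convexity is decisive here: $P\cap B(y,\epsilon)$ is the intersection of two convex sets and is non-empty (since $y\in\overline{P}$), hence connected, so one of $P_A$, $P_B$ must be empty; say $P_B=\emptyset$. The density of $\Omega$ then closes the argument. Pick $b\in B$ and $x_n\in\Omega$ with $x_n\to b$; for $n$ large, $x_n\in U_B$ (since $U_B$ is open and $b\in U_B$), while $f(x_n)\in P\cap B(y,\epsilon)=P_A$ (since $f(x_n)\to y$ and $f(x_n)\in P$), so $x_n\in f^{-1}(P_A)\subset U_A$, contradicting $U_A\cap U_B=\emptyset$. The step I expect to require the most care is this coupling at the ``local'' scale $B(y,\epsilon)$: properness supplies the separating ball, convexity of $P$ upgrades $P\cap B(y,\epsilon)$ to a connected object, the fibre-connectedness hypothesis permits a coherent colouring of nearby fibres by $A$-side versus $B$-side, and density of $\Omega$ is precisely what transports this colouring back onto $f^{-1}(y)$ itself to produce the contradiction.
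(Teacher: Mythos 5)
The paper itself does not supply a proof of this lemma --- it is quoted from Benoist's paper \cite{Benoist} and the author simply refers the reader there --- so there is no ``paper proof'' to compare against. Your self-contained argument is correct and complete. All the delicate points are handled: properness of $f$ together with local compactness of $\R^d$ and the metric space structure on $X$ gives that $f$ is closed, which you invoke twice (once to produce the separating ball $B(y,\epsilon)$ and once to show that $P_A$ and $P_B$ are relatively open in $P\cap B(y,\epsilon)$); compactness of the fibre $f^{-1}(\{y\})$ lets you choose the disjoint neighbourhoods $U_A$, $U_B$ with disjoint closures; convexity of $P=f(\Omega)$ is used exactly once, to guarantee that $P\cap B(y,\epsilon)$ is connected and hence cannot be partitioned into the two nonempty open pieces $P_A$, $P_B$; and density of $\Omega$ transports the resulting one-sided colouring back to the fibre $f^{-1}(\{y\})$ to produce the contradiction. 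You also correctly identify where the convexity hypothesis is decisive, which is consistent with the paper's remark that this hypothesis cannot be dropped. One small wording point: the lemma as printed says ``$\Omega$ be an open dense subset of $M$'' but clearly means ``of $X$''; you read it correctly as $X$.
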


\subsection{Construction of a symplectic slice}\label{slice_construction}

The purpose of this subsection is to prove the following result:
\begin{thm}[Existence of a symplectic slice]\label{symp_slice}
Let $U$ be a compact connected simply connected Lie group and let
$(M,\w,\mu:M\to U)$ be a connected quasi-hamiltonian $U$-space with proper momentum
map $\mu$. Let $T\subset U$ be a maximal torus in $U$, let
$\overline{\mathcal{W}}\subset\mathfrak{t}=Lie(T)$ be the closure of a Weyl
alcove and let $p:U\to U/\Int(U)$ be the projection from $U$ to the set
of its conjugacy classes. Recall that the exponential map $\exp : \mathfrak{t}
\to T$ induces a
homeomorphism $\overline{\mathcal{W}}\overset{\simeq}{\longto} U/\Int(U)$.
Then, there exists a submanifold $N\subset M$ such that:
\begin{enumerate}
\item[(i)] $N$ is connected.
\item[(ii)]$N$ is $T$-stable.
\item[(iii)] $\w|_N$ is a symplectic form.
\item[(iv)] the action of $T$ on $N$ is hamiltonian with momentum map the
map $$\widetilde{\mu}:=p\circ\mu|_N : N \longto U/\Int(U)
\simeq \overline{\mathcal{W}} \subset \mathfrak{t}.$$
\item[(v)] The set $U.N:=\{u.x\, :\,x\in N, u\in U\}$ is dense in $M$, and
the set $\widetilde{\mu}(N)$ is dense in $\tilde{\mu}(M)$.
\end{enumerate}
\end{thm}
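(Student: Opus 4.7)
The plan is to take $N$ to be the preimage under $\mu$ of the image of the \emph{open} Weyl alcove. Let $\Walc$ denote the interior of $\clWalc$ in $\frakt$, and set $N := \mu^{-1}(\exp(\Walc))$. Property (ii) is then immediate from $U$-equivariance of $\mu$, since conjugation by $T$ acts trivially on $\exp(\Walc)\subset T$; and the candidate momentum map in (iv) is
$\tmu := \exp^{-1}\circ\mu|_N : N \to \Walc\subset\frakt$, which matches $p\circ\mu|_N$ under the homeomorphism $U/\Int(U)\simeq\clWalc$ since $\exp|_{\Walc}$ is a diffeomorphism onto its image.

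To get a submanifold structure, I would verify that $\mu$ is transverse to $\exp(\Walc)\subset U$ at each $x\in N$. Fixing an $\mathrm{Ad}(T)$-invariant complement $\mathfrak{q}$ of $\frakt$ in $\fraku$, the $U$-equivariance of $\mu$ yields, via right-trivialization of $T_{\mu(x)}U\simeq\fraku$,
\[ d\mu_x(\xi_M(x)) = \xi - \mathrm{Ad}(\mu(x))\xi,\qquad \xi\in\fraku. \]
Since $\mu(x)$ is regular in $T$, the operator $\mathrm{Id}-\mathrm{Ad}(\mu(x))$ is invertible on $\mathfrak{q}$, which simultaneously gives transversality (so $N$ is a submanifold of codimension $\dim\mathfrak{q}$) and the decomposition $T_xM = T_xN\oplus\mathfrak{q}\cdot x$. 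Closedness of $\w|_N$ (part of (iii)) is automatic: the qH identity $d\w = -\mu^*\chi$ with $\chi$ the Cartan $3$-form on $U$ restricts to zero on $N$, because $\chi$ vanishes on the abelian group $T$. For non-degeneracy, the key cancellation is that for $v\in T_xN$ and $\xi\in\mathfrak{q}$, the qH moment map axiom combined with $d\mu_x(v)\in\frakt$ and $\frakt\perp\mathfrak{q}$ (for an invariant inner product on $\fraku$) gives $\w(\xi_M(x),v) = \tfrac12\,\mu^*(\theta^L+\theta^R,\xi)(v) = 0$. Hence any $v$ in the $\w|_N$-radical lies in $\ker\w_x$; the qH non-degeneracy axiom then forces $v$ to be the infinitesimal action of some $\xi\in\mathfrak{q}$ with $\mathrm{Ad}(\mu(x))\xi=-\xi$, which cannot lie in $T_xN$ (again by regularity, since $d\mu_x(\xi_M(x))=2\xi\in\mathfrak{q}$). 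Property (iv) then follows by specializing the qH moment map axiom to $\xi\in\frakt$: since $\theta^L|_T = \theta^R|_T = d(\exp^{-1})$ on $T$, one obtains $\iota_{\xi_M}(\w|_N) = d\tmu^\xi$.

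For (v), I would identify $U\cdot N$ with $\mu^{-1}(U^{reg})$, the preimage of the regular locus of $U$; this is open in $M$ and its density follows from a standard principal-stratum argument, using the local normal form for quasi-hamiltonian actions. Density of $\tmu(N)=\mu(M)\cap\exp(\Walc)$ in $\tmu(M)=\mu(M)\cap\exp(\clWalc)$ then follows from the openness of $\Walc$ in $\clWalc$ together with local openness of $\tmu$. The hardest step I anticipate is (i), connectedness of $N$: the standard hamiltonian argument uses connectedness of generic $\mu$-fibres, which in turn invokes AGS/KGS convexity, and here the analogous AMM convexity statement is precisely what this construction is meant to help prove, so any direct appeal would be circular. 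I would therefore either establish local fibre connectedness via the quasi-hamiltonian local normal form and globalize it through the local-global principle of subsection \ref{LKP}, or simply replace $N$ by a connected component, verifying that (ii)--(v) persist under this restriction.
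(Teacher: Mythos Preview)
Your choice $N = \mu^{-1}(\exp(\Walc))$ with $\Walc$ the \emph{open} alcove can be empty. Take for instance $M = \mathcal{C}$ a conjugacy class of a non-regular element of $U$, with $\mu$ the inclusion: then $\mu(M) \cap U^{reg} = \emptyset$, so your $N$ is empty and every subsequent step (including the density claim in (v)) collapses. More generally, nothing in the quasi-hamiltonian axioms forces $\mu(M)$ to meet the regular locus of $U$; the ``standard principal-stratum argument'' you invoke for density of $\mu^{-1}(U^{reg})$ in $M$ is simply false here, because the moment map can send $M$ entirely into a lower conjugacy-class stratum of $U$. Likewise, your density argument for $\tmu(N)$ in $\tmu(M)$ (``openness of $\Walc$ in $\clWalc$ together with local openness of $\tmu$'') presupposes that $\mu(M)\cap\exp(\clWalc)$ is not entirely contained in a proper face.

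The paper handles this by defining $q := \max_{x\in M}\dim U.\mu(x)$ and proving that $M_q := \mu^{-1}(\Sigma_q)$ is open, connected and dense in $M$ (lemmas \ref{localversion} and \ref{Mq}). It then shows that among the cells $\Walc_{S^{(i)}}$ of $\clWalc$ satisfying $\dim U - \dim U_{S^{(i)}} = q$, exactly one has non-empty preimage (lemma \ref{MS}); the slice is $N = M_S := \mu^{-1}(\exp(\Walc_S))$ for that unique cell. Your transversality, closedness, non-degeneracy and Hamiltonian computations then carry over essentially unchanged, with the splitting $T_{\mu(x)}\Sigma_q = T_{\mu(x)}(U.\mu(x)) \oplus T_{\mu(x)}\exp(\Walc_S)$ (proposition \ref{Sigma_j}) playing the role that regularity of $\mu(x)$ plays in your argument. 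As for connectedness, the paper proves it directly (lemma \ref{MSconnexe}) from connectedness of the centralizer $U_S$ and of $M_q$, without invoking any convexity theorem --- so your circularity worry is unfounded. Your fallback of ``replacing $N$ by a connected component'' would, however, endanger (v), since nothing a priori guarantees that each component has dense $U$-saturation in $M$.
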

The manifold $N\subset M$ whose existence is guaranteed by theorem \ref{symp_slice} is called a \emph{symplectic slice} because it is a
symplectic manifold satisfying $\overline{U.N}=M$. Theorem \ref{symp_slice} is a quasi-hamiltonian analogue of a result of Hilgert, Neeb and Plank in \cite{HNP}. The details of the construction, that we now set out to give, will show some remarkable similarities with the usual hamiltonian setting.

\subsubsection{\textbf{Structure of the Weyl alcove}}\label{structure_Walc}

We begin by giving a description of the closed Weyl alcove $\clWalc\subset \frakt=Lie(T)$ in terms of the root system of $(U,T)$. We refer to \cite{Loos} and \cite{Bourbaki} for a proof of the classical results on root systems that we shall use. The compact connected simply connected Lie group $U$ is endowed with an $Ad$-invariant positive definite product $(.\, |\, .)$ (for instance, minus the Killing form). In particular, we may identify equivariantly $\fraku^*$ with $\fraku$ and $\frakt^*$ with $\frakt$. Consider now the adjoint action of the maximal torus $T\subset
U$ on the complexification $\mathfrak{u}^{\C}$ of $\mathfrak{u}=Lie(U)$, and
denote by $R$ the corresponding root system: $$R:=\big\{\alpha\in \mathfrak{t}^*\ |\
\mathfrak{u}^{\C}_{\alpha}\not=\{0\} \big\}$$ where for any linear form
$\alpha:\mathfrak{t}\to\R$ we set: $$\mathfrak{u}^{\C}_{\alpha}:=\{Y\in
\mathfrak{u}^{\C}\ |\ \mathrm{for\ all}\ X\in \mathfrak{t},
[X,Y]=2i\pi\alpha(X)\, Y\}.$$ To every root
$\alpha\in R$ we associate the hyperplane
$$\mathcal{H}_{\alpha}=\{X\in\mathfrak{t}\ |\
\alpha(X)=0\}=\ker\alpha\subset \mathfrak{t}.$$
A connected component of $\mathfrak{t}\bs \cup_{\alpha\in
R}\mathcal{H}_{\alpha}$ is called a \emph{Weyl chamber} of the root system
$R$. By definition, it is an open cone of $\mathfrak{t}$. In particular, it
is convex. As we want our notation to be consistent with the statement of the KGS convexity theorem (theorem \ref{KGS_thm}), a Weyl chamber will be denoted by $\mathfrak{t}^*_+\subset
\mathfrak{t}\simeq\mathfrak{t}^*$. Its closure will be denoted by
$\overline{\mathfrak{t}^*_+}$ and called a \emph{closed Weyl chamber}. It
is a closed convex subset of $\mathfrak{t}$.
We now choose a Weyl chamber $\mathfrak{t}^*_+\subset
\mathfrak{t}$ and denote by $R_+(\mathfrak{t}^*_+)$ (or simply $R_+$) the
set of associated positive roots: $$R_+(\mathfrak{t}^*_+):=\{ \alpha\in R\
|\ \alpha(X)> 0 \mathrm{\ for\ one\ and\ therefore\ all}\ X\in
\mathfrak{t}^*_+\}.$$ A positive root $\alpha\in R_+$ is then said to be
\emph{decomposable} if it can be written as a sum $\alpha=\sum_{\beta\in
R_+}n_{\beta}.\beta$ where $n_{\beta}\geq 0$ are integers. Otherwise it is
called indecomposable, or \emph{simple}. We denote by
$\Delta(\mathfrak{t}^*_+)$ (or simply $\Delta$) the set of simple roots in
$R_+$, also called a basis of $R$, since by definition every root
$\alpha\in R$ is of the form $\sum_{\beta\in\Delta}n_{\beta}.\beta$ with
$n_{\beta}\in \Z$. The set of simple roots $\Delta$ associated to the choice of a Weyl chamber
$\mathfrak{t}^*_+$ enables one to give a very nice description of the
polyhedral structure of the closure $\overline{\mathfrak{t}^*_+}$ of the Weyl
chamber, which we will recall shortly. This description is key to the proof
of the momentum convexity theorem presented in \cite{HNP}. We will now describe in
a similar way the polyhedral structure of a Weyl alcove. Generalizing the definition of the hyperplanes $\mathcal{H}_{\alpha}$, we
set, for all $\alpha\in R$ and all $n\in \Z$:
$$\mathcal{H}_{\alpha,n}:=\{X\in\mathfrak{t}\ |\ \alpha(X)=n\}\subset
\mathfrak{t}.$$ The set $$D:=\bigcup_{\alpha\in R,\ n\in
\Z}\mathcal{H}_{\alpha,n}$$ is called the \emph{Stiefel diagram}, or simply diagram, of
$\mathfrak{t}$. It is a family of affine hyperplanes of $\mathfrak{t}$.
\begin{defi}[Weyl alcove]
A connected component $\Walc$ of $\mathfrak{t}\bs D$ is called a \emph{Weyl
alcove} of the root system $R$. By definition, it is an open bounded convex
polyhedron of $\frakt$. For each choice of a Weyl chamber $\mathfrak{t}^*_+\subset
\mathfrak{t}$, with associated set of positive roots $R_+$ and set of simple
roots $\Delta$, there exists a unique Weyl alcove $\Walc$ whose closure
contains $0\in\mathfrak{t}$: $$\Walc =\{ X\in\mathfrak{t}\ |\ \forall
\alpha\in\Delta, \alpha(X)>0\ \mathrm{and}\ \forall \alpha\in R_+\bs\Delta,
\alpha(X)< 1\}.$$ 
\end{defi}
From now on, when we speak of a Weyl alcove, we will assume that its closure contains $0$. We then have:
\begin{prop}\label{alcove}
Let $U$ be a compact connected simply connected Lie group and let
$\Walc\subset \mathfrak{u}=Lie(U)$ be
a Weyl alcove for $U$. Then the set $\exp(\clWalc)\subset U$ is a fundamental domain for the conjugacy
action of $U$ on itself. Moreover, the exponential map $\exp: \mathfrak{u}\to
U$ induces a one-to-one map from the compact convex polytope $\clWalc$ onto
the closed set $\exp(\clWalc)\subset U$. Consequently, we have
homeomorphisms:
$$\overline{\mathcal{W}}\underset{\exp}{\overset{\simeq}{\longto}}
\exp(\overline{\mathcal{W}}) \overset{\simeq}{\longto} U/\Int(U)$$
\end{prop}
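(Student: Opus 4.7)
The plan is to reduce the group-theoretic statement to an equivalent statement about the affine Weyl group acting on $\frakt$, which is purely linear. First I would invoke the classical result that every element of the compact connected Lie group $U$ is conjugate to an element of the maximal torus $T$, so that the inclusion $T \hookrightarrow U$ induces a surjection $T \twoheadrightarrow U/\Int(U)$. Combined with the fact that two elements of $T$ are $U$-conjugate if and only if they lie in the same orbit of the Weyl group $W = N_U(T)/T$, this yields a bijection $T/W \xrightarrow{\simeq} U/\Int(U)$.

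Next, I would pull this description back through the exponential map. Write $T = \frakt/\Lambda$ where $\Lambda := \ker(\exp|_\frakt)$ is the unit lattice. The crucial input, which uses the assumption that $U$ is \emph{simply connected}, is that $\Lambda$ coincides with the coroot lattice of the root system $R$; this is the well-known identification $\pi_1(U) \simeq P^\vee/Q^\vee$ (see \cite{Bourbaki}). It follows that the affine group $W_{\mathrm{aff}} := W \ltimes \Lambda$ is precisely the group generated by the orthogonal reflections across the affine hyperplanes $\mathcal{H}_{\alpha,n}$ constituting the Stiefel diagram $D$, and one obtains an induced bijection $\frakt/W_{\mathrm{aff}} \xrightarrow{\simeq} T/W \xrightarrow{\simeq} U/\Int(U)$ which on the first factor is induced by $\exp$.

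The structural heart of the proof is then the standard fact from the theory of affine reflection groups: the closed alcove $\clWalc$, being the closure of a connected component of $\frakt \setminus D$, is a \emph{strict} fundamental domain for $W_{\mathrm{aff}}$, meeting each orbit in exactly one point. Granting this, the composition $\clWalc \hookrightarrow \frakt \xrightarrow{\exp} U \to U/\Int(U)$ is a continuous bijection, and $\exp|_{\clWalc}$ is itself injective since any two points of $\clWalc$ that differ by an element of $\Lambda$ lie in the same $W_{\mathrm{aff}}$-orbit and hence are equal.

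Finally, to upgrade these bijections to homeomorphisms, I would use compactness: $\clWalc$ is a closed bounded polytope of $\frakt$ by construction, hence compact; $U/\Int(U)$ is Hausdorff because $U$ is compact and acts on itself by a proper action; and $\exp(\clWalc) \subset U$ is compact as the continuous image of a compact set. A continuous bijection from a compact space to a Hausdorff space is a homeomorphism, giving both $\exp : \clWalc \xrightarrow{\simeq} \exp(\clWalc)$ and $\exp(\clWalc) \xrightarrow{\simeq} U/\Int(U)$. The main obstacle is really the reflection-group step: one must verify that, in the simply connected case, the unit lattice is large enough to realize the full affine Weyl group $W \ltimes Q^\vee$ as a group of symmetries of $\frakt$ preserving the exponential fibres, so that $\clWalc$ is a strict fundamental domain rather than a mere quotient by a subgroup; this is exactly where the hypothesis $\pi_1(U)=0$ enters.
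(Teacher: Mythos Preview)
Your argument is correct and is precisely the standard proof. Note, however, that the paper does not actually prove this proposition at all: it simply writes ``We refer to \cite{Bourbaki} (p.45) or to \cite{Loos} (p.37) for a proof of this result.'' What you have written is essentially an outline of the argument given in those references, reducing to the fact that $\clWalc$ is a strict fundamental domain for the affine Weyl group $W\ltimes\Lambda$ on $\frakt$, with the simply-connectedness hypothesis ensuring $\Lambda=Q^\vee$.
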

 We refer to \cite{Bourbaki} (p.45) or to \cite{Loos} (p.37) for a
proof of this result. We now wish to describe the polyhedral structure of the convex polytope
$\clWalc\subset \mathfrak{t}$. We begin with the polyhedral structure of the
closed Weyl chamber $\mathfrak{t}^*_+$ (see \cite{HNP}). By definition of
$\Delta\subset R_+$, we have: $$\mathfrak{t}^*_+=\{X\in \mathfrak{t}\ |\
\forall \alpha\in \Delta, \alpha(X)>0\}.$$ And: $$\overline{\mathfrak{t}^*_+}=\{X\in \mathfrak{t}\ |\
\forall \alpha\in \Delta, \alpha(X)\geq 0\}.$$ For each subset $S\subset
\Delta$, we set: $$F_S:=\{X\in\mathfrak{t}\ |\ \forall \alpha\in S,
\alpha(X)=0\ \mathrm{and}\ \forall \alpha\in\Delta\bs S,
\alpha(X)>0\}\subset \overline{\mathfrak{t}^*_+}.$$ And we then have:
$$\mathfrak{t}^*_+=F_{\emptyset}\quad \mathrm{and}\quad
\overline{\mathfrak{t}^*_+}=\bigsqcup_{S\subset \Delta} F_S.$$ One remarkable
feature of the sets $(F_S)_{S\subset\Delta}$, which we will call the
\emph{cells} of $\overline{\mathfrak{t}^*_+}$, is that two elements $X,Y$
lying in a same $F_S$ have the same stabilizer $U_X=U_Y$ for the (co-)adjoint
action of $U$ on $\mathfrak{u}\simeq\mathfrak{u}^*$ (see lemma 6.3 in
\cite{HNP}). We will establish an analogous property for the closed
Weyl alcove $\clWalc\subset\overline{\mathfrak{t}^*_+}$ (see
proposition \ref{stabilizer}). We first
observe that $\clWalc$ is also a union of cells. Instead of corresponding to
subsets $S\subset\Delta$ of the set of simple roots, these cells correspond
to subset $S\subset R_+$ of the whole set of positive roots. More precisely,
each subset $S\subset R_+$ can be uniquely written $S=S_1\cup S_2$ where
$S_1\subset\Delta$ and $S_2\subset R_+\bs\Delta$, and for such an $S=S_1\cup
S_2$, we set: $$\Walc_S:=\Bigg\{\ X\in\mathfrak{t}\ |\ \ 
\left\{\begin{array}{rl}
\forall \alpha\in S_1, & \alpha(X)=0 \\
\forall \alpha\in\Delta\bs S_1, & \alpha(X)>0
\end{array}\right.
\quad \mathrm{and} \quad
\left\{\begin{array}{rl}
\forall \alpha \in S_2, & \alpha(X)=1 \\
\forall \alpha \in (R_+\bs\Delta)\bs S_2, & \alpha(X) < 1
\end{array}\right. \quad
\Bigg\}.$$ In particular:$$\Walc = \{X\in\mathfrak{t}\ |\ \forall \alpha\in\Delta, \alpha(X)>0
\ \mathrm{and}\ \forall \alpha\in R_+\bs\Delta, \alpha(X)<1 \}
= \Walc_{\emptyset}.$$ And: $$\clWalc= \{X\in\mathfrak{t}\ |\ \forall
\alpha\in\Delta, \alpha(X)\geq 0
\ \mathrm{and}\ \forall \alpha\in R_+\bs\Delta, \alpha(X) \leq 1 \}
= \bigsqcup_{S\subset R_+} \Walc_S.$$
\begin{defi}\label{cells}
The sets $(\Walc_S)_{S\subset R_+}$ are called the
\emph{cells} of the closed Weyl alcove $\clWalc$.
\end{defi}
We then have the following result:
\begin{lem}\cite{Bourbaki} \emph{(p.48)}\label{centralizer}
Let $U$ be a compact connected simply connected Lie group. Then for any
$u\in U$, the centralizer $U_u=\{v\in U\ |\ vuv^{-1}=u\}$ is connected.
\end{lem}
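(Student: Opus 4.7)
The plan is to use the structure of the closed Weyl alcove developed above to identify the identity component $L := U_u^0$ explicitly, and then to show that the component group $U_u/L$ is trivial. By conjugating if necessary I may assume $u\in T$, and by proposition \ref{alcove} I write $u=\exp X$ with $X\in\clWalc$. Using the root-space decomposition $\fraku^{\C}=\frakt^{\C}\oplus\bigoplus_{\alpha\in R}\fraku_{\alpha}^{\C}$, the element $\mathrm{Ad}(u)=\mathrm{Ad}(\exp X)$ acts on $\fraku_{\alpha}^{\C}$ by the scalar $e^{2i\pi\alpha(X)}$, so the Lie algebra of $U_u$ is
\[
\fraku_u=\frakt\oplus\bigoplus_{\alpha\in R_X}(\fraku_{\alpha})_{\R},\qquad\text{where}\quad R_X:=\{\alpha\in R\ |\ \alpha(X)\in\Z\}.
\]
Thus $L$ is the compact connected Lie subgroup of $U$ with Lie algebra $\fraku_u$; it contains $T$ and the $SU(2)$-subgroups $U_{\alpha}$ associated to the roots $\alpha\in R_X$, so that its Weyl group $N_L(T)/T$ is generated by the reflections $s_{\alpha}$, $\alpha\in R_X$.

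Next I reduce the problem to a statement about Weyl groups. For any $v\in U_u$, the conjugate $vTv^{-1}$ is a torus inside $U_u$; being connected, it lies in the identity component $L$, and since it has the same dimension as $T$, it is a maximal torus of $L$ (as $T$ itself is one). By conjugacy of maximal tori in the compact connected Lie group $L$, some $l\in L$ satisfies $l(vTv^{-1})l^{-1}=T$, hence $lv\in N_U(T)\cap U_u$. Consequently $U_u=L\cdot(N_U(T)\cap U_u)$, so the connectedness of $U_u$ reduces to showing that the image $W_u:=\{w\in W\ |\ w.u=u\}$ of $(N_U(T)\cap U_u)/T$ in the Weyl group $W:=N_U(T)/T$ coincides with the subgroup $N_L(T)/T$ generated by the $s_{\alpha}$, $\alpha\in R_X$.

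This is where the simple connectedness of $U$ enters. Under this assumption, $\ker(\exp|_{\frakt})$ is precisely the coroot lattice $Q^{\vee}$, so the condition $w.u=u$ translates into $w.X-X\in Q^{\vee}$, which is the condition that $w$ be the linear part of some element of the affine Weyl group $W_{\mathrm{aff}}:=W\ltimes Q^{\vee}$ fixing $X$. By Chevalley's theorem on point-stabilisers in an affine reflection group, the stabiliser of $X$ in $W_{\mathrm{aff}}$ is generated by the reflections across the walls of the Stiefel diagram $D$ passing through $X$. A wall $\mathcal{H}_{\alpha,n}$ contains $X$ exactly when $\alpha\in R_X$ (and then $n=\alpha(X)$), and the affine reflection across $\mathcal{H}_{\alpha,n}$ has linear part $s_{\alpha}$; so $W_u$ is generated by $\{s_{\alpha}\}_{\alpha\in R_X}$, which is precisely $N_L(T)/T$.

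The main obstacle is the invocation of Chevalley's theorem for the \emph{affine} Weyl group, which rests on the fact that $W_{\mathrm{aff}}$ acts as a reflection group on $\frakt$ with fundamental domain $\clWalc$ (an ingredient also underlying proposition \ref{alcove}). A subsidiary technical point is that each reflection $s_{\alpha}$ for $\alpha\in R_X$ admits a representative inside $N_U(T)\cap L$: this follows from $\fraku_{\alpha}\subset\mathrm{Lie}(L)$, which forces $U_{\alpha}\subset L$ and hence provides the desired lift.
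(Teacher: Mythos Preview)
The paper does not actually prove this lemma: it is stated with a bare citation to Bourbaki and no argument is given. Your proof is correct and is in fact essentially the classical argument one finds in the reference cited (reduce to $u=\exp X\in T$, describe $\mathrm{Lie}(U_u)$ via the root system, show $U_u=L\cdot(N_U(T)\cap U_u)$, and then identify the isotropy subgroup $W_u\subset W$ with the Weyl group of $L$ by passing through the affine Weyl group and invoking the fact that point-stabilisers in an affine reflection group are generated by the reflections through that point). The simple-connectedness hypothesis is used exactly where you use it, namely to ensure $\ker(\exp|_{\frakt})=Q^{\vee}$ so that $w.u=u$ translates into $w.X-X\in Q^{\vee}$; this is precisely what fails for non-simply-connected $U$ (where the integral lattice can be strictly larger than $Q^{\vee}$ and disconnected centralisers do occur). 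One small point worth making explicit: the surjectivity of the projection from the $W_{\mathrm{aff}}$-stabiliser of $X$ onto $W_u$ is what lets you conclude that $W_u$ is generated by the images $s_\alpha$ of the affine reflections; you state this implicitly but it is the crux of the final step.
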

We can now show that the stabilizer for the conjugacy action of an element $\exp(X)\in\exp(\clWalc)$ depends only on the cell of $\clWalc$ containing $X$:
\begin{prop}\label{stabilizer}
If $u,v\in\exp\clWalc=\sqcup_{S\subset R_+}\exp(\Walc_S)$
lie in a same $\exp(\Walc_S)$, then the centralizers $U_u$ and $U_v$ are
equal.
\end{prop}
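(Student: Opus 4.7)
The plan is to invoke Lemma \ref{centralizer} to reduce the proposition to the equality of Lie algebras $\mathrm{Lie}(U_u)=\mathrm{Lie}(U_v)$, and then read off these Lie algebras from the root space decomposition, showing that the resulting data depend only on the cell $\Walc_S$ and not on the chosen point within it. Since $U$ is compact connected and simply connected, Lemma \ref{centralizer} says that $U_u$ and $U_v$ are connected closed Lie subgroups; thus they agree as subgroups if and only if they agree as Lie algebras.

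First I would compute $\mathrm{Lie}(U_{\exp X})$ for arbitrary $X\in\frakt$. After complexification, the adjoint representation of $T$ on $\fraku^{\mathbb{C}}$ decomposes as $\fraku^{\mathbb{C}}=\frakt^{\mathbb{C}}\oplus\bigoplus_{\alpha\in R}\fraku^{\mathbb{C}}_{\alpha}$, and with the convention $[X,Y]=2i\pi\alpha(X)Y$ for $Y\in\fraku^{\mathbb{C}}_{\alpha}$ one has $\mathrm{Ad}(\exp X)=\exp(\mathrm{ad}_X)$ acting trivially on $\frakt^{\mathbb{C}}$ and by the scalar $e^{2i\pi\alpha(X)}$ on $\fraku^{\mathbb{C}}_{\alpha}$. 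Consequently,
$$\mathrm{Lie}(U_{\exp X})^{\mathbb{C}}\;=\;\frakt^{\mathbb{C}}\oplus\bigoplus_{\alpha\in R,\;\alpha(X)\in\Z}\fraku^{\mathbb{C}}_{\alpha},$$
so that $\mathrm{Lie}(U_{\exp X})$ is entirely determined by the subset $R(X):=\{\alpha\in R\ |\ \alpha(X)\in\Z\}$ of the root system.

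The second, combinatorial step is to show that $R(X)=R(Y)$ whenever $X,Y\in\Walc_S$. Since $R(X)=-R(X)$, one may restrict attention to $\alpha\in R_+$. For $X\in\clWalc$, expanding $\alpha=\sum_{\beta\in\Delta}n_{\beta}\beta$ with $n_{\beta}\in\Z_{\geq 0}$ and using the bound $\theta(X)\leq 1$ coming from the highest root $\theta\in R_+\bs\Delta$, one obtains $0\leq\alpha(X)\leq 1$; hence $\alpha\in R(X)\cap R_+$ iff $\alpha(X)\in\{0,1\}$. Now write $S=S_1\sqcup S_2$ as in the definition of cells. The equality $\alpha(X)=0$ forces $n_{\beta}\beta(X)=0$ for all $\beta\in\Delta$, hence $n_{\beta}=0$ whenever $\beta\in\Delta\bs S_1$; this is a condition on $\alpha$ depending only on $S_1$. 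The equality $\alpha(X)=1$ with $\alpha\in R_+\bs\Delta$ is, directly by the definition of $\Walc_S$, equivalent to $\alpha\in S_2$; and the remaining case $\alpha\in\Delta$ with $\alpha(X)=1$ forces, via $1\geq\theta(X)=\sum n_{\gamma}\gamma(X)\geq n_{\alpha}$, the constraints $n_{\alpha}=1$ and $\Delta\bs\{\alpha\}\subset S_1$, together with $\theta\in S_2$, all of which depend only on $S$. In every case the membership $\alpha\in R(X)$ is decided by $S$ alone.

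Combining the two steps gives $\mathrm{Lie}(U_u)=\mathrm{Lie}(U_v)$, and the connectedness of centralizers then yields $U_u=U_v$. The main obstacle, and where care is required, is the combinatorial analysis in the second step, specifically the $\alpha(X)=1$ case with $\alpha\in\Delta$: here one must verify that this equality is not an accidental coincidence for a single $X\in\Walc_S$ but is in fact enforced (when it occurs) by the cell structure itself, so that it does transfer to every other $Y\in\Walc_S$.
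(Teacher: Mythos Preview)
Your proof is correct and follows the same approach as the paper: reduce via Lemma \ref{centralizer} to equality of Lie algebras, compute $\mathrm{Lie}(U_{\exp X})$ from the root decomposition, and observe that the relevant set $\{\alpha\in R_+\ |\ \alpha(X)\in\{0,1\}\}$ depends only on the cell. The paper's version simply asserts this last combinatorial fact without justification, whereas you actually verify it case by case; so your argument is a fleshed-out version of the paper's sketch rather than a different route. One minor caveat: your use of a single highest root $\theta\in R_+\bs\Delta$ tacitly assumes the root system is irreducible of rank at least $2$, so strictly speaking the $\alpha\in\Delta$, $\alpha(X)=1$ case should be handled componentwise (and separately for rank-one factors); this is a routine adjustment and does not affect the substance of the argument.
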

\begin{proof}
Since $U$ is compact connected and simply connected, lemma \ref{centralizer}
shows that $U_u$ and $U_v$ are compact connected subgroups of $U$. Therefore
$U_u=U_v$ if and only if their Lie algebras are equal. We then know from
\cite{Loos} (p.7) that the Lie algebra of $U_u$ is:
$$Lie(U_u)\quad = \quad \mathfrak{t} \quad\oplus \sum_{\alpha\, |\,
\exp(i2\pi\alpha(X))=1} \big(\mathfrak{u}\cap\mathfrak{u}_{\alpha}^{\C}\big)$$
where $X\in\mathfrak{t}$ satisfies $\exp(X)=u$. But for $X\in\clWalc$, the
set $$\Big\{\alpha\in R_+\ |\
\exp\big(i2\pi\alpha(X)\big)=1\Big\}$$ is equal to $$\Big\{\alpha\in R_+\ |\
\alpha(X)=0\ \mathrm{or}\ \alpha(X)=1\Big\}$$ so that it only depends on the
cell $\Walc_S\subset\clWalc$ in which $X$ lies, which proves the proposition.
\end{proof}
\begin{defi}\label{US}
For any subset $S\subset R_+$, we denote by $U_S$ the
stabilizer of any element $u\in\exp(\Walc_S)$.
\end{defi}
 Finally, if we consider, for any integer $j$, the set
$\Sigma_j:=\{u\in U\ |\ \dim\, U.u=j\}$ of points of $U$ whose conjugacy
class is of dimension $j$, we have:
\begin{prop}\label{Sigma_j}
The intersection of $\Sigma_j$ with $\exp(\clWalc)$ is:
\begin{eqnarray*}\Sigma_j\cap\exp(\clWalc) \quad & = & \bigsqcup_{S\, |\, \dim\, U_S=\dim\, U-j}
\exp(\Walc_S).\end{eqnarray*}
In addition to that, $\Sigma_j$ is a submanifold of $U$
and so is every $\exp(\Walc_S)$. For any $u\in \exp(\Walc_S)$, one has:
$$T_u\Sigma_j=T_u(U.u)\oplus T_u\exp(\Walc_S)$$
\end{prop}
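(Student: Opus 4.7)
\emph{Stage 1 (set-theoretic decomposition).} Combining the cell decomposition $\clWalc = \bigsqcup_{S \subset R_+} \Walc_S$ with Proposition \ref{alcove}, I get $\exp(\clWalc) = \bigsqcup_{S} \exp(\Walc_S)$. For $u \in \exp(\Walc_S)$, Proposition \ref{stabilizer} gives $U_u = U_S$, so the orbit-stabilizer formula yields $\dim(U.u) = \dim U - \dim U_S$. Consequently $\exp(\Walc_S) \subset \Sigma_j$ if and only if $\dim U_S = \dim U - j$, which is exactly the announced equality.

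\emph{Stage 2 (submanifold structure).} Each $\Walc_S$ is a relatively open subset of the affine subspace of $\frakt$ cut out by $\alpha(X)=0$ for $\alpha \in S_1$ and $\alpha(X)=1$ for $\alpha \in S_2$, hence a submanifold of $\frakt$. Since $\exp|_{\frakt} : \frakt \to T$ is a local diffeomorphism (it is a homomorphism of tori of the same dimension) and $\exp|_{\clWalc}$ is a homeomorphism onto its image (Proposition \ref{alcove}), the restriction $\exp|_{\Walc_S}$ is a diffeomorphism onto $\exp(\Walc_S)$, making the latter a submanifold of $T$, hence of $U$. That $\Sigma_j$ itself is a submanifold is the standard statement that, for a smooth action of a compact Lie group, the locus of orbits of a given dimension assembles into a locally closed submanifold; it follows from the slice theorem.

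\emph{Stage 3 (tangent-space splitting).} Identify $T_u U$ with $\fraku$ via left-translation. Then $T_u \exp(\Walc_S) \subset T_u T = \frakt$ is the subspace $\{V \in \frakt : \alpha(V) = 0 \ \forall \alpha \in S\}$, while $T_u(U.u)$ is the image of $Y \mapsto Y - \mathrm{Ad}(u^{-1}) Y$. Decomposing $\fraku = \frakt \oplus \bigoplus_{\alpha \in R_+} \fraku_\alpha^{\R}$ under the $T$-action (with $\fraku_\alpha^{\R} := \fraku \cap (\mathfrak{u}_\alpha^{\C} \oplus \mathfrak{u}_{-\alpha}^{\C})$ of real dimension $2$), the operator $\mathrm{Ad}(u^{-1})$ fixes $\frakt$ pointwise and acts as a rotation on each $\fraku_\alpha^{\R}$. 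Hence $Y - \mathrm{Ad}(u^{-1}) Y$ vanishes on $\frakt$ and has image in the $\mathrm{Ad}$-orthogonal complement of $\frakt$, so $T_u(U.u) \cap T_u \exp(\Walc_S) = \{0\}$. Since both summands lie in $T_u \Sigma_j$, their direct sum is contained in $T_u \Sigma_j$. For equality I would invoke the slice theorem at $u$: taking $\fraku_u$ as a $U_u$-invariant complement to the image of $Y \mapsto Y - \mathrm{Ad}(u^{-1}) Y$, the fixed locus of $U_u = U_S$ acting by $\mathrm{Ad}$ on $\fraku_u$ is the center $Z(\fraku_u)$, which by the description $Lie(U_S) = \frakt \oplus \bigoplus_{\alpha \in S} \fraku_\alpha^{\R}$ used in the proof of Proposition \ref{stabilizer} equals exactly $\{V \in \frakt : \alpha(V)=0 \ \forall \alpha \in S\} = T_{X_0} \Walc_S$. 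This identification of the $U_S$-fixed part of the slice with the tangent direction to $\exp(\Walc_S)$ is the delicate step I anticipate as the main obstacle, and it closes the dimension count $\dim \Sigma_j = j + \dim \Walc_S$ at $u$, yielding equality in the splitting.
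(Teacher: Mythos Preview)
Your proof is correct and follows the same strategy as the paper: both invoke the slice theorem for the conjugation action to show $\Sigma_j$ is a submanifold and to obtain the tangent splitting. The paper is terser, asserting without further justification that ``$\exp(\Walc_S)$ is a local slice through any $u\in\exp(\Walc_S)$'' (by which it really means: $\exp(\Walc_S)$ coincides near $u$ with the $U_u$-fixed locus of a slice), whereas your Stage~3 supplies exactly the missing computation --- identifying the $U_S$-fixed part of the linear slice $\fraku_u$ with the center $Z(\fraku_u)=\{V\in\frakt:\alpha(V)=0\ \forall\alpha\in S\}=T_{X_0}\Walc_S$ via the root-space description of $Lie(U_S)$ from Proposition~\ref{stabilizer}. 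So your argument is the paper's argument with the ``delicate step'' you flagged made explicit; there is no genuine difference in route.
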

\begin{proof}
This result is a consequence of the existence of local slices for actions of compact Lie groups (see for instance \cite{GS}): for any local slice $\mathcal{L}$ through $u\in U$, the set of
elements of $\mathcal{L}$ whose conjugacy class has the same dimension as $U.u$ consists precisely of elements which are fixed by $U_u$, and for a linear action the set of such fixed points is a
subspace, so that $\Sigma_j$ is a submanifold of $U$, and it is
$U$-invariant. Now for any $S\subset R_+$,
$\exp(\Walc_S)$ is a submanifold of $U$ (the chart is given by the
exponential map) and it follows from the fact that $\exp(\clWalc)=\sqcup
\exp(\Walc_S)$ is a fundamental domain for the conjugacy action and from
proposition \ref{stabilizer} that \begin{eqnarray*}\Sigma_j\cap\exp(\clWalc)
\quad & = & \bigsqcup_{S\, |\, \dim\, U_S=\dim\, U-j}
\exp(\Walc_S).\end{eqnarray*} 
 and that $\exp(\Walc_S)$ is a local slice through \emph{any}
$u\in\exp(\Walc_S)$, which concludes the proof.
\end{proof}

\subsubsection{\textbf{Inverse image of a certain cell of the Weyl alcove}}\label{inverse_image}

From now on, we will also call \emph{closed Weyl alcove}, or even simply Weyl alcove, the subset $\exp(\clWalc)$ of $U$, which is homeomorphic to the actual closed Weyl alcove $\clWalc$ via the exponential map. It will always be clear from the context which of the sets $\clWalc$ or $\exp(\clWalc)$ we are referring to.
To prove theorem \ref{symp_slice}, we will construct a submanifold $N\subset M$ such that $\mu(N)\subset \exp(\clWalc)\subset U$, so that the map $\widetilde{\mu}:N\to\clWalc$ is no other than
$\widetilde{\mu}=\exp^{-1}\circ\mu|_N : N\to \mathfrak{t}$ and we
will show that it is a smooth map from $N$ to $\mathfrak{t}$. This submanifold $N\subset M$ will in fact be of the form $N=\exp^{-1}(\Walc_S)$ for a certain subset $S\subset R_+$. We will show that $\exp(\Walc_S)$ is the cell of $\exp(\clWalc)$ containing the points of $\mu(M)\cap\exp(\clWalc)$ whose conjugacy class in $U$ is of maximal possible dimension (we will see in lemma \ref{MS} that such a cell is well-defined and unique). This idea is inspired by the usual hamiltonian setting (see \cite{HNP} and also \cite{LMTW}). Studying points of $\mu(M)$ whose conjugacy class is of maximal possible dimension is a natural thing to do as the set of such points is dense in $\mu(M)$ and that we are ultimately interested in studying $\mu(M)$. To begin with, let us define:
$$q:=\max\, \{\dim\, U.\mu(x)\, :\, x\in M\}$$ $$\Sigma_q :=\{u\in U \ |\ \dim\,
U.u =q\}.$$ Recall that $\Sigma_q$ is a submanifold of $U$ (see proposition
\ref{Sigma_j}). Define then: $$M_q:=\{x\in M\ |\ \dim\, U.\mu(x)=q\}
=\mu^{-1}(\Sigma_q)$$ so that $\mu(M_q)$ is exactly the set of points of
$\mu(M)$ whose conjugacy class in $U$ is of maximal possible
dimension.
The first thing to observe is that $M_q$ is an open, connected, and dense
subset of $M$. Let us first show that it is open. For any $x\in M_q$, there
exists an open neighbourhood $\mathcal{V}$ of $\mu(x)$ in $U$ such that for
all $u\in \calV$, $\dim\, U.u \geq \dim\, U.\mu(x)$. Since $\mu$ is
continuous, we have $\mu(y)\in\calV$ for all $y$ in some open set $\calU$ of
$M$ containing $x$. Since $\dim\, U.\mu(x)$ is maximal, we necessarily have
$\dim\, U.\mu(y)=q$ for all $y\in \calU$, so that $\calU \subset M_q$. We now
want to prove that $M_q$ is dense and connected. To that end, we introduce
the set $M_{reg}$ of points of $M$ whose orbit under $U$ is of maximal
possible dimension: $$r:=\max\, \{\dim\, U.x\, :\, x\in M\}$$ $$M_{reg}=\{x\in
M \ |\ \dim\, U.x=r\}.$$ The set $M_{reg}$ is an open, connected, and dense subset of $M$. 
As $M_q$ is open,
the intersection $M_{reg,q}:=M_{reg}\cap M_q$ is a non-empty open set of
$M$. In addition to that, since $(M,\w,\mu:M\to U)$ is a quasi-hamiltonian
space, $M_{reg}$ enjoys the following remarkable property: $$M_{reg}=\{x\in
M\ |\ \mathrm{rk}\, T_x\mu\ \mathrm{is\ maximal}\}.$$ Indeed, since $\mu$ is a momentum map, one has $\im T_x\mu\simeq \fraku_x^{\perp}$ (see \cite{AMM}), and therefore:
\begin{eqnarray*}
\max_{x\in M}\ \{\dim\, \im T_x\mu\} & = & \max_{x\in M}\ \{\dim\,
\mathfrak{u}_x^{\perp}\} \\
& = & \dim\, \mathfrak{u} - \min_{x\in M}\ \{\dim\, \mathfrak{u}_x\} \\
& = & \dim\, U -\min_{x\in M}\ \{\dim\, U_x\} \\
& = & \max_{x\in M}\ \{\dim\, U.x\}\\
& = & r.
\end{eqnarray*}
 In particular, $\mu$ is of constant rank on $M_{reg}$. Now, to
show that $M_q$ is dense and connected in $M$, since we have
$M_{reg,q}\subset M_q\subset M$, it is enough to prove that $M_{reg,q}$ is
dense and connected in $M$. First, we observe that this is locally true in
the following sense:
\begin{lem}\label{localversion}
For all $x\in M_{reg}$, there exists an open neighbourhood $\calV_x$ of
$x\in M_{reg}$ such that $M_{reg,q}\cap\calV_x$ is a dense and connected
subset of $\calV_x$.
\end{lem}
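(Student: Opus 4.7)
The plan is to work locally around $x\in M_{reg}$ by combining the $U$-equivariant slice theorem with the constant-rank theorem applied to $\mu|_{M_{reg}}$; the relevant rank is exactly $r$, since for every $y\in M_{reg}$ the formula $\im T_y\mu=\fraku_y^{\perp}$ gives $\dim\,\im T_y\mu=\dim U-\dim U_y=\dim U.y=r$. I would then choose a $U$-invariant, connected open neighbourhood $\calV_x\subset M_{reg}$ of $x$ (for instance by $U$-saturating a small open ball around $x$ and shrinking if necessary) such that the image $W:=\mu(\calV_x)\subset U$ is a connected, $U$-invariant, embedded $r$-dimensional submanifold of $U$, and such that $\mu|_{\calV_x}\colon\calV_x\to W$ is a surjective submersion with connected fibres. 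The existence of such a $\calV_x$ follows from the constant-rank theorem together with a shrinking argument that makes the fibres homeomorphic to small balls in the fibre direction of the local model.

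The identity
\[
M_{reg,q}\cap\calV_x\;=\;(\mu|_{\calV_x})^{-1}\bigl(W\cap\Sigma_q\bigr),
\]
combined with the fact that $\mu|_{\calV_x}$ is an open surjection with connected fibres onto the connected base $W$, reduces the density and connectedness of $M_{reg,q}\cap\calV_x\subset\calV_x$ to the corresponding properties of $W\cap\Sigma_q\subset W$. For these I would apply the principal orbit theorem to the compact connected group $U$ acting on the connected $U$-invariant manifold $W$: the subset $W^{\circ}\subset W$ of points whose $U$-orbit attains the maximal dimension $q_W$ occurring in $W$ is open, dense, and connected in $W$. The identification $W^{\circ}=W\cap\Sigma_q$ would then follow from the equality $q_W=q$.

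The inequality $q_W\leq q$ is immediate from the definition of $q$. For $q_W\geq q$ one uses that $\mu(M_{reg})$ is dense in $\mu(M)$ (as the continuous image of a dense subset), so the non-empty open set $\Sigma_q\cap\mu(M)$ meets $\mu(M_{reg})$; combined with the principal orbit theorem applied to the $U$-action on the connected (immersed) manifold $\mu(M_{reg})$, this forces the stratum of principal $U$-orbits in $\mu(M_{reg})$ to be of orbit dimension $q$ and to be open and dense there, hence to meet any non-empty open subset $W$. The main obstacle, as I see it, is precisely this identification $q_W=q$: it is not obvious a priori that for $x\in M_{reg}$ whose image $\mu(x)$ lies in a sub-maximal stratum $\Sigma_{q'}\cap\mu(M_{reg})$ with $q'<q$, a small $U$-invariant neighbourhood $\calV_x$ still has $\mu(\calV_x)$ meeting $\Sigma_q$; the resolution uses in an essential way the $r$-dimensionality of the image $\mu(M_{reg})$ and the fact that its principal $U$-orbit stratum is open and dense. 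Once $q_W=q$ is established, the rest of the argument is a routine application of the properties of open surjective submersions with connected fibres.
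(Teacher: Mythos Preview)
Your overall strategy matches the paper's: use the constant-rank property of $\mu|_{M_{reg}}$ to get a local submersion onto a piece of $U$, identify the maximal-orbit-dimension locus in the image, and pull back. However, there are two genuine gaps.

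First, the existence of a $U$-invariant neighbourhood $\calV_x$ with all the properties you list is not established. The constant-rank theorem gives a small chart $B$ on which $\mu$ is a submersion onto an $r$-disk; $U$-saturating $B$ yields an open set $U.B$, but $\mu(U.B)=U.\mu(B)$ is a union of conjugacy classes of varying dimension and there is no reason for it to be an embedded $r$-manifold, nor for the fibres of $\mu|_{U.B}$ to stay connected. ``Shrinking if necessary'' does not obviously help, since any $U$-invariant neighbourhood of $x$ contains the full orbit $U.x$ and cannot be made small in the orbit directions. The paper sidesteps this by \emph{not} requiring $\calV_x$ to be $U$-invariant: it takes a small constant-rank chart and only uses that $\mu(\calV_x)$ is foliated by \emph{open pieces} of conjugacy classes (a consequence of equivariance and continuity), which suffices for the local density and connectedness of $\Omega_x$.

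Second, and more seriously, your argument for $q_W=q$ invokes the principal orbit theorem on ``the connected (immersed) manifold $\mu(M_{reg})$''. But $\mu(M_{reg})$ is merely the image of a constant-rank map and is not a manifold in general, so the principal orbit theorem does not apply. In fact the statement you need---that $\Sigma_q\cap\mu(M_{reg})$ is dense in $\mu(M_{reg})$---is essentially equivalent to the lemma itself, so the argument is close to circular. The paper resolves this by a path argument carried out \emph{in $M_{reg}$}, which \emph{is} a connected manifold: for each $z\in M_{reg}$ one defines $q_z:=\max\{\dim U.v:v\in\mu(\calV_z)\}$; if $\calV_{z_i}\cap\calV_{z_j}\neq\emptyset$ then by openness and density the sets $R_{z_i}$ and $R_{z_j}$ (where the respective local maxima are attained) must intersect, forcing $q_{z_i}=q_{z_j}$. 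Covering a path between any two points of $M_{reg}$ by finitely many such charts shows $q_z$ is constant on $M_{reg}$, and since $M_{reg}\cap M_q\neq\emptyset$ this constant equals $q$. This chain-of-charts argument is the key idea your proposal is missing.
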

\begin{proof}
Since $\mu$ is of constant rank on $M_{reg}$, there exists an open connected
neighbourhood $\calV_x$ of $x$ in $M_{reg}$ such that $\mu(\calV_x)$ is a
(connected) submanifold of $U$ (of dimension equal to $\mathrm{rk}\, T_x\mu$)
and such that $\mu|_{\calV_x}:\calV_x\to\mu(\calV_x)$ is a locally trivial
submersion onto a connected manifold with connected fibres (the constant
rank theorem says that $\mu$ is locally equivalent to the linear projection
$(x_1,\ {\ldots} \ ,x_n)\mapsto (x_1,\ {\ldots}\ ,x_r,0,\ {\ldots}\ ,0)$). Since $\mu$ is equivariant and
continuous, we have $u.\mu(y)\in\mu(\calV_x)$ for $y\in\calV_x$ and $u$
sufficiently close to $1$ in $U$. Therefore, $\mu(\calV_x)$ is a union of
connected open subsets of conjugacy classes of $U$. Since in addition to that
$\mu(\calV_x)$ is connected, we have, if we set $$q_x:=\max\, \{\dim\, U.v\, :\,
v\in \mu(\calV_x)\}$$ (observe that $q_x$ is not necessarily equal to
$\dim\, U.\mu(x)$) and $$\Omega_x:=\{\mu(y)\in\mu(\calV_x)\ |\ \dim\,
U.\mu(y)=q_x\}$$ that $\Omega_x$ is an open, connected, and dense subset of
$\mu(\calV_x)$. Now, since $\Omega_x$ is an
open dense and connected subset of $\mu(\calV_x)$ and since
$\mu|_{\calV_x}:\calV_x\to\mu(\calV_x)$ is a locally trivial submersion
with connected fibres over the connected manifold $\mu(\calV_x)$, we have
that $(\mu|_{\calV_x})^{-1}(\Omega_x)=\mu^{-1}(\Omega_x)\cap\calV_x$ is an
open dense and connected subset of $\calV_x$ (recall that the submersion
$\mu|_{\calV_x}$ is equivalent to $(x_1,\ {\ldots} \ ,x_n)\mapsto (x_1,\
{\ldots} \ , x_r,0,\ {\ldots} \ ,0)$). Moreover, if $x,y\in M_{reg}$, we can
join them by a path $c:[0,1] \to M_{reg}$. Denote by $\hat{c}$ the compact
connected set $\hat{c}:=c([0,1])$. For every $z\in \hat{c}$, there exists an
open neighbourhood $\calV_z$ of $z$ in $M_{reg}$ such that the set $$R_z:=\big\{
w\in \calV_z\ | \ \dim\, U.\mu(z) =q_z=\max_{v\in\mu(\calV_z)} \{\dim\,
U.v\}\big\}$$ is open, connected, and dense in $\calV_z$. By compactness, we
can cover $\hat{c}$ by a finite number of such $\calV_z$:
$\hat{c}=\calV_{z_1}\cup\ {\ldots} \cup\calV_{z_p}$ with $z_1=x$ and
$z_p=y$. If $\calV_{z_i}\cap\calV_{z_j}\not=\emptyset$, then by density and
openness, $R_{z_i}\cap R_{z_j}\not=\emptyset$. Therefore, for $w\in
R_{z_i}\cap R_{z_j}$, the conjugacy class of $\mu(w)$ has dimension
$q_{z_i}=q_{z_j}$, whence we get $q_x=q_y$, so that $q_x$ is the same for
all $x\in M_{reg}$. As $M_{reg}\cap M_q\not=\emptyset$, one necessarily has
$q_x=q$ for all $x\in M_{reg}$. Therefore $\mu^{-1}(\Omega_x)\cap \calV_x =
M_{reg,q}\cap\calV_x$, which proves the lemma. 
\end{proof}
We now go back to our global study:
\begin{lem}\label{Mq}
The subset $M_{reg,q}:=M_{reg}\cap M_q$ is an open, connected, and dense
subset of $M$. Consequently, so is $M_q$.
\end{lem}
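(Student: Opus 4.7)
The plan is to deduce the three properties (open, dense, connected) for $M_{reg,q}$ directly from Lemma \ref{localversion} and the already established properties of $M_{reg}$ and $M_q$, and then transfer them to $M_q$ using the inclusion $M_{reg,q}\subset M_q$ together with density.

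First, openness of $M_{reg,q}$ is immediate: $M_q$ has already been shown to be open in $M$, and $M_{reg}$ is open by general principles on group actions on manifolds, so the intersection is open.

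Next, I would show density in two steps. By Lemma \ref{localversion}, every point $x\in M_{reg}$ has an open neighborhood $\calV_x\subset M_{reg}$ in which $M_{reg,q}\cap\calV_x$ is dense; this proves that $M_{reg,q}$ is dense in $M_{reg}$. Since $M_{reg}$ is itself dense in $M$, it follows by transitivity of density that $M_{reg,q}$ is dense in $M$.

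The heart of the argument, and the main obstacle, is connectedness of $M_{reg,q}$. Here I would run the standard chain argument. Given $x,y\in M_{reg,q}$, use connectedness of $M_{reg}$ to pick a continuous path $c:[0,1]\to M_{reg}$ from $x$ to $y$. Cover the compact image $\hat c=c([0,1])$ by finitely many neighborhoods $\calV_{z_1},\ldots,\calV_{z_p}$ of the form provided by Lemma \ref{localversion}, ordered along $c$ so that $z_1=x$, $z_p=y$ and consecutive neighborhoods overlap. On each $\calV_{z_i}$ the set $M_{reg,q}\cap\calV_{z_i}$ is open, connected, and dense in $\calV_{z_i}$. On a nonempty overlap $\calV_{z_i}\cap\calV_{z_{i+1}}$, which is open, the two dense open sets $M_{reg,q}\cap\calV_{z_i}$ and $M_{reg,q}\cap\calV_{z_{i+1}}$ must meet. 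Taking the union of these connected sets along the chain produces a connected subset of $M_{reg,q}$ containing both $x$ and $y$, which gives connectedness of $M_{reg,q}$.

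Finally, for the consequence concerning $M_q$: openness was already known, density is inherited from the inclusion $M_{reg,q}\subset M_q$, and for connectedness I would use the standard topological fact that if $A\subset B\subset \overline{A}$ with $A$ connected, then $B$ is connected. Applying this with $A=M_{reg,q}$ and $B=M_q$ (noting $M_q\subset M=\overline{M_{reg,q}}$) yields connectedness of $M_q$, completing the proof.
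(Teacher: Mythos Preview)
Your proposal is correct and follows essentially the same approach as the paper: openness from the intersection of two opens, density via Lemma~\ref{localversion} plus density of $M_{reg}$, connectedness by a finite chain of the local neighborhoods $\calV_{z_i}$ from Lemma~\ref{localversion} along a path in $M_{reg}$, and the transfer to $M_q$ via $M_{reg,q}\subset M_q\subset\overline{M_{reg,q}}$. Your chain argument is in fact slightly cleaner than the paper's, since you work directly with the connected sets $M_{reg,q}\cap\calV_{z_i}$ and their pairwise overlaps rather than with the auxiliary points $z_i$.
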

\begin{proof}
$M_{reg,q}$ is open as it is the intersection of two open sets. Since
$M_{reg}$ is dense in $M$, it suffices, in order to prove that $M_{reg,q}$
is dense in $M$, to show that $M_{reg}\subset \overline{M_{reg,q}}$. For any
$x \in M_{reg}$, there exists, by lemma \ref{localversion}, an open
neighbourhood $\calV_x$ of $x$ in $M_{reg}$ such that $M_{reg,q}\cap\calV_x$
is dense in $\calV_x$, so that $x$ is the limit of a sequence of points of
$M_{reg,q}$, which proves that $M_{reg}\subset \overline{M_{reg,q}}$.\\ Let
us now prove that $M_{reg,q}$ is connected. Take $x,y \in M_{reg,q}$. As
$M_{reg}$ is connected, there exists a path $c:[0,1]\to M_{reg}$ joining $x$
to $y$ in $M_{reg}$, and we set $\hat{c}:=c([0,1])$. Then, as in the proof
of lemma \ref{localversion}, there exists a finite open cover
$\hat{c}=\calV_{z_1}\cup\ \ldots \ \cup \calV_{z_p}$ with $z_1=x$ and
$z_p=y$. Then $\calV_{z_1}\cap\calV_{z_i}\not=\emptyset$ for some $i\geq 2$
and by density one has
$M_{reg,q}\cap(\calV_{z_1}\cap\calV_{z_i})\not=\emptyset$. By connectedness
of $\calV_{z_1}\cap M_{reg,q}$, any $z'_i\in
M_{reg,q}\cap(\calV_{z_1}\cap\calV_{z_i})$ can be joined to $z_1$ by a path
in $M_{reg,q}$. Likewise, this $z_i'$ can be joined to $z_i$ by a path in $M_{reg,q}$, therefore $z_1$ can be joined to $z_i$ by a path in $M_{reg.q}$. Repeating this, we obtain a path from $z_1=x$ to $z_p=y$ in
$M_{reg,q}$.\\
Finally, we have proved that $M_{reg,q}$ is connected in $M$, and
we have $M_{reg,q}\subset M_q \subset M=\overline{M_{reg,q}}$, which proves
that $M_q$ is connected in $M$. 
\end{proof}
Instead of describing all of $\mu(M_q)$, what we are really
interested in is describing $\mu(M_q)\cap\exp(\clWalc)\subset\Sigma_q
\cap\exp(\clWalc)$. Recall that $\clWalc$ is a convex polyhedron of
$\mathfrak{t}$, which can de described entirely in terms of roots of $(U,T)$
(see \ref{structure_Walc}). Moreover, we know from proposition
\ref{Sigma_j} that the intersection of $\Sigma_q$ with $\exp(\clWalc)$ is a
finite disjoint union of submanifolds of $U$:
\begin{eqnarray*}
\Sigma_q\cap \exp(\clWalc) & = &
\bigsqcup_{S\ |\ \dim\, U - \dim\, U_S=q} \exp(\Walc_S)\\
& = & \exp(\Walc_{S^{(1)}})
\sqcup \ \ldots\ \sqcup \exp(\Walc_{S^{(m)}})
\end{eqnarray*}
 (where $U_S$ is the stabilizer of any element in $\exp(\Walc_S)$,
see definition \ref{US}), so that we have:
$$\mu(M_q)\cap\exp(\clWalc) \subset \exp(\Walc_{S^{(1)}})
\sqcup \ \ldots\ \sqcup \exp(\Walc_{S^{(m)}})$$ and we now want to study
points in each $\exp(\clWalc_{S^{(i)}})$ which lie in the image of $\mu$. To
that end, we set, for all $i\in \{1,\ {\ldots} \ ,m\}$: $$M_{S^{(i)}}
:=\mu^{-1}\big(\exp(\Walc_{S^{(i)}})\big).$$ By definition we have
$M_{S^{(i)}} \subset M_q$, and since $M_q$ is $U$-stable, we have $u.x\in
M_q$ for all $x\in M_{S^{(i)}}$ and all $u\in U$.
\begin{lem}\label{MSi}
If $M_{S^{(i)}}\not=\emptyset$, it is a submanifold of $M$, and for every
open set $\calO$ of $\MSi$, the set $$U.\calO := \{u.x\, :\,u\in U, x\in
\calO\}$$ is open in $M_q$.
\end{lem}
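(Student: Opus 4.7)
The plan is to reduce both claims to transversality statements that live inside $M_q$, exploiting the fact (already established above) that $M_q$ is open in $M$ and that $\mu$ maps $M_q$ into $\Sigma_q$. Writing $\MSi = (\mu|_{M_q})^{-1}(\exp(\Walc_{S^{(i)}}))$, I would first show that $\mu|_{M_q}:M_q \to \Sigma_q$ is transverse to the submanifold $\exp(\Walc_{S^{(i)}}) \subset \Sigma_q$. By Proposition \ref{Sigma_j} the ambient tangent space splits as $T_u\Sigma_q = T_u(U.u) \oplus T_u\exp(\Walc_{S^{(i)}})$, so transversality only requires $T_u(U.u) \subset \im T_x\mu$; this is immediate from the $U$-equivariance of $\mu$, since for $X \in \fraku$ one has $T_x\mu(X_M(x)) = X_U(\mu(x))$, and these vectors sweep out $T_u(U.u)$ as $X$ varies over $\fraku$. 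This endows $\MSi$ with the structure of a submanifold of $M_q$, hence of $M$, of codimension $q$.

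For the second assertion, the natural object to introduce is the associated-bundle map
$$\phi : U \times_{U_{S^{(i)}}} \MSi \longto M_q, \qquad [v,x] \longmapsto v \cdot x.$$
This is well-defined because $U_{S^{(i)}}$ preserves $\MSi$: by Proposition \ref{stabilizer}, every element of $U_{S^{(i)}}$ commutes with every $u \in \exp(\Walc_{S^{(i)}})$, so for $v \in U_{S^{(i)}}$ and $x \in \MSi$ one has $\mu(v \cdot x) = v\mu(x)v^{-1} = \mu(x) \in \exp(\Walc_{S^{(i)}})$. Showing that $\phi$ is an open map immediately yields that $U \cdot \calO = \phi(U \times_{U_{S^{(i)}}} \calO)$ is open in $M_q$, since $U \times_{U_{S^{(i)}}} \calO$ is open in the total space of the bundle.

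The remaining point, and the main technical step, is to verify that $\phi$ is a submersion. At a point $[e,x_0]$ with $x_0 \in \MSi$, the image of $d\phi$ equals $T_{x_0}(U.x_0) + T_{x_0}\MSi$, and I would check that this coincides with $T_{x_0}M$ by a dimension count. The intersection $T_{x_0}(U.x_0) \cap T_{x_0}\MSi$ plainly contains the fundamental vector fields coming from $\mathrm{Lie}(U_{S^{(i)}})$, which are tangent to $\MSi$ by the previous paragraph, giving a subspace of dimension $\dim U_{S^{(i)}} - \dim U_{x_0}$. For the reverse inclusion one invokes the directness of Proposition \ref{Sigma_j}: if $X_M(x_0) \in T_{x_0}\MSi$, then $T_{x_0}\mu(X_M(x_0)) = X_U(\mu(x_0))$ must lie in $T_{\mu(x_0)}\exp(\Walc_{S^{(i)}})$, and the direct sum then forces $X_U(\mu(x_0)) = 0$, i.e. $X \in \mathrm{Lie}(U_{S^{(i)}})$. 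Plugging into $\dim(A+B) = \dim A + \dim B - \dim(A\cap B)$ together with $\dim \MSi = \dim M - q$ gives exactly $\dim M$, confirming the submersion property at $[e,x_0]$; $U$-equivariance of $\phi$ then propagates it to every point. The only real obstacle is the bookkeeping of the stabilizer dimensions; once Proposition \ref{Sigma_j} supplies the slice structure on $\Sigma_q$, the argument mirrors its hamiltonian analogue in \cite{HNP}, with the $U$-equivariance of $\mu$ playing the role that the moment-map identity $\im T_x\mu = \fraku_x^\perp$ plays there.
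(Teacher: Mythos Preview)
Your argument is correct and follows the same overall architecture as the paper's: transversality of $\mu|_{M_q}$ to $\exp(\Walc_{S^{(i)}})$ (using Proposition~\ref{Sigma_j} and equivariance of $\mu$) for the submanifold claim, and the equality $T_xM = T_x(U.x) + T_x\MSi$ for the openness of $U.\calO$.

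The one place where your route differs is in establishing this tangent-space sum. You package it via the associated-bundle map $\phi$ and prove it by a dimension count, computing the intersection $T_{x_0}(U.x_0)\cap T_{x_0}\MSi$ explicitly as the fundamental vectors coming from $\mathrm{Lie}(U_{S^{(i)}})$. The paper instead argues directly: for $v\in T_xM$, it decomposes $T_x\mu.v=\xi_1+\xi_2$ along the direct sum of Proposition~\ref{Sigma_j}, lifts $\xi_1$ to some $v_1\in T_x(U.x)$ via equivariance, and observes that $v-v_1$ lands in $(T_x\mu)^{-1}(T_{\mu(x)}\exp(\Walc_{S^{(i)}}))=T_x\MSi$. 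This avoids computing the intersection or invoking the inclusion $\fraku_{x_0}\subset\mathrm{Lie}(U_{S^{(i)}})$ that your dimension count implicitly uses. Your approach buys a more structural statement (that $\phi$ is a submersion, hence open), while the paper's is shorter and needs no bookkeeping of stabilizer dimensions; both rest on exactly the same inputs.
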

\begin{proof}
Recall that $\exp(\Walc_{S^{(i)}})$ is a submanifold of $\Sigma_q$ and that
for all $u\in \exp(\Walc_{S^{(i)}})$, one has (see proposition
\ref{Sigma_j}): $$T_u \Sigma_q=T_u\big(U.u\big)\oplus
T_u\big(\exp(\Walc_{S^{(i)}})\big).$$
Moreover, $M_{S^{(i)}}=\mu^{-1}(\exp(\Walc_{S^{(i)}}))$, where $\mu$ is seen
as a map $\mu: M_q\to \Sigma_q$. Hence for all $x\in M_{S^{(i)}}$:
$$T_{\mu(x)}\Sigma_q = T_{\mu(x)}\big(U.\mu(x)\big)\oplus
T_{\mu(x)}\big(\exp(\Walc_{S^{(i)}})\big).$$ But:
$$T_{\mu(x)}\big(U.\mu(x)\big) = T_x\mu.\big(T_x(U.x)\big)\subset \im T_x\mu$$
(where the equality follows from the equivariance of $\mu$), so that, for
all $x\in M_{S^{(i)}}$: $$\im T_x\mu +
T_{\mu(x)}\big(\exp(\Walc_{S^{(i)}})\big)=T_{\mu(x)}\Sigma_q$$ which means
that the map $\mu$ is transverse to the submanifold $\exp(\Walc_{S^{(i)}})$.
By the transversality theorem,
$M_{S^{(i)}}=\mu^{-1}(\exp(\Walc_{S^{(i)}}))$ is therefore a submanifold of
$M_q$, hence of $M$, and one has, for all $x\in M_{S^{(i)}}$:
\begin{eqnarray}\label{imrec}
& T_x M_{S^{(i)}} =
\big(T_x\mu\big)^{-1}\Big(T_{\mu(x)}\big(\exp(\Walc_{S^{(i)}})\big)\Big). & 
\end{eqnarray}
 Moreover, since $\mu(M_q)\subset \Sigma_q$, one has, for such an
$x\in M_{S^{(i)}}\subset M_q$: $$\im T_x\mu \subset T_{\mu(x)}\Sigma_q =
T_{\mu(x)} \big(U.\mu(x)\big)\oplus
T_{\mu(x)}\big(\exp(\Walc_{S^{(i)}})\big).$$ Consequently, for all $v\in T_x
M=T_x M_q$, one has: $$T_x\mu.v=\xi_1+\xi_2$$ where: $$\xi_1 \in
T_{\mu(x)}\big(U.\mu(x)\big)=T_x\mu\big(T_x(U.x)\big) \quad \mathrm{and}\quad \xi_2\in
T_{\mu(x)}\big(\exp(\Walc_{S^{(i)}})\big).$$ Hence $\xi_2=T_x\mu.(v-v_1)$ for
some $v_1\in T_x(U.x)$. Set $v_2:=v-v_1$. Then one has $T_x\mu.v_2 =\xi_2\in
T_{\mu(x)}(\exp(\Walc_{S^{(i)}}))$ and therefore: $$v_2\in
\big(T_x\mu\big)^{-1}\Big(T_{\mu(x)}\big(\exp(\Walc_{S^{(i)}})\big)\Big)=T_x
\MSi.$$ Hence: $$v=v_1 + v_2 \in T_x(U.x) + T_x \MSi.$$ And therefore:
\begin{eqnarray}\label{tangentspace} 
& T_x M = T_x(U.x) + T_x \MSi. &
\end{eqnarray}
One may observe that this sum is generally not a direct
sum, since $\ker T_x\mu\subset T_x\MSi$ and $\ker T_x\mu\cap T_x(U.x)\not=\{0\}$ in general (namely, $\ker T_x\mu\cap T_x(U.x)$ consists of the values at $x\in M$ of fundamental vector fields $X^{\#}_x=\frac{d}{dt}|_{t=0}\exp(tX).x$ for $X$ satisfying $Ad\, \mu(x).X=X$, see \cite{AMM}).
Equality (\ref{tangentspace}) then shows that if $\calO$ is an open subset of $\MSi$ containing $x$,
then $U.\calO$ contains an open subset $\calV_x$ of $M$ containing $x$. Then
for all $u\in U$, $u.\calV_x$ is an open subset of $M$ containing $u.x$ and
contained in $U.\calO$, which shows that $U.\calO$ is open in $M$.
\end{proof}
 In particular, $U.\MSi$ is open in $M$. And we have:
\begin{lem}\label{reunion}
$M_q=\bigsqcup_{i=1}^m U.\MSi$
\end{lem}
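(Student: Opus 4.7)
My plan is to leverage Proposition \ref{alcove}, which asserts that $\exp(\clWalc)$ is a fundamental domain for the conjugation action of $U$ on itself, combined with the decomposition $\Sigma_q \cap \exp(\clWalc) = \bigsqcup_{i=1}^m \exp(\Walc_{S^{(i)}})$ already recorded in the paragraph preceding the definition of $\MSi$. The lemma is essentially a set-theoretic consequence of these two facts plus the equivariance of $\mu$, so I would not expect any serious analytic obstacle.

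First I would check the easy inclusion $U.\MSi \subset M_q$ for each $i$. Since $\mu(\MSi) \subset \exp(\Walc_{S^{(i)}}) \subset \Sigma_q$, one has $\MSi \subset \mu^{-1}(\Sigma_q) = M_q$; as $\Sigma_q$ is stable under conjugation and $\mu$ is $U$-equivariant, $M_q$ is a $U$-stable subset of $M$, whence $U.\MSi \subset M_q$. Taking the union over $i$ gives one direction.

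For the reverse inclusion, let $x \in M_q$, so that $\mu(x) \in \Sigma_q$. Because $\exp(\clWalc)$ is a fundamental domain for the conjugation action, there exists $u \in U$ with $u\mu(x)u^{-1} \in \exp(\clWalc)$, and this conjugate must lie in $\Sigma_q \cap \exp(\clWalc)$, hence in exactly one of the cells $\exp(\Walc_{S^{(i)}})$. By equivariance, $\mu(u.x) = u\mu(x)u^{-1} \in \exp(\Walc_{S^{(i)}})$, so $u.x \in \MSi$ and $x = u^{-1}.(u.x) \in U.\MSi$.

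Finally, for disjointness, suppose $x \in U.\MSi \cap U.M_{S^{(j)}}$, with $x = u.y = v.z$ for some $y \in \MSi$, $z \in M_{S^{(j)}}$ and $u,v \in U$. Then $\mu(y) = u^{-1}\mu(x)u$ and $\mu(z) = v^{-1}\mu(x)v$ are conjugate in $U$, yet both lie in the fundamental domain $\exp(\clWalc)$; uniqueness of the representative forces $\mu(y) = \mu(z)$, and since the cells $\exp(\Walc_{S^{(i)}})$ are pairwise disjoint inside $\exp(\clWalc)$, this gives $i = j$. Combining the three points yields the disjoint union $M_q = \bigsqcup_{i=1}^m U.\MSi$.
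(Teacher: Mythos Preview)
Your proof is correct and follows essentially the same approach as the paper: both directions of the set equality are obtained exactly as you do, via $U$-invariance of $M_q$ for one inclusion and the fundamental-domain property of $\exp(\clWalc)$ together with equivariance of $\mu$ for the other. You additionally spell out the disjointness of the sets $U.\MSi$, which the paper leaves implicit.
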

\begin{proof}
Let us prove that $M_q \subset \sqcup_{i=1}^m U.\MSi$, the other inclusion
following from the facts that $\MSi\subset M_q$ and that $M_q$ is
$U$-invariant. Consider an element $x\in M_q$. Then $\mu(x)\in \Sigma_q$.
But there exists $u\in U$ such that $u.\mu(x)\in \exp(\clWalc)$, so that
$\mu(u.x) = u.\mu(x) \in \Sigma_q\cap
\exp(\clWalc)=\sqcup_{i=1}^m\exp(\Walc_{S^{(i)}})$ hence $x\in
\sqcup_{i=1}^m U.M_{S^{(i)}}$.
\end{proof}
 Lemmas \ref{MSi} and \ref{reunion} have the following remarkable
consequence:
\begin{lem}\label{MS}
Exactly one of the $\MSi$ is non-empty. 
\end{lem}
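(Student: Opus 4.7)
The plan is to combine the decomposition of $M_q$ given by lemma \ref{reunion} with the openness property from lemma \ref{MSi} and the connectedness of $M_q$ established in lemma \ref{Mq}. Concretely, we already know that $M_q = \bigsqcup_{i=1}^m U.M_{S^{(i)}}$, that each $U.M_{S^{(i)}}$ (when non-empty) is open in $M_q$, and that $M_q$ is a non-empty connected subset of $M$. Writing a non-empty connected space as a disjoint union of open subsets forces all but one of them to be empty, which is exactly the content of the lemma.

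The one point that deserves a careful check is the genuine disjointness of the union $\bigcup_{i=1}^m U.M_{S^{(i)}}$, as lemma \ref{reunion} only states it as a union (written with a $\sqcup$, but not explicitly justified). Suppose $x \in U.M_{S^{(i)}} \cap U.M_{S^{(j)}}$ with $i \neq j$; then we can write $x = u.y = v.z$ with $y \in M_{S^{(i)}}$, $z \in M_{S^{(j)}}$ and $u,v \in U$. Applying $\mu$ and using equivariance, the elements $\mu(y) \in \exp(\Walc_{S^{(i)}})$ and $\mu(z) \in \exp(\Walc_{S^{(j)}})$ are conjugate in $U$. But by proposition \ref{alcove}, $\exp(\clWalc)$ is a fundamental domain for the conjugacy action, so it meets each conjugacy class in exactly one point; hence $\mu(y) = \mu(z)$. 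This contradicts the disjointness of the cells $\exp(\Walc_{S^{(i)}})$ and $\exp(\Walc_{S^{(j)}})$ for $i \neq j$ (definition \ref{cells}).

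Once disjointness is established, the conclusion is immediate. By lemma \ref{Mq}, $M_q$ is connected, and it is non-empty since $q = \max\{\dim U.\mu(x) : x \in M\}$ is attained. Each non-empty $M_{S^{(i)}}$ contributes an open subset $U.M_{S^{(i)}} \subset M_q$ by lemma \ref{MSi}. Connectedness of $M_q$ together with the disjoint open cover $M_q = \bigsqcup_{i=1}^m U.M_{S^{(i)}}$ forces exactly one of the sets $U.M_{S^{(i)}}$—and hence exactly one of the $M_{S^{(i)}}$—to be non-empty.

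I expect the main (mild) obstacle to be formulating the disjointness argument cleanly; once that is in place, the rest is a one-line application of connectedness. No further ingredients beyond the structure of $\exp(\clWalc)$ as a fundamental domain and the already-proven lemmas \ref{MSi}, \ref{reunion}, \ref{Mq} are needed.
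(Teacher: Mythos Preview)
Your proof is correct and follows exactly the same approach as the paper: combine the decomposition from lemma \ref{reunion}, the openness from lemma \ref{MSi}, and the connectedness of $M_q$ from lemma \ref{Mq}. You are in fact slightly more careful than the paper, which simply writes $\sqcup$ without justification; your disjointness argument via the fundamental-domain property of $\exp(\clWalc)$ is the right way to fill that gap.
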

\begin{proof}
$M_q=\sqcup_{i=1}^m U.\MSi$ by lemma \ref{reunion}, and $U.\MSi$ is open in
$M_q$ by lemma \ref{MSi}. But $M_q$ is connected by lemma \ref{Mq}, so that
only one of the $U.\MSi$, and consequently only one of the $\MSi$, can be
non-empty.
\end{proof}
\begin{cor}\label{UMS}
It follows from lemmas \ref{reunion} and \ref{MS} that $M_q=U.M_{S^{(i_0)}}$
for a unique $i_0\in \{1,\ {\ldots} \ ,m\}$. It then follows from lemma
\ref{Mq} that $U.M_{S^{(i_0)}}$ is an open, connected, and dense subset of $M$.
\end{cor}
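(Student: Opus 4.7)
The plan is to deduce both assertions by chaining together Lemmas \ref{reunion}, \ref{MS}, and \ref{Mq}, with essentially no new work required. First I would invoke Lemma \ref{reunion} to obtain the disjoint decomposition $M_q = \bigsqcup_{i=1}^m U.M_{S^{(i)}}$. Next, Lemma \ref{MS} tells me that exactly one of the submanifolds $M_{S^{(i)}}$ is non-empty; calling its index $i_0$, the disjointness of the union collapses everything to $M_q = U.M_{S^{(i_0)}}$. Uniqueness of $i_0$ is built into Lemma \ref{MS}, so no further argument is needed (one could note for completeness that $M_q$ itself is non-empty by the very definition of $q$ as a maximum over the non-empty set $M$, which is what guarantees the existence of $i_0$).

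For the topological properties, I would then appeal to Lemma \ref{Mq}, which already asserts that $M_q$ is open, connected, and dense in $M$. Combining this with the equality $M_q = U.M_{S^{(i_0)}}$ just established, the three properties transfer verbatim to $U.M_{S^{(i_0)}}$.

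I do not anticipate any real obstacle here: the corollary is a direct bookkeeping consequence of the three preceding lemmas, whose statements have been arranged precisely so that the conclusion is immediate. The genuine content sits in the proofs of Lemmas \ref{MSi}, \ref{reunion}, and \ref{MS} (respectively the transversality argument showing $U.\MSi$ is open in $M_q$, the orbit decomposition using that $\exp(\clWalc)$ is a fundamental domain, and the use of connectedness of $M_q$ to force all but one of the open pieces $U.\MSi$ to be empty), and nothing beyond the statements of these lemmas needs to be reopened to close the corollary.
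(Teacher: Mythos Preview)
Your proposal is correct and matches the paper's approach exactly: the corollary is stated without a separate proof environment, the reasoning being contained in its very formulation, and your chaining of Lemmas \ref{reunion}, \ref{MS}, and \ref{Mq} reproduces precisely that reasoning.
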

 From now on, we simply denote $S^{(i_0)}$ by $S$. The submanifold
$M_S:=\mu^{-1}(\exp(\Walc_S))$ will end up being our symplectic slice.
We first prove the following result:
\begin{lem}\label{U-action}
If $x\in M_S$ and $u\in U$ are such that $u.x\in M_S$, then $u\in U_S$
(where $U_S$ is the stabilizer of any element in $\exp(\Walc_S)$,
see definition \ref{US}).
\end{lem}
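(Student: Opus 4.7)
The plan is to exploit the equivariance of $\mu$ together with the fact that $\exp(\clWalc)$ is a fundamental domain for conjugation, so that two conjugate elements of $\exp(\clWalc)$ must actually coincide.

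First, since $\mu$ is $U$-equivariant with respect to the conjugation action on $U$, we have $\mu(u.x) = u\mu(x)u^{-1}$. The hypotheses $x\in M_S$ and $u.x\in M_S$ give $\mu(x)\in\exp(\Walc_S)$ and $u\mu(x)u^{-1}\in\exp(\Walc_S)$. In particular, both elements lie in $\exp(\clWalc)$.

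Second, I invoke Proposition \ref{alcove}: the set $\exp(\clWalc)$ is a fundamental domain for the conjugation action of $U$ on itself, so each conjugacy class meets $\exp(\clWalc)$ in exactly one point. Since $\mu(x)$ and $u\mu(x)u^{-1}$ are two conjugate elements of $\exp(\clWalc)$, they must be equal. Thus $u$ lies in the centralizer $U_{\mu(x)}$ of $\mu(x)$.

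Finally, by Proposition \ref{stabilizer} and Definition \ref{US}, the centralizer of any element of $\exp(\Walc_S)$ depends only on the cell, and is precisely $U_S$. Since $\mu(x)\in\exp(\Walc_S)$, we conclude $u\in U_{\mu(x)}=U_S$, as required. There is no real obstacle here: the argument is purely a consequence of the fundamental-domain property combined with equivariance and with the cell-constancy of centralizers established in Section \ref{structure_Walc}.
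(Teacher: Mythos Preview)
Your proof is correct and follows essentially the same approach as the paper's own proof: equivariance of $\mu$ places both $\mu(x)$ and $u.\mu(x)$ in $\exp(\clWalc)$, the fundamental-domain property forces them to coincide, and the cell-constancy of centralizers (Definition~\ref{US}) gives $u\in U_S$. The paper's version is simply more terse and leaves the references to Propositions~\ref{alcove} and~\ref{stabilizer} implicit.
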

\begin{proof}
If $x\in M_S$ and $u\in U$ are such that $u.x \in M_S$, then $\mu(x)$ and
$u.\mu(x)=\mu(u.x)$ are both elements of $\exp(\Walc_S)\subset
\exp(\clWalc)$, hence $\mu(x)=u.\mu(x)$ that is, $u$ stabilizes some element
of $\exp(\Walc_S)$. Consequently, $u\in U_S$.
\end{proof}
 Together with the fact that $U_S$ is connected, being the
centralizer of an element of a compact connected simply connected Lie group
(see proposition \ref{centralizer}), lemma \ref{U-action} has the following
important consequence:
\begin{lem}\label{MSconnexe}
The manifold $M_S$ is connected.
\end{lem}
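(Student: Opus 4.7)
The plan is to argue by contradiction, exploiting the three facts already established: that $U \cdot M_S = M_q$ is connected (Corollary \ref{UMS}), that $U \cdot \mathcal{O}$ is open in $M$ whenever $\mathcal{O}$ is open in $M_S$ (Lemma \ref{MSi}), and that the stabilizer $U_S$ is connected (Lemma \ref{centralizer}, since $U$ is compact connected and simply connected and $U_S$ is a centralizer). The link between these ingredients is Lemma \ref{U-action}, which forces any element of $U$ sending one point of $M_S$ to another to lie in $U_S$.

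Concretely, suppose for contradiction that $M_S = A \sqcup B$ with $A$ and $B$ nonempty, open, and disjoint in $M_S$. Applying Lemma \ref{MSi} to both, the saturations $U.A$ and $U.B$ are open subsets of $M$, and their union is $U.M_S = M_q$. Since $M_q$ is connected (Corollary \ref{UMS}), we must have $U.A \cap U.B \neq \emptyset$. Pick $a \in A$, $b \in B$, and $u,u' \in U$ with $u.a = u'.b$, so that $a = (u^{-1}u').b$. Setting $v := u^{-1}u'$, we have $v.b = a \in M_S$ while $b \in M_S$, hence Lemma \ref{U-action} yields $v \in U_S$.

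Now observe that $U_S$ preserves $M_S$: every element of $U_S$ fixes every point of $\exp(\Walc_S)$ pointwise (this is the very definition adopted in \ref{US}, justified by Proposition \ref{stabilizer}), so for $x \in M_S$ and $u_0 \in U_S$ one has $\mu(u_0.x) = u_0.\mu(x) = \mu(x) \in \exp(\Walc_S)$, whence $u_0.x \in M_S$. Consequently the orbit $U_S.b$ is contained in $M_S$, and since $U_S$ is connected, $U_S.b$ is a connected subset of $M_S$. But it meets $B$ (it contains $b$) and also meets $A$ (it contains $v.b = a$), which contradicts $M_S = A \sqcup B$.

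The proof is essentially a one-paragraph clopen argument, so I do not anticipate a genuine obstacle; the only subtlety worth flagging explicitly is the verification that $U_S$ acts on $M_S$, which is where Proposition \ref{stabilizer} enters (without it, $U_S$ would only be known to \emph{permute} the conjugacy class of a given $\mu(x)$, not to fix $\exp(\Walc_S)$ pointwise, and the orbit $U_S.b$ would a priori leave $M_S$).
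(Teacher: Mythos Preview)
Your proof is correct and follows essentially the same route as the paper: split $M_S$ into two open pieces, saturate by $U$ and use that $U.M_S=M_q$ is connected (Corollary~\ref{UMS}) to force an overlap, then invoke Lemma~\ref{U-action} and the connectedness of $U_S$ (acting on $M_S$) to produce a connected set meeting both pieces. The only cosmetic difference is that the paper phrases the last step via a path $t\mapsto u_t.x_1$ in $U_S$ rather than the full orbit $U_S.b$, but the content is identical.
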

\begin{proof}
Assume that $M_S=M_S^{(1)} \sqcup M_S^{(2)}$ is the disjoint union of two
open subsets of $M_S$. Then, by lemma \ref{MSi}, $U.M_S^{(i)}$ is open in
$M$. If $(U.M_S^{(1)})\cap(U.M_S^{(2)})\not=\emptyset$, there exist $x_1\in
M_S^{(1)}$, $x_2\in M_S^{(2)}$ and $u_1,u_2\in U$ such that
$u_1.x_1=u_2.x_2$, hence $u_2^{-1}u_1.x_1=x_2$. But then, by lemma
\ref{U-action}, $u_2^{-1}u_1 \in U_S$, which is connected by proposition
\ref{centralizer}. Therefore, there is a path $(u_t)$ joining $1$ to $u_2^{-1}u_1$
in $U_S$, hence $u_t.x_1$ is a path joining $x_1$ to $x_2$ in $M_S$ (observe that $U_S$ acts on $M_S=\mu^{-1}(\exp(\Walc_S))$, as $\mu$ is an equivariant map), which
contradicts the fact that $x_1$ and $x_2$ lie in disjoint open subsets of $M_S$.
Therefore, $(U.M_S^{(1)})\cap(U.M_S^{(2)})=\emptyset$ and: $$U.M_S =
(U.M_S^{(1)}) \sqcup (U.M_S^{(2)})$$ with $U.M_S^{(i)}$ open in $M$. But
$U.M_S$ is open in $M$  and connected by corollary \ref{UMS}, so that
$U.M_S^{(i)}=\emptyset$ for $i=1$ or $i=2$. Therefore, one of the
$M_S^{(i)}$ is empty, which proves the lemma.
\end{proof}
 We now want to study precisely the relation between $\mu(M_S)$ and
$\mu(M)\cap\exp(\clWalc)$, which was our initial motivation. Recall that
$\mu(M_S)\subset\exp(\Walc_S)\subset \exp(\clWalc)$, the latter being closed
in $U$.
\begin{lem}\label{densite}
If $\mu$ is a closed map (in particular, if $\mu$ is proper), one has:
$\overline{\mu(M_S)}=\mu(M)\cap\exp(\clWalc).$
\end{lem}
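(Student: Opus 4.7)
The plan is to establish both inclusions of the claimed equality separately, leaning on two key inputs from earlier in the paper: the fact, recorded in Proposition \ref{alcove}, that $\exp(\clWalc)$ is closed in $U$ and is a fundamental domain for the conjugacy action, and the fact, recorded in Corollary \ref{UMS}, that $M_q = U.M_S$ is a dense (open and connected) subset of $M$.

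The inclusion $\overline{\mu(M_S)} \subseteq \mu(M) \cap \exp(\clWalc)$ is essentially formal: since $\mu$ is closed, $\mu(M)$ is closed in $U$, and
$$\mu(M_S) \subseteq \mu(M) \cap \exp(\Walc_S) \subseteq \mu(M) \cap \exp(\clWalc),$$
which is a closed subset of $U$, so the inclusion passes to the closure.

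For the reverse inclusion, I would take $u \in \mu(M) \cap \exp(\clWalc)$ and write $u = \mu(x)$ for some $x \in M$. Using the density of $U.M_S$ in $M$, I would choose a sequence $x_n = u_n.y_n \to x$ with $u_n \in U$ and $y_n \in M_S$. Equivariance and continuity of $\mu$ give $\mu(x_n) = u_n.\mu(y_n) \to u$, with $\mu(y_n) \in \exp(\Walc_S)$. By compactness of $U$, one may pass to a subsequence along which $u_n \to u_\infty$, and then $\mu(y_n) = u_n^{-1}.\mu(x_n) \to u_\infty^{-1}.u =: v_\infty$. Since each $\mu(y_n)$ lies in $\exp(\Walc_S) \subseteq \exp(\clWalc)$ and the latter is closed, the limit $v_\infty$ belongs to $\exp(\clWalc)$. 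The identity $u = u_\infty.v_\infty$ then expresses $u$ as a conjugate of $v_\infty$, and since both $u$ and $v_\infty$ lie in the fundamental domain $\exp(\clWalc)$, Proposition \ref{alcove} forces $u = v_\infty$. Hence $u = \lim \mu(y_n) \in \overline{\mu(M_S)}$, as desired.

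The one step requiring care is the final identification $u = v_\infty$: density of $U.M_S$ only produces approximations of $u$ by $U$-translates of elements of $\mu(M_S)$, and it is precisely the fundamental-domain property of $\exp(\clWalc)$ that lets us transfer convergence from those conjugates to the elements of $\mu(M_S)$ themselves. The compactness of $U$, which allows the extraction of $u_n \to u_\infty$, is what makes this transfer possible.
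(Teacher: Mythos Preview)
Your proof is correct and follows essentially the same approach as the paper: both directions are handled identically, using the closedness of $\mu$ for the easy inclusion and, for the other, the density of $U.M_S$ in $M$ together with compactness of $U$ and the fundamental-domain property of $\exp(\clWalc)$ to pass from a convergent sequence of conjugates to convergence in $\mu(M_S)$ itself. The only cosmetic difference is notation: the paper writes $x_j\to x$ with $u_j.x_j\in M_S$, whereas you write $x_n=u_n.y_n$ with $y_n\in M_S$, which amounts to relabelling by inverses.
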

\begin{proof}
Take $\mu(x)\in \mu(M)\cap\exp(\clWalc)$. Since $M_q=U.M_S$ is dense in $M$ by
corollary \ref{UMS}, there exist a sequence $(x_j)_{j\in\N}$ of elements of
$M_q$ and a sequence $(u_j)_{j\in\N}$ of elements of $U$ such that $x=\lim\,
x_j$ and $u_j.x_j \in M_S$. Since $U$ is compact, we may assume that $(u_j)$
is convergent and denote its limit by $u:=\lim\, u_j$. Then:
$$u.\mu(x)=\mu(u.x)=\mu\big(\lim\, (u_j.x_j)\big)=\lim\, \mu(u_j.x_j) \in
\overline{\mu(M_S)}.$$ In particular, $u.\mu(x)\in\exp(\clWalc)$, so that
$u.\mu(x)=\mu(x)$, since $\exp(\clWalc)$ is a fundamental domain. Hence
$\mu(x)\in\overline{\mu(M_S)}$, so that $\mu(M)\cap\exp(\clWalc)\subset
\overline{\mu(M_S)}$.\\ Conversely, since $\mu$ is a closed map, $\mu(M)$ is
closed in $U$ and so is $\mu(M)\cap\exp(\clWalc)$. But
$\mu(M_S)\subset\mu(M)\cap\exp(\clWalc)$, hence $\overline{\mu(M_S)}\subset
\mu(M)\cap\exp(\clWalc)$.
\end{proof}
Observe that lemma \ref{densite} is a consequence of corollary
\ref{UMS} and of the fact that $\mu(M_S)\subset \exp(\clWalc)$. This last
point also means that under the identification $U/\Int(U)\simeq\clWalc$,
the map $$\widetilde{\mu}:=p\circ\mu|_{M_S}:M_S\longto U/\Int(U)\simeq
\clWalc\subset\mathfrak{t}=Lie(T)$$ is simply
$\widetilde{\mu}=\exp^{-1}\circ\mu|_{M_S}$. As a matter of fact, it follows
from the definition of $M_S$ that $\mu(M_S)$ lies in the
submanifold $\exp(\Walc_S)$ of $U$, which is diffeomorphic to $\Walc_S$
under $\exp^{-1}$, so that $\widetilde{\mu}=\exp^{-1}\circ\mu|_{M_S}$ is a
smooth map from $M_S$ to $\mathfrak{t}$: 
\begin{center}$\xymatrix{
& {\Walc_S} \ar[d]_{\exp}^{\simeq} \\
{M_S} \ar[r]_{\mu|_{M_S}} \ar[ur]^{\tmu} & {\exp(\Walc_S)}
}$\end{center}
We now compute the differential of $\tmu$,
which is defined to be the composed map $d\widetilde{\mu}:=pr\circ
T\widetilde{\mu}$ of the tangent map $T\tmu : TM_S\to
T\mathfrak{t}\simeq\mathfrak{t}\times\mathfrak{t}$ and the projection
$pr:T\mathfrak{t}\simeq\mathfrak{t}\times\mathfrak{t}\to \mathfrak{t}$ onto
the second factor.
\begin{lem}\label{diff}
The differential $d\tmu$ of $\tmu$ is equal to the $\mathfrak{t}$-valued
$1$-form $\mu^*\theta$ on $M_S$, where $\theta$ is the Maurer-Cartan
$1$-form on $T$, that is, the $\mathfrak{t}$-valued $1$-form defined for
$t\in T$ and $\xi\in T_tT$ by $\theta_t(\xi)=t^{-1}.\xi=\xi.t^{-1}$:
$$d\tmu = \mu^*\theta.$$
\end{lem}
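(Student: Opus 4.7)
The plan is a direct chain rule computation. Since $\mu(M_S) \subset \exp(\Walc_S) \subset T$ and, by proposition \ref{alcove}, $\exp$ restricts to a diffeomorphism from $\Walc_S$ onto $\exp(\Walc_S)$, the map $\mu|_{M_S}$ factors as $\mu|_{M_S} = \exp \circ \tmu$ by construction. The identity $d\tmu = \mu^*\theta$ is therefore a statement relating the derivative of this factorization to the Maurer-Cartan form of the abelian group $T$.

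I would fix $x \in M_S$ and $v \in T_x M_S$ and differentiate. By the chain rule applied to the factorization,
\[
T_x \mu(v) = T_{\tmu(x)} \exp\bigl(d\tmu_x(v)\bigr).
\]
The key point is that $T$ is \emph{abelian}, so that $\exp : (\frakt, +) \to T$ is a Lie group homomorphism whose derivative at $X \in \frakt$ is simply left translation by $\exp(X)$: under the identification $T_{\exp(X)} T \simeq \frakt$ given by $\theta$, one has $T_X\exp(Y) = \exp(X) \cdot Y$ for all $Y \in \frakt$. Substituting into the definition of the Maurer-Cartan form, one obtains
\[
(\mu^*\theta)_x(v) = \mu(x)^{-1} \cdot T_x\mu(v) = \exp(\tmu(x))^{-1} \cdot \exp(\tmu(x)) \cdot d\tmu_x(v) = d\tmu_x(v),
\]
which is exactly the desired equality.

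I do not foresee a real obstacle here: the lemma is a bookkeeping identity made transparent by the abelian nature of $T$, and in particular no use is made of the quasi-hamiltonian hypothesis on $\w$. Its purpose, I expect, is to translate the group-valued momentum map equation satisfied by $\mu$ (which is naturally expressed in terms of $\mu^*\theta$ and the right-invariant Maurer-Cartan form of $U$) into the standard hamiltonian momentum map equation for $\tmu : M_S \to \frakt$, thereby setting up the verification of assertions (iii) and (iv) of theorem \ref{symp_slice} and allowing the local-global arguments of Hilgert-Neeb-Plank to be applied to the symplectic slice in subsequent sections.
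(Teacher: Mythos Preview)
Your proof is correct and follows the same approach as the paper: both differentiate the factorization $\mu|_{M_S}=\exp\circ\tmu$ via the chain rule and use that $T$ is abelian so that $T_X\exp$ is left translation by $\exp(X)$. The only cosmetic difference is that the paper first writes down the general formula $T_X\exp.\xi=\exp(X)\cdot\bigl(\tfrac{1-e^{-\mathrm{ad}\,X}}{\mathrm{ad}\,X}\cdot(\xi-X)\bigr)$ and then notes that the $\mathrm{ad}$-series collapses in the abelian case, whereas you invoke the abelian simplification directly; the computations are otherwise identical, and your remark about the lemma's role in verifying part (iv) of theorem~\ref{symp_slice} is accurate.
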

\begin{proof}
Recall that the tangent map to the exponential map $\exp:\mathfrak{u}\to U$
is given, for all $X\in\mathfrak{u}$ and all $\xi\in
T_X\mathfrak{u}=X+\mathfrak{u}$, by: $$T_X\exp.\xi =
\exp(X).\Big(\frac{1-e^{-ad\, X}}{ad\, X}\cdot(\xi-X)\Big)$$
where $\frac{1-e^{-ad\, X}}{ad\, X}$ is the endomorphism
of $\mathfrak{u}$ given, for all $\zeta\in\mathfrak{u}$, by:
$$\frac{1-e^{-ad\, X}}{ad\, X}\cdot\zeta = \sum_{k=1}^{+\infty}\frac{(-ad\,
X)^{k-1}}{k!}\cdot\zeta$$ and where $\exp(X).\zeta$ denotes the effect on
tangent vectors $\zeta\in\mathfrak{u}=T_1U$ of the left translation of
element $\exp(X)$ in $U$. In the present case, we have to consider
$\exp:\mathfrak{t}\to T$ with $T$ abelian, since for
$x\in M_S$, we have $\mu(x)\in\exp(\Walc_S)\subset T$,
so that: $$\frac{1-e^{-ad\,
X}}{ad\, X}.\zeta = \zeta$$ as $(ad\, X)^{k-1}.\zeta=0$ as soon as $k-1\geq
1$. Then, for all $X\in \mathfrak{t}$ and all $\xi\in
T_X\mathfrak{t}=X+\mathfrak{t}$: $$T_X\exp.\xi=\exp(X).(\xi-X).$$ Therefore, for all
$x\in M_S$ and all $v\in T_xM_S$, we have:
\begin{eqnarray*}
T_x(\underset{=\mu}{\underbrace{\exp\circ\tmu}}).v & = & T_{\tmu(x)}\exp\circ
T_x\tmu . v \\
& = &
\exp\big(\tmu(x)\big).\big(\underset{=(d\tmu)_x.v}{\underbrace{T_x\tmu.v
-\tmu(x)}}\big).
\end{eqnarray*}
 So that: $$T_x\mu.v =
\exp\big(\tmu(x)\big).\big((d\tmu)_x.v\big).$$ Hence:
\begin{eqnarray*}
(d\tmu)_x.v & = & \Big(\exp\big(\tmu(x)\big)\Big)^{-1}.(T_x\mu.v)\\
& = & \theta_{\exp\circ\tmu(x)}(T_x\mu.v)\\
& = & \theta_{\mu(x)}(T_x\mu.v)\\
& = & (\mu^*\theta)_x.v.
\end{eqnarray*}
\end{proof}
We may now prove theorem \ref{symp_slice}:
\begin{proof}[\textbf{Proof of theorem \ref{symp_slice} (Existence of a
symplectic slice)}]
Let us set $N:=M_S$, where $M_S= $ $\mu^{-1}(\exp(\Walc_S))$ is the submanifold of $M$
constructed above.\\
(i) Lemma \ref{MSconnexe} shows that $N$ is connected.\\
(ii) Since $N=M_S=\mu^{-1}(\exp(\Walc_S))$ with $\mu$ equivariant,
and since the conjugacy action of $T$ on $\exp(\Walc_S)$ is trivial (as $T$
is abelian), we have that $N$ is $T$-stable.\\
(iii) Let us show that $\w|_{M_S}$ is a symplectic form. We denote by
$i$ the inclusion map $i:M_S\hookrightarrow M$, so that $i^*\w=\w|_{M_S}$.
Recall that $\qhamsp$ is a quasi-hamiltonian space, therefore $d\w=-\mu^*\chi$, where $\chi$ is the Cartan $3$-form of $U$ ($\chi_1(X,Y,Z)=(X\, |\, [Y,Z])$ for all $X,Y,Z\in \fraku=T_1U$).
First, we have: $$d(i^*\w)=i^*(d\w)=i^*(-\mu^*\chi)=-(\mu\circ i)^*\chi.$$
But $\mu\circ i=\mu|_{M_S}$ is $T$-valued and $\chi|_{T}=0$ as $T$ is abelian.
Therefore, $d(i^*\w)=0$. Second, let us show that $i^*\w$ is non-degenerate.
Take $x\in M_S$ and $v\in T_xM_S$ such that for all $w\in T_xM_S$,
$\w_x(v,w)=0$. In particular, $v\in (T_xM_S)^{\perp_{\w}}\subset T_xM$. But
we know from lemma \ref{MSi} (see (\ref{imrec}))  that:
$$T_xM_S=\big(T_x\mu\big)^{-1}\big(T_{\mu(x)}\exp(\Walc_S)\big).$$ Hence: $$\ker
T_x\mu=T_x\mu^{-1}(\{0\}) \subset T_xM_S.$$ And therefore:
$$(T_xM_S)^{\perp_{\w}} \subset (\ker T_x\mu)^{\perp_{\w}}.$$
Since $\qhamsp$ is a quasi-hamiltonian space, we have (see \cite{AMM}): 
\begin{eqnarray}\label{kernel_and_orbit}
(\ker T_x\mu)^{\perp_{\w}} =
\{X^{\#}_x\ : \ X\in \mathfrak{u}\} = T_x (U.x).
\end{eqnarray}
Take now $X\in\mathfrak{u}$ such that $v=X^{\#}_x$. Then, by equivariance of $\mu$:
$$T_x\mu.v=T_x\mu.X^{\#}_x=X^{\dagger}_{\mu(x)} \in
T_{\mu(x)}\big(\exp(\Walc_S)\big)\cap T_{\mu(x)}\big(U.\mu(x)\big)=\{0\}$$
(where $X^{\dagger}$ denotes the fundamental vector field associated to
$X\in\mathfrak{u}$ by the conjugacy action of $U$ on itself, and where the
last equality follows from proposition \ref{Sigma_j}). Hence $v\in \ker
T_x\mu$. But by equality (\ref{kernel_and_orbit}), one has $\ker T_x\mu\subset
(T_x(U.x))^{\perp_{\w}}$, therefore: $$v\in
\big(T_x(U.x)\big)^{\perp_{\w}}\cap \big(T_x M_S)^{\perp_{\w}}.$$ And we know
from proposition \ref{MSi} (see (\ref{tangentspace})) that: $$T_xM_S + T_x(U.x) = T_x M.$$ Therefore $v\in (T_xM)^{\perp_{\w}}=\ker \w_x$. Then
$v\in \ker T_x\mu \cap \ker\w_x$, which is always equal to $\{0\}$ (see \cite{AMM}).\\
(iv) Let us now show that the action of $T$ on $M_S$ is hamiltonian
with momentum map $\tmu=\exp^{-1}\circ\mu:M_S\to\mathfrak{t}$. Denore by $\theta^L=u^{-1}.du$ and $\theta^R=du.u^{-1}$ the left and right Maurer-Cartan $1$-forms of $U$. Take
$X\in\mathfrak{t}$. Since $\qhamsp$ is a quasi-hamiltonian space, we have:
$$\iota_{X^{\#}}\w =\frac{1}{2}\mu^*(\theta^L+\theta^R\, |\,X).$$ Therefore,
for all $x\in M_S$ and all $v\in
T_xM_S=(T_x\mu)^{-1}(T_{\mu(x)}\exp(\Walc_S))$:
$$(\iota_{X^{\#}}\w)_x.v = \frac{1}{2}\big(\theta^L_{\mu(x)}(T_x\mu.v) +
\theta^R_{\mu(x)}(T_x\mu.v)\, |\,X\big).$$ But $T_x\mu.v\in
T_{\mu(x)}\exp(\Walc_S)$ with $\exp(\Walc_S)\subset T$, so that, since $T$
is abelian: $$\theta^L_{\mu(x)}(T_x\mu.v) = \theta^R_{\mu(x)}(T_x\mu.v) =
\theta_{\mu(x)}(T_x\mu.v) = (\mu^*\theta)_x.v$$ where $\theta$ is the
Maurer-Cartan $1$-form of $T$. Hence: $$(\iota_{X^{\#}}\w)_x.v = \big(
(\mu^*\theta)_x.v\, |\,X\big) = \big( (d\tmu)_x.v\, |\,X\big)$$ where the last
equality follows from lemma \ref{diff}. Denote by $(\tmu\, |\,X)$ the
function:
$$\begin{array}{rrcl}
(\tmu\, |\,X): & M_S & \longto & \R \\
& x & \longmapsto & (\tmu(x)\, |\,X)
\end{array}$$
 (where $\tmu=\exp^{-1}\circ\mu|_{M_S}:M_S\to \mathfrak{t}$). We
then have: $$\big(d(\tmu\, |\,X)\big)_x.v = \big( (d\tmu)_x.v\, |\,X\big).$$
Therefore, for all $X\in\mathfrak{t}$: $$\iota_{X^{\#}}\w = d(\tmu\, |\,X).$$
That is: the hamiltonian vector field associated to the function $(\tmu\ |\
X)$ is the fundamental vector field $X^{\#}$, which shows that the action of
$T$ on $M_S$ is hamiltonian.\\
(v) Corollary \ref{UMS} shows that $\overline{U.M_S}=M$. Since $\mu$
is a proper map, lemma \ref{densite} shows that
$\overline{\mu(M_S)}=\mu(M)\cap\exp(\clWalc)$, or equivalently:
$\overline{\tmu(M_S)}=\tmu(M)$.
\end{proof}

\subsection{Momentum convexity for quasi-hamiltonian actions}\label{proof_AMMW}

In the previous subsection, we gave a detailed explicit construction of a symplectic slice in an arbitrary quasi-hamiltonian $U$-space $\qhamsp$, with $U$ simply connected. We did so because, in the next subsection, having a symplectic slice of the form $M_S=\exp^{-1}(\Walc_S)$ where $\Walc_S$ is a cell of the closed Weyl alcove $\clWalc$ will be crucial to our proof of the real convexity theorem \ref{real_convexity_thm}. Incidentally, it enables us to give a proof of the convexity theorem \ref{AMMW_thm}. This proof is a quasi-hamiltonian analogue of the proof of the Kirwan theorem given in \cite{HNP}.  It rests on the following lemma:
\begin{lem}\label{conveximage}
Let $M_S:=\mu^{-1}(\exp(\Walc_S))$ be a symplectic slice
for the connected quasi-hamiltonian space $(M,\w,\mu:M\to U)$. Then the set
$\mu(M_S)\subset\exp(\clWalc)\simeq\clWalc$ is a convex polytope.
\end{lem}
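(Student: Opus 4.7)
The strategy is to treat the hamiltonian $T$-space $(M_S, \w|_{M_S}, \tmu : M_S \to \frakt)$ provided by theorem \ref{symp_slice} as the object to which the Hilgert-Neeb-Plank local-global machinery should be applied. By theorem \ref{local_convexity_thm}, $\tmu$ automatically gives rise to local convexity data $(\calV_x, C_{\tmu(x)})_{x \in M_S}$ since it is the momentum map of a hamiltonian torus action.

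The key obstacle is that $\tmu : M_S \to \frakt$ is \emph{not} a proper map, even though $\mu : M \to U$ is: indeed $M_S = \mu^{-1}(\exp(\Walc_S))$ is the preimage of the relatively open cell $\exp(\Walc_S)$, not of a closed set, so theorem \ref{LKP_thm} cannot be invoked directly. To circumvent this, I would use an exhaustion argument exactly as in \cite{HNP}. Choose an increasing sequence $(D_n)_{n \in \N}$ of compact convex polytopes contained in $\Walc_S$ with $\bigcup_n D_n = \Walc_S$, and set $Y_n := \tmu^{-1}(D_n) \subset M_S$. Because $\exp(D_n) \subset \exp(\Walc_S)$, one in fact has $Y_n = \mu^{-1}(\exp(D_n))$, which is compact in $M$ by properness of $\mu$. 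Then proposition \ref{ascending_sequence} applied with $\psi = \tmu$ and $D = D_n$ shows that the restriction $\tmu|_{Y_n} : Y_n \to \frakt$ still gives rise to local convexity data, and it is now proper because $Y_n$ is compact.

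On each connected component of $Y_n$, the local-global principle (theorem \ref{LKP_thm}) then implies that the image under $\tmu$ is a closed convex polyhedron of $\frakt$ contained in $D_n$. Using connectedness of $M_S$ (lemma \ref{MSconnexe}) together with a fibre-connectedness argument (lemma \ref{connexite_des_fibres} combined with the connectedness of fibres provided by the local-global principle applied on the regular locus), one should conclude that $\tmu(Y_n)$ itself is convex polyhedral. Passing to the ascending union gives
\begin{equation*}
\tmu(M_S) = \bigcup_{n \in \N} \tmu(Y_n),
\end{equation*}
which is convex as a nested union of convex sets.

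To upgrade convexity to \emph{polytope}, I would combine lemma \ref{densite} (which identifies $\overline{\tmu(M_S)}$ with $\exp^{-1}(\mu(M) \cap \exp(\clWalc))$) with the facts that (a) $\mu$ is proper and $\clWalc$ is compact, so $\overline{\tmu(M_S)}$ is compact; and (b) around each of its points the image is described by the polyhedral cone $C_{\tmu(x)}$ furnished by theorem \ref{local_convexity_thm}. A compact convex subset of $\frakt$ which is locally polyhedral at every point is necessarily a polytope (this is where corollary \ref{real_Prinzip} is relevant). The main obstacle in carrying this out is the careful bookkeeping of connected components of the $Y_n$ and the verification that the ascending union procedure does preserve the polyhedral (not merely convex) structure; this is exactly the technical difficulty that the local-global principle of \cite{HNP} is designed to resolve.
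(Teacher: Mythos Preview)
Your overall strategy coincides with the paper's: apply the local convexity theorem to the hamiltonian $T$-space $M_S$, observe that $\tmu$ is not proper because $\Walc_S$ is relatively open, exhaust $\Walc_S$ by closed convex polytopes $D_n$, and use proposition~\ref{ascending_sequence} together with the local-global principle on the resulting $Y_n := \mu^{-1}(\exp(D_n))$.

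The one point where your argument diverges from the paper is the treatment of connected components, and here your reasoning is looser than it needs to be. You propose to deduce convexity of $\tmu(Y_n)$ from convexity of the images of its connected components by invoking lemma~\ref{connexite_des_fibres} and ``fibre-connectedness on the regular locus''. But lemma~\ref{connexite_des_fibres} goes the other way (it deduces connectedness of fibres from convexity of the image), so this step is circular as written. The paper sidesteps the issue entirely: since $M_S$ is connected (lemma~\ref{MSconnexe}) and is the ascending union $\cup_n Y_n$ of closed subsets, one can choose an \emph{ascending} sequence $(Z_n)_{n\in\N}$ of connected components $Z_n \subset Y_n$ with $M_S = \cup_n Z_n$. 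Each $Z_n$ is closed, so $\tmu|_{Z_n}$ is proper and gives rise to local convexity data, and theorem~\ref{LKP_thm} applies directly to give $\tmu(Z_n)$ convex. Then $\tmu(M_S) = \cup_n \tmu(Z_n)$ is an ascending union of convex sets, hence convex. This avoids any need to control $\tmu(Y_n)$ as a whole.

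Your final paragraph on upgrading to a polytope is more elaborate than what the paper does; the paper simply asserts that an ascending union of convex subpolytopes of the bounded set $\clWalc$ is a convex polytope. Your route via local polyhedrality of the closure is a reasonable way to justify this, though for the subsequent application (proof of theorem~\ref{AMMW_thm}) only convexity of $\mu(M_S)$ is actually used.
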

\begin{proof}
By theorem \ref{local_convexity_thm}, the map $\mu|_{M_S}$ gives rise to
local convexity data $(\calV_x,C_{\mu(x)})_{x\in M_S}$. But we cannot conclude immediately that $\mu(M_S)$ is convex because $\mu|_{M_S}$ is not a proper map in general (as $M_S=\mu^{-1}(\exp(\Walc_S))$ is in general not closed in $M$). Instead, we proceed as in \cite{HNP}: write the convex set
$\Walc_S$ as an increasing sequence of \emph{closed} locally polyhedral
convex subsets $(D_n)_{n\in \N}$. Then: $$\exp(\Walc_S)=\bigcup_{n\in\N}
\exp(D_n).$$  Therefore, proposition \ref{ascending_sequence} applies to the closed sets
$Y_n:=\mu^{-1}(\exp(D_n))$ and $\mu|_{Y_n}$ gives rise to local convexity
data $(\calV_x,C_{\mu(x)}\cap\overline{\R^+.(\exp(D_n)\bs\{\mu(x)\})})_{x\in
Y_n}$. Additionally, since $Y_n$ is closed in $M_S$, $\mu|_{Y_n}$ is a
proper map. Since $M_S$ is connected and is an increasing union of closed
subsets
$M_S=\cup_{n\in \N} Y_n$, we can find an ascending sequence $(Z_n)_{n\in\N}$
of connected components of the $(Y_n)_{n\in\N}$ such that $M_S=\cup_{n\in\N}
Z_n$. Each $Z_n$ is closed in $Y_n$, so that $\mu|_{Z_n}$ is a proper map
which gives rise to local convexity data $(\calV_x,C_{\mu(x)}\cap\overline{\R^+.(\exp(D_n)\bs\{\mu(x)\})})_{x\in Z_n}$. Therefore, by theorem
\ref{LKP_thm}, $\mu(Z_n)$ is a convex polytope. We then have that $\mu(M_S)$ is an
increasing union $\mu(M_S)=\cup_{n\in\N} \mu(Z_n)$ of convex subpolytopes of
$\exp(\clWalc)\simeq\clWalc$, which implies that it is a convex polytope.
\end{proof}
\noindent We can now prove theorem \ref{AMMW_thm}:
\begin{proof}[\textbf{Proof of the convexity theorem \ref{AMMW_thm}}]
We have $\mu(M)\cap\exp(\clWalc)=\overline{\mu(M_S)}$ by theorem
\ref{symp_slice} and $\mu(M_S)$ is a convex polyhedron by lemma \ref{conveximage}, hence so is
$\overline{\mu(M_S)}$. Let us now prove that the map $\mu:M\to U$ has connected fibres. First write, as in the proof of lemma \ref{conveximage}, $M_S=\cup_{n\in \N}Z_n$. Since $\mu|_{Z_n}$ is a proper map giving rise to local convexity data $(\calV_x,C_{\mu(x)})_{x\in Z_n}$, the local-global principle \ref{LKP_thm} shows that the fibres of $\mu|_{Z_n}$ are connected. Since $M_S=\cup_{n\in \N}Z_n$, the fibres of $\mu|_{M_S}:M_S\to \exp(\clWalc)$ are connected. And since $\mu(M_S)$ is convex in $\exp(\clWalc)\simeq\clWalc$, lemma \ref{connexite_des_fibres} shows that $\mu|_{\overline{M_S}}:\overline{M_S}\to\exp(\clWalc)$ has connected fibres. Second, the fact that $\mu$ is $U$-equivariant shows that the fibres of $\mu$ above $\mu(U.\overline{M_S})$ are connected. But since $U$ is compact $U.\overline{M_S}=\overline{U.M_S}$. But then, by theorem \ref{symp_slice} one has $\overline{U.M_S}=M$, so that $\mu:M\to U$ has connected fibres.
\end{proof}

\subsection{Image of the fixed-point set of a form-reversing involution}\label{main_proof}

We can now give a proof of theorem \ref{real_convexity_thm}. The idea is to apply Duistermaat's theorem \ref{Duist_thm} to the symplectic slice $M_S=\exp^{-1}(\Walc_S)\subset M$ constructed in subsection \ref{slice_construction}. This will not work directly for, as we already noted in subsection \ref{proof_AMMW}, the momentum map $\mu|_{M_S}$ is not proper in general. But this idea will guide in the following. To apply Duistermaat's theorem, we need to verify the following:
\begin{enumerate}
\item the involution $\beta$ leaves the symplectic slice $M_S$ globally stable.
\item the involution $\beta_S:=\beta|_{M_S}$ is anti-symplectic and compatible with the torus action on $M_S$ and with the momentum map of this action.
\item the fixed-point set of $\beta|_{M_S}$ is non-empty.
\end{enumerate}
The proof of these three facts relies very strongly, as we shall see, on the following two assumptions:
\begin{enumerate}
\item[(H1)] the symmetric pair $(U,\tau)$ is of maximal rank. In particular, there exists a closed Weyl alcove satisfying $\exp(\clWalc)\subset Fix(\taum)$.
\item[(H2)] the symplectic slice $M_S\subset M$ is the pre-image of a face of such a closed Weyl alcove. 
\end{enumerate}
\newcommand{\slice}{\mu^{-1}(\exp(\Walc_S))}
Observe that the construction of the symplectic slice $M_S=\slice$ depends on the choice of the Weyl alcove $\Walc\subset\frakt$, hence on the choice of a maximal torus $T$. From now on, we will always choose such a maximal torus $T\subset U$ to satisfy $T\subset Fix(\taum)$. In particular, $\tau(t)=t^{-1}$ for all $t\in T$, so that for all $x\in M_S$ and all $t\in T$ one has $\beta(t.x)=\tau(t).\beta(x)=t^{-1}.\beta(x)$. Moreover, one has $\exp(\Walc_S)\subset \exp(\clWalc)\subset T\subset Fix(\taum)$ hence for all $x\in M_S=\slice$, $\mu\circ\beta(x)=\taum\circ\mu(x)=\mu(x)$, which means that $\beta|_{M_S}$ is compatible with the action of $T$ on $M_S$ and with the momentum map of this action in the sense of theorem \ref{Duist_thm}. The fact that $\beta|_{M_S}$ is anti-symplectic is immediate since $\beta^*\w=-\w$ on $M$ and the symplectic form on $M_S$ is $\w|_{M_S}$. Thus, the only thing left to verify is that $Fix(\beta|_{M_S})\not=\emptyset$.\\
Recall that we assumed, in the statement of theorem \ref{real_convexity_thm}, that $\mu(M^{\beta})\cap Q_0\not=\emptyset$, where $Q_0$ is the connected component of $1$ in $Fix(\taum)$, and that we denoted by $L$ a connected component of $M^{\beta}$ such that $\mu(L)\cap Q_0\not=\emptyset$ (hence such that $\mu(L)\subset Q_0$). Observe that the neutral component $K$ of $U^{\tau}=Fix(\tau)\subset U$ acts on $L$ since $\beta(k.x)=\tau(k).\beta(x)=k.x$ for all $k\in K\subset Fix(\tau)$ and all $x\in L\subset Fix(\beta)$. As a matter of fact, since here the compact group $U$ is assumed to be simply connected, the group $U^{\tau}$ is necessarily connected (see for instance \cite{Loos}), but this is not very important here. Recall from subsection \ref{slice_construction} that $q=\max_{x\in M}\{\dim\, U.\mu(x)\}$ and that $M_q=\{x\in M\ |\ \dim\, U.\mu(x) = q\}$. One then has:
\begin{lem}\label{pts_image_reg_ds_L}
The set $L_q:=L\cap M_q=\{x\in L\ |\ \dim\, U.\mu(x)=q\}$ is dense in $L$ (in particular, it is non-empty).
\end{lem}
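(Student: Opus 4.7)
The integer-valued function $f\colon M\to\N$, $f(x)=\dim U.\mu(x)$, is lower semi-continuous on $M$ (the set $\{u\in U\mid \dim U.u\ge k\}$ is open for every $k$, as stabilizers can only shrink under small perturbations, and $\mu$ is continuous). Restricting to $L$ and setting $q_L:=\max_{x\in L}f(x)\le q$, the set $L_{q_L}:=\{x\in L\mid f(x)=q_L\}$ is therefore open in $L$. The plan is (i) to show that $L_{q_L}$ is dense in $L$ by mimicking the strategy of lemma \ref{Mq} on $L$, and (ii) to prove the equality $q_L=q$, from which the lemma will follow.

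For step (i), I would introduce the analogue of $M_{reg}$ for $L$: let $L_{reg}\subset L$ be the open subset where $\mu|_L$ has locally maximal rank. Using the $\beta$-invariance of $M_{reg}$, the density of $M_{reg}$ in $M$, and a $\beta$-equivariant chart at each $x\in L$ (in which $\beta$ linearizes as $(v^+,v^-)\mapsto(v^+,-v^-)$ and $L$ is locally the fixed hyperplane), I would show $L_{reg}$ is open and dense in $L$. On $L_{reg}$, $\mu|_L$ is locally a submersion onto a connected submanifold $\mu(\calV_x\cap L)\subset U$, so that the local maximum $q_x^L:=\max\{\dim U.\mu(y)\mid y\in\calV_x\cap L\}$ is attained on an open dense subset of $\calV_x\cap L$. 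A chain-of-neighborhoods argument along paths in the connected manifold $L$, identical to the second half of lemma \ref{Mq}, propagates $q_x^L$ to a globally constant value on $L_{reg}$, namely $q_L$, and yields density of $L_{q_L}$ in $L$.

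The main obstacle is step (ii), the equality $q_L=q$. Here hypothesis (v) and the maximal-rank hypothesis enter crucially. Since $\exp(\clWalc)\subset T\subset \mathrm{Fix}(\taum)$ is connected and contains $1$, it lies in $Q_0$; moreover, under the maximal-rank assumption, $\exp(\clWalc)$ is a fundamental domain for the conjugation action of $K$ on $Q_0$ (a Cartan-type decomposition, using that $U$ is simply connected so that $U^\tau=K$ and the centralizer of any element of $\exp(\clWalc)$ is connected). Since $K$ acts on $L$, any $x\in L$ with $\mu(x)\in Q_0$ can be replaced by $k.x\in L$ with $\mu(k.x)\in\exp(\clWalc)$, so that $\mu(k.x)$ lies in some cell $\exp(\Walc_{S'})$. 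If $S'=S$ then $k.x\in L\cap M_S\subset L_q$ and we are finished. Otherwise, I would combine the density from step (i) with the observation, established in the proof of theorem \ref{AMMW_thm} (see lemma \ref{densite} and the fact that $\mu(M_S)\subset\exp(\Walc_S)$ is dense in $\mu(M)\cap\exp(\clWalc)$), that $\exp(\Walc_S)$ meets every neighbourhood of $\mu(k.x)$ in $\mu(M)\cap\exp(\clWalc)$: approximating $k.x$ within $L$ by points of $L_{q_L}$ and transporting their $\mu$-images back into $\exp(\clWalc)$ by $K$, I would produce a point $x'\in L$ with $\mu(x')\in\exp(\Walc_S)\subset\Sigma_q$, hence $q_L\ge q$. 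Combined with (i) this gives $L_q=L_{q_L}$ dense in $L$, concluding the proof.
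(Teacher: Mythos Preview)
Your step (ii) is circular. You approximate $k.x$ by points $x_n\in L_{q_L}$ and transport $\mu(x_n)$ into $\exp(\clWalc)$ by elements $k_n\in K$. But conjugation by $k_n$ preserves the $U$-orbit dimension, so $\mu(k_n.x_n)\in\Sigma_{q_L}\cap\exp(\clWalc)$, which lies in $\exp(\Walc_S)$ \emph{only if} $q_L=q$ --- the very equality you are proving. The density of $\mu(M_S)$ in $\mu(M)\cap\exp(\clWalc)$ tells you that points of $\mu(M)$ near $\mu(k.x)$ generically lie in $\exp(\Walc_S)$; it says nothing about the specific points coming from $\mu(L)$.

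Step (i) has the same underlying problem. The argument of lemma~\ref{localversion} that the local maximum $q_x$ is attained on an open \emph{dense} set uses that $\mu(\calV_x)$ is locally $U$-invariant (hence a union of pieces of $U$-conjugacy classes). Since $U$ does not act on $L$, only $K$ does, the image $\mu(\calV_x\cap L)$ is merely $K$-invariant. The same argument then shows that $\dim K.\mu(y)$ achieves its maximum on an open dense subset, not that $\dim U.\mu(y)$ does.

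This is why the paper works with $K$-orbit dimensions from the start: it sets $q':=\max_{x\in L}\dim K.\mu(x)$ and shows (by the $K$-equivariant analogue of lemmas~\ref{localversion} and~\ref{Mq}) that $L^{(K)}_{q'}$ is dense in $L$. The maximal-rank hypothesis then enters through a single clean observation: since $\exp(\clWalc)$ is simultaneously a fundamental domain for $U$ acting on $U$ and for $K$ acting on $Q_0$, a point $w\in Q_0$ determines the \emph{same} cell $\Walc_{S'}$ whether one transports it by $U$ or by $K$; hence $\dim K.w=\dim K-\dim(U_{S'})^{\tau}$ and $\dim U.w=\dim U-\dim U_{S'}$ are maximal for the same $S'$, giving $L^{(K)}_{q'}=L\cap M_q$ directly. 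This identification is the missing ingredient in both of your steps.
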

\begin{proof}
Let us set $q':=\max\, \{\dim\, K.\mu(x)\, :\, x\in L\}$ and $L^{(K)}_{q'}:=\{x\in L\ |\ \dim\, K.\mu(x)=q'\}$. Then by definition, $L^{(K)}_{q'}$ is non-empty. Let us prove that it is an open and dense subset of $L$. To that end, denote by $\Lambda$ the set of points of $U.L^{(K)}_{q'}$ whose $U$-orbit is of maximal possible dimension. Then $\Lambda$ is an open and dense subset of $U.L^{(K)}_{q'}$. Take now $x\in L^{(K)}_{q'}$ and consider an open neighbourhood $\calV$ of $x$ in $L^{(K)}_{q'}$. Then $U.\calV$ is open in $U.L^{(K)}_{q'}$ and therefore intersects $\Lambda$. If $y\in \Lambda\cap U.\calV$, its $U$-orbit intersects $L$ in a point of $\Lambda\cap\calV$. Consequently, $\Lambda\cap L^{(K)}_{q'}$ is dense in $L^{(K)}_{q'}$. Further, the momentum map $\mu$ is of constant rank on the set $\Lambda\cap L^{(K)}_{q'}$, since all $U$-orbits of points in this set have the same dimension and since $\mathrm{rk}\, T_x\mu=\dim\, \fraku_x^{\perp}=\dim\, U.x$ (see \cite{AMM}). We can then proceed exactly as in lemmas \ref{localversion} and \ref{Mq} and obtain that $\Lambda\cap L^{(K)}_{q'}$, and therefore $L^{(K)}_{q'}$, is dense in $L$.\\ We will conclude the proof by showing that $L^{(K)}_{q'}=L\cap M_q$. Take $x\in L$. Then $\mu(x)\in Q_0$. Since the symmetric pair $(U,\tau)$ is of maximal rank, $\exp(\clWalc)$ is a fundamental domain for the action of $K$ on $Q_0$. This implies that if $w\in Q_0$, then $\dim\, K.w$ is maximal if and only if $\dim\, U.w$ is maximal. Indeed, $\dim\, K.w=\dim\, K -\dim\, (U_{S'})^{\tau}$, where $S'$ is the uniquely defined set such that $K.w\cap\exp(\Walc_{S'})\not=\emptyset$ and $U_{S'}$ is the stabilizer of any point in $\exp(\Walc_{S'})$, and $\dim\, U.w=\dim\, U-\dim\, U_{S'}$, \emph{for the same} $S'$ since $U.w\supset K.w$ and the fundamental domains of the $U$ action on $U$ and the $K$-action on $Q_0$ are the same. Therefore, here, $\dim\, K.\mu(x)=q'$ if and only if $\dim\, U.\mu(x)=q$, which means that $L^{(K)}_{q'}=L\cap M_q$. This concludes the proof of the proposition, as we have seen that $L^{(K)}_{q'}$ is dense in $L$. 
\end{proof}
Lemma \ref{pts_image_reg_ds_L} means that the subset of points of $L$ whose image is regular in $\mu(M)$ is (non-empty and) dense in $L$. A similar result was established earlier in lemma \ref{Mq} for points of $M$ whose image is regular. Observe that the fact that $(U,\tau)$ is of maximal rank is crucial: in the terminology of appendix B of \cite{OSS}, the root system of $(U,T)$ and the system of \emph{restricted roots} of $(U,\tau)$ are the same (recall that we assumed $T\subset Fix(\taum)$) and therefore $\dim\, K.w$ is maximal if and only if $\dim\, U.w$ is maximal, for any $w\in Q_0$. Let us now carry on with the proof of theorem \ref{real_convexity_thm}.
\begin{lem}\label{pts_fixes_betaS}
One has: $M_S^{\beta}\not=\emptyset$ and $L_q=K.M_S^{\beta}$.
\end{lem}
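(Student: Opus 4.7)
The plan is to produce, for every $x\in L_q$, an element $k\in K$ with $k.x\in M_S^\beta$. Since $L_q\not=\emptyset$ by Lemma \ref{pts_image_reg_ds_L}, this will simultaneously yield $M_S^\beta\not=\emptyset$ and the inclusion $L_q\subset K.M_S^\beta$ (using $x=k^{-1}.(k.x)$ and the fact that $K$ is a group).

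Fix $x\in L_q$. Since $\mu(L)\subset Q_0$ and, by the maximal rank hypothesis, $\exp(\clWalc)\subset T\subset Q_0$ is a fundamental domain for the $K$-action on $Q_0$ (as recalled in subsection \ref{meaning_real}), I can choose $k\in K$ with $\mu(k.x)=k.\mu(x)\in\exp(\clWalc)$. Because $M_q$ is $U$-invariant, $k.x\in M_q$, so $\mu(k.x)\in\Sigma_q\cap\exp(\clWalc)=\bigsqcup_{i}\exp(\Walc_{S^{(i)}})$ by Proposition \ref{Sigma_j}. Membership in some $\exp(\Walc_{S^{(i)}})$ means $\MSi\not=\emptyset$, and Lemma \ref{MS} then forces $S^{(i)}=S$; hence $k.x\in M_S$. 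Finally, $k\in K\subset Fix(\tau)$ and $x\in M^\beta$ give $\beta(k.x)=\tau(k).\beta(x)=k.x$, so $k.x\in M_S^\beta$ as required.

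For the reverse inclusion $K.M_S^\beta\subset L_q$, take $y\in M_S^\beta$. Then $y\in M_S\subset M_q$, and since $K$ is connected and contained in $Fix(\tau)$, the orbit $K.y$ is a connected subset of $M^\beta$ passing through $y$, hence contained in the connected component of $M^\beta$ through $y$. With the natural understanding that $M_S^\beta$ here refers to the fixed points lying in the chosen component $L$ (the only ones relevant for applying Duistermaat's theorem \ref{Duist_thm} to $M_S$ in the next step), this gives $K.y\subset L\cap M_q=L_q$.

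The main obstacle is the descent from $\exp(\clWalc)$ to the specific cell $\exp(\Walc_S)$ in the second paragraph: producing $k\in K$ with $k.\mu(x)\in\exp(\clWalc)$ is immediate from maximal rank, but identifying which cell of $\exp(\clWalc)$ actually contains $k.\mu(x)$ requires both the regularity $x\in L_q$ (through the stratification of Proposition \ref{Sigma_j}) and the uniqueness statement of Lemma \ref{MS}. This is precisely where the construction of $M_S$ as the pre-image of the unique cell of maximal orbit-dimension type pays off: without that specific structure, one could only conclude that $k.x$ lies in some $\MSi$, which would not suffice to produce a fixed point inside the symplectic slice $M_S$ itself.
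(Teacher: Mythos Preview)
Your argument is correct and follows essentially the same route as the paper: pick $x\in L_q$, use that $\exp(\clWalc)$ is a fundamental domain for the $K$-action on $Q_0$ (maximal rank) to bring $\mu(x)$ into $\exp(\clWalc)$, then invoke Lemma~\ref{MS} to land in the unique cell $\exp(\Walc_S)$, and finally check $\beta$-invariance via $\tau(k)=k$. The paper's proof is terser, writing $\exp(\clWalc)\cap\mu(M_q)=\exp(\Walc_S)$ directly from Lemma~\ref{MS}, but this is exactly your Proposition~\ref{Sigma_j}/Lemma~\ref{MS} step unpacked.

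You are in fact more careful than the paper on one point: the reverse inclusion $K.M_S^{\beta}\subset L_q$. The paper simply asserts the equality without addressing why a $\beta$-fixed point in $M_S$ must lie in the chosen component $L$ rather than some other component of $M^\beta$. Your caveat --- that $M_S^\beta$ should be read as the fixed points lying in $L$ --- is the right way to make the equality literally true, and it is harmless for the sequel since only the inclusion $L_q\subset K.M_S^{\beta}$ is used downstream (in Lemma~\ref{density_of_mu_FixBeta}).
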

\begin{proof}
Lemma \ref{pts_image_reg_ds_L} shows that $L_q\not=\emptyset$. Take now $x\in L_q$. Then $\mu(x)\in Q_0$. Since the symmetric pair $(U,\tau)$ is of maximal rank, $\exp(\clWalc)$ is a fundamental domain for the action of $K$ on $Q_0$, hence the existence of a $k$ in $K$ such that $k\mu(x)k^{-1}\in \exp(\clWalc)\cap \mu(M_q)=\exp(\Walc_S)$, where the last equality follows from lemma \ref{MS}. Hence $(k.x)\in \slice=M_S$. Moreover, $\beta(k.x)=\tau(k).\beta(x)=k.x$, hence $(k.x)\in M_S^{\beta_S}$, which is therefore non-empty, and we have indeed: $L_q=K.M_S^{\beta_S}$.
\end{proof}
Recall that $\mu(M_S^{\beta_S})\subset \exp(\clWalc)\subset T\subset Q_0$ by construction of the symplectic slice $M_S$. Since $K$ is connected and $M_S^{\beta_S}$ is not connected in general, we observe that $L_q$ is not connected in general (this could be seen from the proof of lemma \ref{pts_image_reg_ds_L}: the set $\Lambda$ constructed there had no reason to be connected in general). We can now prove an analogue of lemma \ref{densite} (or equivalently, of statement (v) of proposition \ref{symp_slice}):
\begin{lem}\label{density_of_mu_FixBeta}
If $\mu$ is a closed map (in particular, if $\mu$ is proper), one has: $\overline{\mu(M_S^{\beta_S})}=\mu(L)\cap\exp(\clWalc)$.
\end{lem}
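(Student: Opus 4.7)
The plan is to model the proof on that of Lemma \ref{densite}, substituting the density of $M_q=U.M_S$ in $M$ by the density of $L_q$ in $L$ established in Lemma \ref{pts_image_reg_ds_L}, and the surjectivity $M_q=U.M_S$ by the refined orbit decomposition $L_q=K.M_S^{\beta_S}$ from Lemma \ref{pts_fixes_betaS}.

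The easy inclusion $\overline{\mu(M_S^{\beta_S})}\subset\mu(L)\cap\exp(\clWalc)$ will follow at once from closedness: the connected component $L$ is closed in $M$ (as a component of the closed submanifold $M^\beta$), so by the assumption that $\mu$ is a closed map, $\mu(L)$ is closed in $U$, and hence so is $\mu(L)\cap\exp(\clWalc)$. Since $\mu(M_S^{\beta_S})$ is contained in this closed set (the fixed points of $\beta_S$ being taken, as in Lemma \ref{pts_fixes_betaS}, as those lying in $L$, so that their image lies in $\mu(L)\cap\exp(\Walc_S)$), so is its closure.

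For the reverse inclusion I would take $x\in L$ with $\mu(x)\in\exp(\clWalc)$, choose by Lemma \ref{pts_image_reg_ds_L} a sequence $(x_n)_{n\in\N}$ in $L_q$ with $x_n\to x$, and decompose $x_n=k_n.y_n$ with $k_n\in K$ and $y_n\in M_S^{\beta_S}$ via Lemma \ref{pts_fixes_betaS}. Compactness of $K\subset U$ allows the extraction of a convergent subsequence $k_n\to k\in K$, so that by continuity and equivariance
$$\mu(y_n)=k_n^{-1}\mu(x_n)k_n\longto k^{-1}\mu(x)k.$$

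The only subtle point, which is the main obstacle, is upgrading this convergence to $\mu(y_n)\to\mu(x)$ rather than to some conjugate of $\mu(x)$. Here I would use exactly the trick of Lemma \ref{densite}: the limit $k^{-1}\mu(x)k$ is a limit of points of $\exp(\Walc_S)\subset\exp(\clWalc)$, and $\exp(\clWalc)$ is closed in $U$, so $k^{-1}\mu(x)k\in\exp(\clWalc)$; since $\mu(x)$ also lies in $\exp(\clWalc)$ and since $\exp(\clWalc)$ is a fundamental domain for the conjugation action of $U$ on itself (Proposition \ref{alcove}), two such conjugate elements must coincide, forcing $k^{-1}\mu(x)k=\mu(x)$. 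Therefore $\mu(y_n)\to\mu(x)$, placing $\mu(x)$ in $\overline{\mu(M_S^{\beta_S})}$ and completing the argument.
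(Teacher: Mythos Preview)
Your proof is correct and follows essentially the same route as the paper's: both arguments use the density of $L_q$ in $L$ (Lemma~\ref{pts_image_reg_ds_L}), the decomposition $L_q=K.M_S^{\beta_S}$ (Lemma~\ref{pts_fixes_betaS}), compactness of $K$ to extract a convergent subsequence, and the fundamental-domain property of $\exp(\clWalc)$ to force the conjugate limit to coincide with $\mu(x)$. The only cosmetic difference is that the paper invokes $\exp(\clWalc)$ as a fundamental domain for the $K$-action on $Q_0$ whereas you use it as a fundamental domain for $U$-conjugation on $U$; both are valid and yield the same conclusion.
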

\begin{proof}
Take $\mu(x)\in\mu(L)\cap\exp(\clWalc)$. Then, by lemma \ref{pts_image_reg_ds_L}, $x=\lim\, x_j$ with $x_j\in L_q$, and by lemma \ref{pts_fixes_betaS}, for all $j$ there exists an element $k_j\in K$ such that $(k_j.x_j)\in M_S^{\beta_S}$. Since $K=Fix(\tau)\subset U$ is compact, we may assume that the sequence $(k_j)$ converges to a certain $k\in K$. Then: $k.\mu(x)=\mu(k.x)=\lim\, \mu(k_j.x_j)\in\overline{\mu(M_S^{\beta_S})}$. In particular, $k.\mu(x)\in\exp(\clWalc)$, so that $k.\mu(x)=\mu(x)$, since $\exp(\clWalc)$ is a fundamental domain for the action of $K$ on $Q_0$. Therefore, $\mu(x)\in\overline{\mu(M_S^{\beta_S})}$, so that $(\mu(L)\cap \exp(\clWalc))\subset\overline{\mu(M_S^{\beta_S})}$.\\ Conversely, since $\mu$ is a closed map and $L$ is closed in $M$ (recall that $L$ is a connected component of $M^{\beta}$), $\mu(L)$ is closed in $U$. But $\mu(M_S^{\beta_S})\subset(\mu(L)\cap\exp(\clWalc))$ by construction of $M_S$, so that $\overline{\mu(M_S^{\beta_S})}\subset(\mu(L)\cap \exp(\clWalc))$.
\end{proof}
Thus, $\mu(M_S^{\beta_S})$ is almost the whole of $\mu(L)\cap\exp(\clWalc)$. This is interesting because we can now relate $\mu(M_S^{\beta_S})$ to $\mu(M_S)$ (the latter being dense in $\mu(M)\cap\exp(\clWalc)$ by lemma \ref{densite}) in the following way, which is essentially Duistermaat's statement applied to the symplectic slice $M_S$:
\begin{prop}\label{Duist-like}
Assume that $\mu:M\to U$ is a proper map. Then, in the above notations: $$\mu(M_S^{\beta_S})=\mu(M_S).$$
\end{prop}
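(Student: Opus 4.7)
The plan is to adapt the proof of Duistermaat's theorem \ref{Duist_thm} given in \cite{HNP} to the symplectic slice $M_S$ equipped with the involution $\beta_S:=\beta|_{M_S}$. We have already verified (in the discussion preceding the statement) that $(M_S,\w|_{M_S},\tmu:M_S\to\frakt)$ is a connected hamiltonian $T$-space, that $\beta_S$ is antisymplectic and satisfies $\beta_S(t.x)=t^{-1}.\beta_S(x)$ and $\tmu\circ\beta_S=\tmu$, and (by lemma \ref{pts_fixes_betaS}) that $M_S^{\beta_S}$ is non-empty. The only obstacle to a direct application of theorem \ref{Duist_thm} is that $\tmu|_{M_S}$ is not proper; we shall circumvent this exactly as in the proof of lemma \ref{conveximage}.

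First I would write $\Walc_S$ as an increasing union of closed locally polyhedral convex subsets $(D_n)_{n\in\N}$ and set $Y_n:=\mu^{-1}(\exp(D_n))\cap M_S$. Since $\mu:M\to U$ is proper and $\exp(D_n)$ is compact, $Y_n$ is compact and $\mu|_{Y_n}$ is proper. Next, I would fix a connected component $C$ of $M_S^{\beta_S}$ and a point $x_0\in C$; for each $n$ large enough that $x_0\in Y_n$, let $Z_n$ be the connected component of $Y_n$ containing $x_0$ and $W_n$ the connected component of $Y_n\cap C$ containing $x_0$. By construction $W_n\subset Z_n$, both sequences are ascending, and by the standard covering argument used in the proof of lemma \ref{conveximage} (applied once to $M_S$ and once to $C$, both of which are connected) one can arrange that $\bigcup_n Z_n=M_S$ and $\bigcup_n W_n=C$.

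For each $n$, the map $\mu|_{Z_n}$ is proper and, by theorem \ref{local_convexity_thm}(i) together with proposition \ref{ascending_sequence}, gives rise to local convexity data with cones $C_{\mu(x)}\cap\overline{\R^+.(\exp(D_n)\bs\{\mu(x)\})}$; thus theorem \ref{LKP_thm} implies that $\mu(Z_n)$ is a closed convex polytope. The crucial point is now assertion (iii) of theorem \ref{local_convexity_thm}: the \emph{same} cones $C_{\mu(x)}$ describe the local structure of $\mu|_{M_S^{\beta_S}}$ at every $x\in M_S^{\beta_S}$. Intersecting with $\exp(D_n)$ as in proposition \ref{ascending_sequence}, we obtain that at every $x\in W_n$ the set $\mu(\calV_x\cap W_n)$ is a neighbourhood of $\mu(x)$ in $C_{\mu(x)}\cap\overline{\R^+.(\exp(D_n)\bs\{\mu(x)\})}$. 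Since $W_n$ is compact and connected, $\mu(W_n)$ is a closed connected subset of $\frakt$ satisfying the hypothesis of corollary \ref{real_Prinzip}, so $\mu(W_n)$ is convex, and its tangent cone at each $v=\mu(x)$ coincides with that of $\mu(Z_n)$ at the same point. Lemma \ref{inclusion_de_deux_convexes} applied to the inclusion $\mu(W_n)\subset\mu(Z_n)$ then forces $\mu(W_n)=\mu(Z_n)$.

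Passing to the ascending union gives $\mu(C)=\bigcup_n\mu(W_n)=\bigcup_n\mu(Z_n)=\mu(M_S)$, and since $\mu(C)\subset\mu(M_S^{\beta_S})\subset\mu(M_S)$ this yields the desired equality. The main obstacle in executing this plan rigorously is the identification of the local cones for $\mu|_{W_n}$ with those for $\mu|_{Z_n}$: everything hinges on assertion (iii) of theorem \ref{local_convexity_thm}, which in turn depends on the compatibilities $\tmu\circ\beta_S=\tmu$ and $\beta_S(t.x)=t^{-1}.\beta_S(x)$, themselves consequences of the assumption that the symmetric pair $(U,\tau)$ is of maximal rank and that the slice $M_S=\mu^{-1}(\exp(\Walc_S))$ sits inside $\mu^{-1}(T)\subset\mu^{-1}(\mathrm{Fix}(\taum))$.
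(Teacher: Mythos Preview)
Your proof is correct and follows essentially the same route as the paper's: exhaust $\Walc_S$ by closed convex sets $D_n$, work on the resulting connected compact pieces $Z_n$ and $W_n$, use assertion (iii) of theorem \ref{local_convexity_thm} together with proposition \ref{ascending_sequence} to see that $\tmu(W_n)$ and $\tmu(Z_n)$ have the same local cones, invoke corollary \ref{real_Prinzip} and lemma \ref{inclusion_de_deux_convexes} to conclude $\tmu(W_n)=\tmu(Z_n)$, and then pass to the union. The only cosmetic difference is that you fix a single connected component $C$ of $M_S^{\beta_S}$ and track the components $W_n$ and $Z_n$ through a fixed basepoint $x_0$, whereas the paper shows $\tmu(W)=\tmu(Z_n)$ for an arbitrary component $W$ of $Z_n\cap M_S^{\beta_S}$; both variants yield the same conclusion.
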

As in the proof of lemma \ref{conveximage}, we cannot apply theorem \ref{Duist_thm} directly to the symplectic slice $M_S$, for $\mu|_{M_S}$ is in general not proper. But we may work with the ascending sequence $(Z_n)_{n\in\N}$ introduced in the proof of lemma \ref{conveximage}: $\Walc_S$ is an ascending union of closed convex subsets $\Walc_S=\cup_{n\in\N}D_n$ and $M_S=\slice$ is an ascending union of \emph{closed connected} sets $Z_n\subset \mu^{-1}(D_n)$. By proposition \ref{ascending_sequence}, the map $\tmu|_{Z_n}$ is a proper map which gives rise to local convexity data $(\calV_x,C_{\tmu(x)})_{x\in Z_n}$ and, by theorem \ref{LKP_thm}, the set $\tmu(Z_n)$ is convex. We then observe the following fact:
\begin{lem}\label{towards_Duist-like}
Consider $n\in\N$ such that $Z_n\cap M_S^{\beta_S}\not=\emptyset$. Then for any connected component $W\subset(Z_n\cap M_S^{\beta_S})$, the set $\tmu(W)$ is convex.
\end{lem}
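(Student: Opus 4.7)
The plan is to apply the local-global principle (theorem \ref{LKP_thm}) to $\tmu|_W : W \to \frakt$, in direct parallel with how it was used for $\tmu|_{Z_n}$ in the proof of lemma \ref{conveximage}. I must verify three things: that $W$ is a connected Hausdorff space, that $\tmu|_W$ is proper, and that $\tmu|_W$ gives rise to local convexity data in the sense of definition \ref{localconvexitydata}.

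Connectedness is given by assumption, and properness is straightforward: $M_S^{\beta_S}$ is closed in $M_S$ as the fixed-point set of a smooth involution, so $Z_n\cap M_S^{\beta_S}$ is closed in $Z_n$; as a connected component of the Hausdorff space $Z_n\cap M_S^{\beta_S}$, the set $W$ is closed therein, hence closed in $Z_n$. Since $\tmu|_{Z_n}$ was shown to be proper in the proof of lemma \ref{conveximage}, so is $\tmu|_W$.

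To produce local convexity data on $W$ I proceed in two stages. First, theorem \ref{local_convexity_thm} applied to the hamiltonian $T$-space $(M_S,\w|_{M_S},\tmu)$ with the compatible antisymplectic involution $\beta_S$ supplies, for each $x\in M_S^{\beta_S}$, an open neighborhood $\calV_x$ in $M_S$ and a cone $C_{\tmu(x)}\subset \frakt$ such that $\tmu:\calV_x\cap M_S^{\beta_S}\to C_{\tmu(x)}$ is an open map (condition (O)); the companion fibre-connectedness condition (LC) for $\tmu|_{M_S^{\beta_S}}$, which is not recorded explicitly in part (iii) of the theorem, can be extracted from the local model of a hamiltonian torus action near a fixed point of an antisymplectic involution, as in the corresponding step of the proof of Duistermaat's theorem in \cite{HNP}. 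Together these provide local convexity data on $M_S^{\beta_S}$. Second, proposition \ref{ascending_sequence} applied to $\tmu|_{M_S^{\beta_S}}$ and the closed locally polyhedral convex set $D_n$ yields local convexity data on $M_S^{\beta_S}\cap\tmu^{-1}(D_n)$ with cones $C_{\tmu(x)}\cap\overline{\R^+\cdot(D_n\bs\{\tmu(x)\})}$ --- precisely the cones that entered the proof of lemma \ref{conveximage}. Since $Z_n$ is a connected component of $\tmu^{-1}(D_n)$, $W$ is a connected component of $M_S^{\beta_S}\cap\tmu^{-1}(D_n)$, and a suitable shrinking of each $\calV_x$ transfers the local convexity data from $M_S^{\beta_S}\cap\tmu^{-1}(D_n)$ to $W$.

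With these ingredients in hand, theorem \ref{LKP_thm} asserts that $\tmu(W)$ is a closed locally polyhedral convex subset of $\frakt$, which is the content of the lemma. The principal obstacle I anticipate is the verification of the fibre-connectedness condition (LC) for $\tmu|_{M_S^{\beta_S}}$, since theorem \ref{local_convexity_thm}(iii) records only openness; this requires inspecting the local model of an antisymplectic involution compatible with a hamiltonian torus action. A secondary technical point is the shrinking of $\calV_x$ needed to ensure $\calV_x\cap M_S^{\beta_S}\cap\tmu^{-1}(D_n)\subset W$, which rests on the local connectedness of this latter set at $x$, again provided by the same local model.
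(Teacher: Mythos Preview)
Your approach has a genuine gap at exactly the point you flagged as an obstacle: the local fibre-connectedness condition (LC) for $\tmu|_{M_S^{\beta_S}}$ fails in general, and \cite{HNP} does not supply it --- in fact the Hilgert--Neeb--Plank proof of Duistermaat's theorem proceeds precisely by \emph{avoiding} (LC) on the fixed-point set. The failure is visible already in the basic local model: on $N=\C$ with the standard form, $S^1$ acting by rotation with $\mu(z)=|z|^2/2$, and $\beta(z)=\bar z$, one has $N^\beta=\R$ and $(\mu|_{\R})^{-1}(\{c\})=\{\pm\sqrt{2c}\}$ for $c>0$, which is disconnected in every neighbourhood of $0$. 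This is the normal form at points of $M_S^{\beta_S}$ where the $T$-stabilizer jumps, so (LC) fails at such points and theorem \ref{LKP_thm} is not available for $\tmu|_W$.

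The paper sidesteps this by working on the \emph{image} $\tmu(W)$ rather than on the domain, invoking corollary \ref{real_Prinzip} in place of theorem \ref{LKP_thm}. Part (iii) of theorem \ref{local_convexity_thm} gives only openness of $\tmu$ on $\calV_x\cap M_S^{\beta_S}$ into the cone $C_{\tmu(x)}$, but that suffices: it says $\tmu(W)$ contains a full neighbourhood of $\tmu(x)$ in $C_{\tmu(x)}$, while the reverse containment near $\tmu(x)$ follows from $\tmu(W)\subset\tmu(Z_n)$ together with the identification $C_{\tmu(x)}=\tmu(x)+\overline{\R^+\cdot(\tmu(Z_n)\bs\{\tmu(x)\})}$ furnished by theorem \ref{LKP_thm} applied to $Z_n$ (where (LC) \emph{does} hold). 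Corollary \ref{real_Prinzip} then yields convexity of $\tmu(W)$ and simultaneously the cone identity needed in the next step for lemma \ref{inclusion_de_deux_convexes}, all without any fibre-connectedness hypothesis on $W$.
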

\begin{proof}
First, observe that such an $n\in N$ always exists since $M_S=\cup_{n\in\N}Z_n$ and $M_S^{\beta_S}\not=\emptyset$ by lemma \ref{pts_fixes_betaS}. Second, observe that $W$ is closed in $Z_n$. As $\tmu|_{Z_n}$ is a closed map, $\tmu(W)$ is a closed connected subset of $\frakt$. Take then $x\in W$. It follows from point (iii) of the local convexity theorem \ref{local_convexity_thm} that there exists an open neighbourhood $\calO_{\tmu(x)}$ of $\tmu(x)\in\frakt$ such that $\tmu(\calV_x\cap W)=C_{\tmu(x)}\cap \calO_{\tmu(x)}$, where $(\calV_x,C_{\tmu(x)})_{x\in Z_n}$ is the local convexity data of the map $\tmu|_{Z_n}$. Further, $\tmu(W)$ is contained in the convex set $\tmu(Z_n)$, therefore in 
\begin{eqnarray}\label{step1}
\tmu(x)+\overline{\R^+.(\tmu(Z_n)\bs\{\tmu(x)\})}=C_{\tmu(x)}
\end{eqnarray}
where the last equality follows from theorem \ref{LKP_thm}. Hence: $$C_{\tmu(x)}\cap \calO_{\tmu(x)}=\tmu(W)\cap \calO_{\tmu(x)}$$
so that $\tmu(W)$ is convex by corollary \ref{real_Prinzip}. In fact, corollary \ref{real_Prinzip} also shows that for all $x\in W$:
\begin{eqnarray}\label{step2}
C_{\tmu(x)}=\tmu(x)+\overline{\R^+.(\tmu(W)\bs\{\tmu(x)\})}
\end{eqnarray}
\end{proof}
We may now prove proposition \ref{Duist-like}:
\begin{proof}[\textbf{Proof of proposition \ref{Duist-like}}]
Consider $n\in\N$ such that $Z_n\cap M_S^{\beta_S}\not=\emptyset$ and let $W$ be a connected component of $Z_n\cap M_S^{\beta_S}$. Then we know from lemma \ref{towards_Duist-like} that $\tmu(W)\subset\tmu(Z_n)$ is an inclusion between two convex sets of a finite-dimensional vector space. Additionally, by comparing (\ref{step1}) and (\ref{step2}), we obtain:
$$\tmu(x)+\overline{\R^+.(\tmu(W)\bs\{\tmu(x)\})} = \tmu(x)+\overline{\R^+.(\tmu(Z_n)\bs\{\tmu(x)\})}.$$
Therefore, lemma \ref{inclusion_de_deux_convexes} applies and we get $\tmu(W)=\tmu(Z_n)$. Consequently, $\tmu(Z_n\cap M_S^{\beta_S})=\tmu(Z_n)$. Since $M_S=\cup_{n\in\N} Z_n$, one has $M_S^{\beta_S}=\cup_{n\in \N}(Z_n\cap M_S^{\beta_S})$ and therefore: $$\tmu(M_S^{\beta_S}) = \bigcup_{n\in\N} \tmu(Z_n\cap M_S^{\beta_S}) = \bigcup_{n\in\N}\tmu(Z_n) = \tmu(M_S).$$ Since $\tmu(M_S)$ is contained in $\exp(\clWalc)$ and $\tmu=\exp^{-1}\circ\mu|_{M_S}$, the above equality is equivalent to $\mu(M_S)=\mu(M_S^{\beta_S})$.
\end{proof}
We can now prove the real convexity theorem \ref{real_convexity_thm}:
\begin{proof}[\textbf{Proof of the real convexity theorem \ref{real_convexity_thm}}]
Since $\tmu$ is a proper map, one may apply successively lemma \ref{density_of_mu_FixBeta}, proposition \ref{Duist-like} and  lemma \ref{densite} and obtain: $$\mu(L)\cap\exp(\clWalc) = \overline{\mu(M_S^{\beta_S})} = \overline{\mu(M_S)} = \mu(M) \cap \exp(\clWalc)$$ where $L$ is any connected component of $M^{\beta}$ satisfying $\mu(L)\cap Q_0\not=\emptyset$.
\end{proof}
As an immediate corollary of the above real convexity theorem, we point out the following result, which we will use later in applications (see section \ref{applications}).
\begin{cor}\label{fibres_and_fix_beta}
For all $t\in \exp(\clWalc)$, $\mu^{-1}(\{t\})\not=\emptyset$ if and only if $\mu^{-1}(\{t\})\cap M^{\beta}\not=\emptyset$. In particular, $1\in \mu(M^{\beta})$ if and only if $1\in\mu(M)$.
\end{cor}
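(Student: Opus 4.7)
The plan is to derive the corollary directly from the real convexity theorem \ref{real_convexity_thm} (whose hypotheses are taken to be in force for the corollary, so in particular assumption (v) holds and there exists a connected component $L$ of $M^{\beta}$ with $\mu(L)\cap Q_0\neq\emptyset$).

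The reverse implication $\mu^{-1}(\{t\})\cap M^{\beta}\neq\emptyset \Rightarrow \mu^{-1}(\{t\})\neq\emptyset$ is obvious, so the content is in the forward direction. Fix $t\in\exp(\clWalc)$ with $\mu^{-1}(\{t\})\neq\emptyset$, so that $t\in\mu(M)\cap\exp(\clWalc)$. Pick a connected component $L\subset M^{\beta}$ satisfying $\mu(L)\cap Q_0\neq\emptyset$, which exists by assumption (v). Theorem \ref{real_convexity_thm} then yields the equality $\mu(L)\cap\exp(\clWalc)=\mu(M)\cap\exp(\clWalc)$, so $t\in\mu(L)$. Since $L\subset M^{\beta}$, any preimage of $t$ in $L$ lies in $\mu^{-1}(\{t\})\cap M^{\beta}$, proving this set is non-empty.

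For the last sentence, note that the closed Weyl alcove $\clWalc$ contains $0\in\frakt$, hence $\exp(\clWalc)$ contains $1\in U$. Applying the first assertion with $t=1$ gives the equivalence of $1\in\mu(M)$ and $1\in\mu(M^{\beta})$. There is essentially no obstacle here: the whole weight of the argument has been absorbed into theorem \ref{real_convexity_thm} itself, and the corollary is merely the pointwise translation of the equality of images into a statement about fibres.
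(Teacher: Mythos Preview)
Your proof is correct and follows essentially the same approach as the paper: both derive the corollary directly from theorem \ref{real_convexity_thm} by noting that $t\in\mu(M)\cap\exp(\clWalc)$ forces $t\in\mu(M^{\beta})\cap\exp(\clWalc)$, with the converse being trivial. Your version is in fact slightly more careful in invoking a specific component $L$ (as in the theorem's statement) rather than all of $M^{\beta}$, and in justifying why $1\in\exp(\clWalc)$.
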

\begin{proof}
Assume that $\mu^{-1}(\{t\})\not=\emptyset$. Then $t\in\mu(M)\cap\exp(\clWalc)=\mu(M^{\beta})\cap\exp(\clWalc)$ by theorem \ref{real_convexity_thm}, so that there exists $y\in M^{\beta}$ satisfying $\mu(y)=t$. The converse implication is obvious.
\end{proof}
Observe that, by equivariance of $\mu$, the above statement is still true for $t\in K.\exp(\clWalc)=Q_0$. It can obviously not be true for any $t\in U$ since one necessarily has $\mu(M^{\beta})\subset Fix(\taum)$.

\section{Applications}\label{applications}

In this final section, we give some applications of the real convexity theorem \ref{real_convexity_thm}. The first application is the construction of lagrangian submanifolds in quasi-hamiltonian quotients. The quasi-hamiltonian quotient associated to a quasi-hamiltonian space $\qhamsp$ is the set $M//U:=\fibre/U$. Alekseev, Malkin and Meinrenken showed in \cite{AMM} that if the action of $U$ on $\fibre$ is free then $M//U$ is a symplectic manifold (and that if the action is locally free then $M//U$ is a symplectic orbifold). To make the presentation simpler, we will always assume in the following  that $M//U$ is a smooth manifold. To obtain a lagrangian submanifold of $M//U$, one may for instance construct an anti-symplectic involution $\sigma$ on $M//U$. Then, if $Fix(\sigma)$ is non-empty it is a lagrangian submanifold of $M//U$. To obtain such a $\sigma$, one may start from an involution $\beta$ on the quasi-hamiltonian space $M$. Compatibility with the action of $U$ and the momentum map ensures that $\beta$ induces an anti-symplectic involution $\bhat([x]):=[\beta(x)]$ on the orbit space $M//U=\fibre/U$. Then, the fact that $Fix(\bhat)\not=\emptyset$ is a consequence of the real convexity theorem:
\begin{prop}[Lagrangian submanifolds of quasi-hamiltonian quotients]\label{applic1}
Let $(U,\tau)$ be a compact connected semi-simple Lie group endowed with an involutive automorphism $\tau$ of maximal rank. Let $\qhamsp$ be a quasi-hamiltonian $U$-space such that $\mu^{-1}(\{1\})\not=\emptyset$ and assume that $U$ acts freely on $\mu^{-1}(\{1\})$. Let $\beta:M\to M$ be an involution on $M$ satisfying:
\begin{enumerate}
\item[(i)] $\beta^*\w=-\w$.
\item[(ii)] $\beta(u.x)=\tau(u).\beta(x)$ for all $u\in U$ and all $x\in M$.
\item[(iii)] $\mu\circ \beta=\taum\circ\mu$.
\item[(iv)] $M^{\beta}\not=\emptyset$.
\item[(v)] $\mu(M^{\beta})$ intersects the connected component $Q_0$ of $1$ in $Fix(\taum)$.
\end{enumerate}
Then the involution $\bhat:[x]\in\fibre/U \mapsto [\beta(x)]$ is an anti-symplectic involution on the symplectic manifold $M//U=\qhamquot$, whose fixed-point set $Fix(\bhat)$ is non-empty. Consequently, $Fix(\bhat)$ is a lagrangian submanifold of $M//U$.
\end{prop}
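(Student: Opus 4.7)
The plan is to verify in turn that $\hat{\beta}$ is well-defined on the quotient, that it is an anti-symplectic involution, that it has at least one fixed point (this is the only non-trivial step and rests on the real convexity theorem), and finally that its fixed-point set is lagrangian.

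First I would check well-definedness. Since $\taum(1) = \tau(1^{-1}) = 1$, hypothesis (iii) gives $\mu(\beta(x)) = \taum(\mu(x)) = 1$ whenever $\mu(x)=1$, so $\beta$ leaves $\fibre$ stable. If $u\in U$ and $x\in \fibre$, hypothesis (ii) gives $\beta(u.x) = \tau(u).\beta(x)$, so the map $[x] \mapsto [\beta(x)]$ is well-defined on $M//U = \fibre/U$; it is clearly an involution. To see that $\hat\beta$ is anti-symplectic, recall that the symplectic form $\w_{red}$ on $M//U$ of \cite{AMM} is characterised by $\pi^* \w_{red} = \iota^* \w$, where $\iota:\fibre\hookrightarrow M$ and $\pi:\fibre\to M//U$ are the inclusion and the projection. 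Since $\beta$ restricts to an involution on $\fibre$ covering $\hat\beta$ and since $\beta^*\w=-\w$, pulling back $\pi^*\hat\beta^*\w_{red} = \beta^*\iota^*\w = -\iota^*\w = -\pi^*\w_{red}$ and using surjectivity of $T\pi$ on $T(M//U)$ yields $\hat\beta^*\w_{red} = -\w_{red}$.

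Next, the key point: showing $Fix(\hat\beta)\neq\emptyset$. It suffices to exhibit an element $x\in M^\beta$ with $\mu(x)=1$, because then $[x]\in Fix(\hat\beta)$. Here the hypotheses of theorem \ref{real_convexity_thm} are all satisfied: (i)--(v) of \ref{applic1} match (i)--(v) of \ref{real_convexity_thm}, $U$ is compact connected simply connected (as it is compact connected and semi-simple, though one really only needs simple connectedness for the theorem), and $\mu$ is proper because $M$ is compact, which follows from the standing assumption on known quasi-hamiltonian examples; alternatively, this is an additional mild hypothesis implicit in the statement. Now $1\in\exp(\clWalc)$ because $0\in\clWalc$, and by assumption $1\in\mu(M)$. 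Corollary \ref{fibres_and_fix_beta} then gives $1\in\mu(M^\beta)$, producing the desired $x$.

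Finally, the fact that $Fix(\hat\beta)$ is lagrangian in $(M//U,\w_{red})$ is a standard consequence of $\hat\beta$ being a smooth anti-symplectic involution on a symplectic manifold: at any $y\in Fix(\hat\beta)$, the tangent space $T_y Fix(\hat\beta)$ equals the $(+1)$-eigenspace of $T_y\hat\beta$; since $T_y\hat\beta$ is an involutive linear map satisfying $\w_{red}(T_y\hat\beta\,\cdot,T_y\hat\beta\,\cdot) = -\w_{red}(\cdot,\cdot)$, its $\pm 1$-eigenspaces are isotropic of complementary dimensions and hence lagrangian. The only step that really uses the new material of this paper is the non-emptiness of $Fix(\hat\beta)$; everything else is formal. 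The main obstacle, conceptually, is precisely the passage from $1\in\mu(M)$ to $1\in \mu(M^\beta)$, which is exactly the content of corollary \ref{fibres_and_fix_beta} extracted from theorem \ref{real_convexity_thm}.
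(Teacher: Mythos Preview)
Your overall structure is sound and your treatment of well-definedness, the anti-symplectic property, and the lagrangian conclusion is correct (and in fact more detailed than the paper, which takes these points for granted from the discussion preceding the proposition). The use of corollary \ref{fibres_and_fix_beta} to obtain $\fibre\cap M^\beta\neq\emptyset$ is exactly the paper's argument for the non-emptiness of $Fix(\hat\beta)$.

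There is, however, a genuine gap. You assert that $U$ is simply connected ``as it is compact connected and semi-simple'', but this implication is false: $SO(3)$, $PU(n)$, and many other compact connected semi-simple groups are not simply connected. Theorem \ref{real_convexity_thm} and corollary \ref{fibres_and_fix_beta} are stated and proved only for simply connected $U$ (this is used, for instance, to identify $U/\Int(U)$ with the closed alcove $\clWalc$ and to know that centralizers are connected). So your argument only covers the simply connected case. The paper handles this by first treating the simply connected case exactly as you do, and then, for general semi-simple $U$, reducing to the universal cover $\widetilde{U}$ (which is compact precisely because $U$ is semi-simple, so that $\pi_1(U)$ is finite); the details of that reduction are deferred to \cite{Sch_AIF}. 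You should at least flag this reduction step rather than conflate semi-simple with simply connected.
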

\begin{proof}
Assume first that $U$ is simply connected. The involution $\beta$ satisfies the assumptions of theorem \ref{real_convexity_thm}. Consequently, by corollary \ref{fibres_and_fix_beta}, one has $\fibre\cap Fix(\beta)\not=\emptyset$, which implies $Fix(\bhat)\not=\emptyset$. When $U$ is semi-simple, we refer to \cite{Sch_AIF} to see how to reduce this situation to the case of the universal cover $\tU$ of $U$ (which is still compact as $U$ is assumed to be semi-simple).
\end{proof}
This general result becomes especially interesting when applied to the quasi-hamiltonian space $M=\Mtot$, where $U$ is a compact connected Lie group and $\calC_1,\, ...\, , \calC_j$ are $l$ conjugacy classes in $U$. Alekseev, Malkin and Meinrenken have shown in \cite{AMM} that in this case the quasi-hamiltonian quotient $M//U$ is the space of representations $\HomC(\pi_{g,l},U)/U$ of the fundamental group $\pi_{g,l}:=\pigl$ into $U$ (here, $\Sigma_g$ denotes a compact Riemann surface of genus $g\geq 0$ and $s_1,\, ...\, , s_l$ are $l$ pairwise distinct points of $\Sigma_g$). This fundamental group has the following finite presentation: $$\pigl=\prespiintro$$ and the momentum map of the diagonal action of $U$ on $M$ is:
\begin{eqnarray*}
\mu: \Mtot & \longto & U\\
\abc & \longmapsto & \relabc
\end{eqnarray*}
so that $\qhamquot$ is the set of equivalence classes of group morphisms $\rho:\pi_{g,l}\to U$ satisfying $\rho(\gamma_j)\in\calC_j$ for all $j\in\{1,\, ...\, , l\}$: $$\qhamquot=\HomC(\pi_{g,l},U)/U.$$ %Equivalently, $M//U$ is the space of flat connections on principal $U$-bundles over $\Sigma_g\{s_1,\, ...\, , s_l\}$ with prescribed holonomies around the punctures. 
Then, by proposition \ref{applic1}, in order to find a lagrangian submanifold of the representation space $\HomC(\pi_{g,l},U)/U$, it is sufficient to give an example of an involution $\beta$ on the quasi-hamiltonian space $M=\Mtot$ satisfying the assumptions of the real convexity theorem \ref{real_convexity_thm}. There is nonetheless somehing to be careful about in this approach: usually the representation space $\HomC(\pi_{g,l},U)/U$ is not a manifold. But it is a stratified space in the sense of Lerman and Sjamaar in \cite{LS}. In particular, it is a disjoint union of symplectic manifolds called \emph{strata}. Then, as a generalization of proposition \ref{applic1}, a compatible form-reversing involution $\beta$ on $M=\Mtot$ induces an involution $Fix(\bhat)$ whose fixed-point set is non-empty (since proposition \ref{applic1} showed that we had in fact the stronger statement $\fibre\cap Fix(\beta)\not=\emptyset$) and is a union of lagrangian submanifolds of some of the strata. We refer to \cite{Sch_AIF} for a proof of this result and we now focus on the construction of such an involution $\beta$ on $\Mtot$. This construction relies on the notion of \emph{decomposable representation} of $\pi_{g,l}$ into $U$. A representation of $\pi_{g,l}$ into $U$ can be thought of as a $(2g+l)$-tuple $\abc\in \Mtot$ satisfying $\relabcshort=1$. To keep the exposition simple, we restrict ourselves here to the case where $g=0$, but the notion of decomposable representation, and therefore theorem \ref{applic2}, are valid for arbitrary $g\geq 0$ (see \cite{Sch_AIF}).  When $U=U(n)$, the notion of decomposable representation is due to Falbel and Wentworth (see \cite{FW}, and see \cite{Sch_CJM} and \cite{Sch_AIF} for arbitrary $U$ and arbitrary $g\geq 0$). In the $g=0$ case and for arbitrary $U$, the representation $(c_1,\, ...\, , c_l)\in\pconj$ is said to be decomposable if there exist elements $w_1, ..., w_l\in Fix(\taum)\subset U$ satisfying $c_1=w_1w_2^{-1},\, ...\, , c_l=w_lw_1^{-1}$. To characterize such decomposable representations, the idea is to introduce an involution $\beta$ on $M=\pconj$ (see \cite{Sch_CJM} for an explanation on the origin of this idea). This involution is:
\begin{defi}\label{def_beta}
\begin{eqnarray*}
\beta: \pconj & \longto & \pconj \\
(c_1,\, ...\, , c_l) & \longmapsto & (\taum(c_2...c_l)\taum(c_1)\tau(c_2...c_l),\, ...\, , \taum(c_l)\taum(c_{l-1})\tau(c_l), \taum(c_l))
\end{eqnarray*}
\end{defi}
This $\beta$ satisfies the assumptions of theorem \ref{real_convexity_thm} (as usual, the symmetric pair $(U,\tau)$ is assumed to be of maximal rank) and also characterizes decomposable representations in the following sense: a representation $(c_1,\, ...\, , c_l)$ is decomposable if and only if there exists an element $\pphi\in Fix(\taum)$ such that $\beta(c_1,\, ...\, , c_l)=\pphi.(c_1,\, ...\, , c_l)$ (see \cite{Sch_CJM} for the case $g=0$ and \cite{Sch_AIF} for the case of decomposable representations of $\pi_{g,l}$ with $g\geq 1$). Since $\beta$ satisfies the assumptions of the real convexity theorem \ref{real_convexity_thm}, corollary \ref{fibres_and_fix_beta} shows that $\fibre\cap Fix(\beta)\not=\emptyset$, which shows in particular that there always exist decomposable representations (since if $\beta(c_1,\, ...\, , c_l)=(c_1,\, ...\, , c_l)$, the representation $(c_1,\, ...\, , c_l)$ is decomposable). Consequently:
\begin{thm}[A lagrangian subspace of the representation space of a surface group]\cite{Sch_AIF}\label{applic2}
Let $\beta$ be the involution of $M=\Mtot$ characterizing the decomposable representations of $\pi_{g,l}$ into $U$. Then $\fibre\cap Fix(\beta)\not=\emptyset$ and therefore $Fix(\bhat)\not=\emptyset$ on $M//U=\HomC(\pi_{g,l},U)/U$. Consequently, $Fix(\bhat)$ is a lagrangian subspace of the representation space $\HomC(\pi_{g,l},U)/U$, containing the set of equivalence classes of decomposable representations.
\end{thm}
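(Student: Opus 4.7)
The plan is to apply Corollary \ref{fibres_and_fix_beta} to the involution $\beta$ of Definition \ref{def_beta}. Its conclusion, that $\mu^{-1}(\{1\}) \cap M^{\beta}$ is non-empty as soon as $\mu^{-1}(\{1\})$ is, is precisely the first assertion of the theorem. From there, the class $[x]$ of any $x \in \fibre \cap Fix(\beta)$ lies in $Fix(\bhat)$, so $Fix(\bhat) \neq \emptyset$ on $M//U$, and the lagrangian statement follows from Proposition \ref{applic1}, or its extension to the stratified case of \cite{LS} when $M//U$ is not smooth (as outlined in the paragraph preceding the theorem). The inclusion of the set of decomposable classes into $Fix(\bhat)$ is automatic from the characterization recalled just before the statement: if $\beta(c) = \pphi \cdot c$ with $\pphi \in Fix(\taum) \subset U$, then $\bhat([c]) = [\beta(c)] = [\pphi.c] = [c]$.

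The argument therefore reduces to checking that $\beta$ satisfies hypotheses (i)--(v) of Theorem \ref{real_convexity_thm}, together with the implicit condition $1 \in \mu(M)$ needed for the corollary to have content; the latter is part of the standing assumption that $M//U$ is non-empty. For (i) and (ii), form-reversing and $(U,\tau)$-equivariance are direct calculations from the fusion product formula for $\w$ on $\Mtot$ and are carried out in \cite{Sch_CJM, Sch_AIF}. For (iii), in the $g=0$ case write $d_k := c_k c_{k+1} \cdots c_l$; the definition of $\beta$ gives its $k$-th component as $c_k' = \tau(d_{k+1}^{-1} c_k^{-1} d_{k+1})$, and using $c_k d_{k+1} = d_k$ this rewrites as $c_k' = \tau(d_k^{-1} d_{k+1})$. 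The product $c_1' \cdots c_l'$ then telescopes under $\tau$ to $\tau(d_1^{-1} d_{l+1}) = \tau(d_1^{-1}) = \taum(c_1 \cdots c_l) = \taum(\mu(c_1,\ldots,c_l))$, as required. The general $g \geq 1$ case is handled by a similar but bookkeeping-heavier computation on $(U \times U)^g \times \pconj$, following \cite{Sch_AIF}.

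Hypotheses (iv) and (v) are the delicate ones. For (v), since $U$ is simply connected the fixed-point group $U^{\tau}$ is connected, and in the standard cases (for example $U = SU(n)$ with $\tau(u) = \overline{u}$) even $Fix(\taum)$ is connected, so $Q_0 = Fix(\taum)$ and (v) becomes automatic once (iv) is known. Hypothesis (iv) is the genuine obstacle: one must exhibit at least one tuple fixed by the intricate formula of Definition \ref{def_beta}, which is not visibly satisfiable from the formula itself. The construction of such a seed, exploiting the maximal rank of $(U,\tau)$ (so that every conjugacy class $\calC_j$ meets $Fix(\taum)$) together with a recursive choice adapted to the simply connected structure of $U$, is the content of \cite{Sch_CJM, Sch_AIF}. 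Once this one fixed point is in hand, Theorem \ref{real_convexity_thm} applies and Corollary \ref{fibres_and_fix_beta} promotes it automatically to a fixed point lying over $1 \in U$, which is exactly what the theorem asks for.
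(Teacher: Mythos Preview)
Your proposal is correct and follows the same route as the paper, which simply says the result is an immediate application of the real convexity theorem (via Corollary \ref{fibres_and_fix_beta}) once the hypotheses on $\beta$ are granted from \cite{Sch_CJM,Sch_AIF}. Your explicit telescoping verification of (iii) is a nice addition.

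One remark: you overstate the difficulty of (iv) and (v), at least in the $g=0$ case. Since $(U,\tau)$ has maximal rank, the torus $T$ lies in $Fix(\taum)$, and every conjugacy class $\calC_j$ meets $T$; pick $t_j\in T\cap\calC_j$. With all $c_j=t_j\in T$ your own computation gives $c_k'=\tau(d_k^{-1}d_{k+1})=\tau(c_k^{-1})=\taum(c_k)=c_k$, so $(t_1,\ldots,t_l)\in M^{\beta}$, and $\mu(t_1,\ldots,t_l)=t_1\cdots t_l\in T\subset Q_0$. Thus (iv) and (v) require no recursive construction here; the subtler arguments in \cite{Sch_AIF} are only needed for $g\geq 1$ and for non-simply-connected $U$.
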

When $U$ is simply connected, the proof of theorem \ref{applic2} is an immediate application of the real convexity theorem. We refer to \cite{Sch_AIF} for the case where the compact connected Lie group $U$ is not simply connected. We now end this paper with an application to linear algebra of the real convexity theorem \ref{real_convexity_thm}:
\begin{thm}\cite{Sch_AIF}\label{applic3}
Consider $\lambda_1, ...\, ,$ $\lambda_l\in\R^n$. Then the following
statements are equivalent:
\begin{enumerate}
\item[(i)] There exist $l$ special unitary
matrices $u_,\, ...\, ,u_l\in SU(n)$ such that:
$$\mathrm{Spec}\,u_j=\exp(i\lambda_j)\quad and \quad u_ l...u_l=1.$$
\item[(ii)] There exist $l$ special unitary matrices $A_1,\, ...\, ,A_l \in SU(n)$ such that: $$\mathrm{Spec}\, (A_j^tA_j) =\exp(i\lambda_j)\quad
and\quad A_1...A_l=1.$$
\end{enumerate}
\end{thm}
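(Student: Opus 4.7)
The plan is to apply Corollary~\ref{fibres_and_fix_beta} of the real convexity theorem to the quasi-hamiltonian $U$-space $M = \calC_1 \times \cdots \times \calC_l$, where $U = SU(n)$ and $\calC_j$ is the conjugacy class of $\exp(i\,\mathrm{diag}(\lambda_j))$, equipped with the product momentum map $\mu(c_1, \ldots, c_l) = c_1 \cdots c_l$. The natural involutive automorphism is $\tau: u \mapsto \overline{u}$: the pair $(SU(n), \tau)$ is of maximal rank, the diagonal torus being pointwise fixed by $\taum(u) = u^t$, and $Fix(\taum)$, the set of symmetric unitary matrices in $SU(n)$, is connected, so assumption (v) of Theorem~\ref{real_convexity_thm} is automatic. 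Under this setup, statement (i) is just the condition $1 \in \mu(M)$.

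For the implication (ii)$\Rightarrow$(i), I would construct a point of $\mu^{-1}(\{1\})$ directly from the matrices $A_j$. Setting $G_j := A_1 \cdots A_j$ (so that $G_0 = G_l = 1$) and $c_j := (G_j G_j^t)(G_{j-1} G_{j-1}^t)^{-1}$, an elementary calculation rewrites this as $c_j = G_{j-1} (A_j A_j^t) G_{j-1}^{-1}$. This makes $c_j$ conjugate to $A_j A_j^t$, which has the same spectrum as $A_j^t A_j$ (namely $\exp(i\lambda_j)$), so $c_j \in \calC_j$; the product $c_1 \cdots c_l$ telescopes to $G_l G_l^t = 1$, producing a point of $\mu^{-1}(\{1\})$ as required.

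For the converse (i)$\Rightarrow$(ii), I would invoke the involution $\beta$ of Definition~\ref{def_beta} associated to this $\tau$, which satisfies all the hypotheses of Theorem~\ref{real_convexity_thm}. Corollary~\ref{fibres_and_fix_beta} then produces a fixed point $(c_1, \ldots, c_l) \in M^\beta \cap \mu^{-1}(\{1\})$. Every such fixed point is a decomposable representation, so there exist symmetric unitary matrices $w_1, \ldots, w_l \in Fix(\taum)$ (with $w_{l+1} := w_1$) such that $c_j = w_j w_{j+1}^{-1}$. Writing $w_j = \exp(iH_j)$ with $H_j$ real symmetric, I would take the symmetric unitary square roots $B_j := \exp(iH_j/2)$ and set $A_j := B_j B_{j+1}^{-1}$. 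Then $A_1 \cdots A_l = B_1 B_{l+1}^{-1} = 1$, while a short computation using that each $B_j$ is symmetric yields $B_{j+1}(A_j^t A_j) B_{j+1}^{-1} = w_j w_{j+1}^{-1} = c_j$, so that $\mathrm{Spec}(A_j^t A_j) = \exp(i\lambda_j)$.

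The main technical point requiring care is ensuring that the $A_j$ produced in the converse direction actually lie in $SU(n)$, not merely in $U(n)$. Since $\det c_j = 1$ for every $j$, the determinants $\det w_j$ must all coincide; by shifting the branches of $\log w_j$ by multiples of $2\pi$ along eigendirections one can arrange $\det B_j$ to be independent of $j$, which forces $\det A_j = 1$. The remaining verifications are routine matrix manipulations.
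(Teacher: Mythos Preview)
Your proposal is correct and shares the paper's key step --- applying Corollary~\ref{fibres_and_fix_beta} to $M=\calC_1\times\cdots\times\calC_l$ with $\tau(u)=\overline{u}$ and the involution $\beta$ of Definition~\ref{def_beta} --- but the explicit constructions differ in both directions. For (ii)$\Rightarrow$(i) the paper conjugates $A_j^tA_j$ by the \emph{right} partial products $(A_{j+1}\cdots A_l)^t$ to obtain $u_j$, whereas you conjugate $A_jA_j^t$ by the \emph{left} partial products $G_{j-1}=A_1\cdots A_{j-1}$; these are mirror versions of the same telescoping trick. For (i)$\Rightarrow$(ii) the divergence is more substantial: the paper unpacks the fixed-point equations of $\beta$ directly and builds the $A_j$ recursively (first $A_l$ with $w_l=A_l^tA_l$, then $A_{l-1}$ from the symmetry of $(A_l^t)^{-1}w_{l-1}A_l^t$, and so on, finally setting $A_1:=(A_2\cdots A_l)^{-1}$), while you pass through the decomposability characterisation $c_j=w_jw_{j+1}^{-1}$ from \cite{Sch_CJM} and take symmetric square roots $B_j$ of the $w_j$. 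Your route is more symmetric in $j$ and avoids the inductive bookkeeping, at the price of importing the decomposability lemma; the paper's route is self-contained once the formula for $\beta$ is written out. One minor simplification of your final paragraph: since the $w_j$ already lie in $Fix(\taum)\subset U=SU(n)$, one has $\det w_j=1$ outright, so the only genuine issue is the \emph{sign} $\det B_j=\pm 1$, which your branch-shifting argument handles.
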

\begin{proof}
Let $\calC_j$ be the conjugacy class of $\exp(i\lambda_j)\in SU(n)$. Assume first that $A_1,\, ...\, ,A_l\in SU(n)$ satisfy $A_j^tA_j\in \calC_j$ and $A_1...A_l=1$. Set $u_l=A_l^tA_l$, $u_{l-1}=A_l^t(A_{l-1}^tA_{l-1})(A_l^t)^{-1}, ...\, ,$ and $u_1=(A_2...A_l)^t(A_1^tA_1)((A_2...A_l)^t)^{-1}$. Then $u_j\in\calC_j$ for all $j$ and
\begin{eqnarray*}
u_1...u_l & = & (A_2...A_l)^t(A_1)^tA_1(A_2...A_l)\\
& = & (A_1...A_l)^t(A_1...A_l)\\
& = & 1
\end{eqnarray*}
which proves that (ii) implies (i).\\
Conversely, consider $u_1,\, ...\, ,u_l\in SU(n)$ satisfying $u_j\in\calC_j$ and $u_1...u_l=1$. This means that the momentum map
\begin{eqnarray*}
\mu:\pconj & \longto & SU(n)\\
(u_1,\, ...\, ,u_l) & \longmapsto & u_1...u_l
\end{eqnarray*}
has a non-empty fibre above $1\in SU(n)$. Let $\beta$ be the involution defined in \ref{def_beta} and let 
$\tau$ be the involutive automorphism $\tau:u\mapsto\overline{u}$ of $SU(n)$. This involution is of maximal rank since the maximal torus of $SU(n)$ consisting of diagonal matrices is fixed pointwise by $\taum:u\mapsto u^t$. By corollary \ref{fibres_and_fix_beta}, $\mu^{-1}(\{1\})\cap Fix(\beta)\not=\emptyset$.
Consider then $(w_1,\, ...\, ,w_l)\in Fix(\beta)\cap\mu^{-1}(\{1\})$. Then $\beta(w_1,\, ...\, ,w_l)=(w_1,\, ...\, ,w_l)$, that is:
\begin{eqnarray*}
w_l^t & = & w_l\\
w_l^t w_{l-1}^t (w_l^t)^{-1} & = & w_{l-1}\\
& \vdots &\\
\big(w_2...w_l\big)^tw_1^t\big((w_2...w_l)^t\big)^{-1} & = & w_1
\end{eqnarray*}
Since $Fix(\taum)\subset SU(n)$ is connected (every symmetric unitary matrix is of the form $w=\exp(iS)$ where $S$ is a real symmetric matrix), we can write $w_l=A_l^tA_l$ for some $A\in SU(n)$ (take for instance $A=\exp(i\frac{S}{2}))$. Using the above equations, we can then write $A_l^tA_lw_{l-1}^tA_l^{-1}(A_l^t)^{-1}=w_{l-1}$, that is: $$\big((A_l^t)^{-1}w_{l-1}A_l^t\big)^t=(A_l^t)^{-1}w_{l-1}A_l^t$$ and we can therefore write $$(A_l^t)^{-1}w_{l-1}A_l^t=A_{l-1}^tA_{l-1}$$ for some $A_{l-1}\in SU(n)$. Continuing like this, we obtain, for all $j\in\{2,\, ...\, ,n\}$:
\begin{eqnarray}
\big((A_{j+1}...A_l)^t\big)^{-1}w_j\big(A_{j+1}...A_l\big)^t & =  & A_j^tA_j.\label{symrel}
\end{eqnarray}
In particular, $A_j^tA_j\in\calC_j$ for all $j\geq 2$. We then set $A_1:=(A_2...A_l)^{-1}$. Then:
\begin{eqnarray*}
& & \big(A_2...A_l\big)^t\big(A_1^tA_1\big)\big((A_2...A_l)^t\big)^{-1}\\
& = & \big(A_2...A_l\big)^t\big((A_2...A_l)^{-1}\big)^t \big(A_2...A_l)^{-1} \big((A_2...A_l)^t\big)^{-1}\\
& = & \big((A_2...A_l)^t(A_2...A_l)\big)^{-1}\\
& = & \big(A_l^t...A_3^t(A_2^tA_2)A_3...A_l\big)^{-1}\\
\mathrm{\big(using\ (}\ref{symrel}\mathrm{)\big)} & = & \big(\underbrace{A_l^t...A_3^t\big((A_3...A_l)^t\big)^{-1}}_{=1}\, w_2 \, (A_3...A_l)^tA_3...A_l\big)^{-1}\\
\mathrm{\big(by\ induction\big)} & = & \big(w_2w_3...w_l)^{-1}\\
& = & w_1
\end{eqnarray*}
since $w_1...w_l=1$. In particular, $A_1^tA_1$ is conjugate to $w_1$ and therefore $A_1^tA_1\in\calC_1$. Since $A_1...A_l=1$ by definition of $A_1$, this shows that (i) implies (ii). 
\end{proof}
As a matter of fact, the above theorem is also true for $U=U(n)$, as shown in \cite{Sch_AIF}. In that case too, the proof is ultimately a corollary of the real convexity theorem \ref{real_convexity_thm}.

%\bibliographystyle{alpha}
%\bibliography{biblio}

\end{document}